\numberwithin{figure}{section}
\theoremstyle{plain}
\newtheorem{thm}{\protect\theoremname}
  \theoremstyle{definition}
  \newtheorem{defn}[thm]{\protect\definitionname}
  \theoremstyle{remark}
  \newtheorem{rem}[thm]{\protect\remarkname}
  \theoremstyle{definition}
  \newtheorem{example}[thm]{\protect\examplename}
  \theoremstyle{plain}
  \newtheorem{prop}[thm]{\protect\propositionname}
  \theoremstyle{plain}
  \newtheorem{cor}[thm]{\protect\corollaryname}
  \theoremstyle{plain}
  \newtheorem{lem}[thm]{\protect\lemmaname}
\DeclareFontFamily{OT1}{pzc}{}
\DeclareFontShape{OT1}{pzc}{m}{it}{<-> s * [1.10] pzcmi7t}{}
\DeclareMathAlphabet{\mathpzc}{OT1}{pzc}{m}{it}
\newcommand{\rr}{\mathbb{R}}
\newcommand{\pp}{\mathbb{P}}
\newcommand{\ff}{\mathcal{F}}
\newcommand{\kk}{\mathcal{K}}
\newcommand{\cc}{\mathbb{C}}
\newcommand{\zz}{\mathbb{Z}}
\newcommand{\lc}{\mathcal{L}}
\newcommand{\bb}{\mathcal{B}}
\newcommand{\im}{\mbox{Im}\thinspace}
\newcommand{\cl}{\mathcal{C}}
\newcommand{\bl}{\text{Bl}\thinspace}
\newcommand{\hotimes}{\hat{\otimes}}
\newcommand{\fgvect}{\textbf{FGVect}}
\newcommand{\hfgvect}{\widehat{\fgvect}}
\newcommand{\grading}{(\zz \oplus \zz/2\zz)}
\newcommand{\codim}{\text{cd}\thinspace}
\newcommand{\ls}{\text{ls}\thinspace}
\newcommand{\bilam}{\textbf{BiMod}_\Lambda}
\newcommand{\hbilam}{\widehat{\bilam}}
\newcommand{\tb}{\mathbb{T}}
\newcommand{\ca}{\cc[\vec{\alpha}]}
\newcommand{\mf}{\mathfrak{m}}
\newcommand{\intcone}{\zz_{\geq 0}^{n'}}
\newcommand{\bva}{[\vec{\alpha}]}
\newcommand\mapsfrom{\mathrel{\reflectbox{\ensuremath{\mapsto}}}}
  \providecommand{\corollaryname}{Corollary}
  \providecommand{\definitionname}{Definition}
  \providecommand{\examplename}{Example}
  \providecommand{\lemmaname}{Lemma}
  \providecommand{\propositionname}{Proposition}
  \providecommand{\remarkname}{Remark}
\providecommand{\theoremname}{Theorem}
\begin{document}

\title{Equivariant $A_{\infty}$ Algebras for Nonorientable Lagrangians}

\author{Amitai Zernik}

\maketitle

\begin{abstract}
We set up an algebraic framework for the study of pseudoholomorphic
discs bounding nonorientable Lagrangians, as well as equivariant extensions
of such structures arising from a torus action.

First, we define unital cyclic twisted $A_{\infty}$ algebras and
prove some basic results about them, including a homological perturbation
lemma which allows one to construct minimal models of such algebras.
We then construct an equivariant extension of $A_{\infty}$ algebras
which are invariant under a torus action on the underlying complex.
Finally, we construct a homotopy retraction of the Cartan-Weil complex
to equivariant cohomology, which allows us to construct minimal models
for equivariant cyclic twisted $A_{\infty}$ algebras.

In a forthcoming paper we will use these results to define and obtain
fixed point expressions for the open Gromov-Witten theory of $\rr\pp^{2n'}\hookrightarrow\cc\pp^{2n'}$,
as well as its equivariant extension.
\end{abstract}
\tableofcontents{}

\section{Introduction}

This paper lays the algebraic foundations for our definition of the
equivariant open Gromov-Witten invariants for $\rr\pp^{2n'}$, and
their computation using $A_{\infty}$ fixed-point localization. We
briefly describe the geometric motivation, before outlining the main
results. 

Let $\left(M,\omega\right)$ be a compact symplectic manifold and
$L\overset{i}{\hookrightarrow}M$ be an embedded Lagrangian submanifold.
Such an embedding is\emph{ }called \emph{relatively spin }if $L$
is oriented and $w_{2}\left(TL\right)\in\mbox{\ensuremath{\mathrm{Im}}}\,\left(i^{*}\right)\subset H^{2}\left(L;\zz/2\zz\right)$
(cf. Definition 3.1.1 in \cite{FOOO}). Fukaya \cite{fukayacyclic}
associates a $G$-gapped cyclic filtered $A_{\infty}$ algebra to
every such relatively spin Lagrangian embedding. This is a cyclic
deformation of the differential graded algebra (DGA) of differential
forms on $L$, obtained by taking into account \emph{quantum corrections},
namely, the effects of positive-energy pseudoholomorphic discs in
$M$ bounded by $L$.

The Lagrangian embedding $\rr\pp^{2n'}\hookrightarrow\cc\pp^{2n'}$
is not relatively spin. Indeed, $\rr\pp^{2n'}$ is not even orientable.
On the other hand, Solomon \cite{JT} showed that for $n'=1$ the
moduli spaces associated with pseudoholomorphic discs bounding $\rr\pp^{2}$
can be used to define invariants which are equivalent to Welschinger's
\cite{welschinger} signed counts of real rational planar curves.
One upshot of extending the $A_{\infty}$ formalism to accommodate
non-orientable $L$ such as $\rr\pp^{2n'}$ is that it allows to generalize
the definition of the Solomon-Welschinger invariants to all $n'$.
Roughly speaking, this is because it allows to ``keep tabs'' of
boundary corrections, which for $n'=1$ happen to vanish but in higher
dimensions become significant. This approach for defining invariants
is based on \cite{jake+sara}.

Another feature of the embedding $\rr\pp^{2n'}\hookrightarrow\cc\pp^{2n'}$
is that it is equivariant with respect to an action of the rank $n'$
torus group $\tb$. This motivates one to look for $H^{\bullet}\left(B\tb\right)$-valued
invariants generalizing the classical invariants discussed in the
previous paragraph, and compute them using fixed-point localization.
Here the boundary contributions become significant already for $n'=1$. 

With this in mind, our goal in this paper is to set up an $A_{\infty}$
formalism that will (i) capture the quantum deformations associated
with non-orientable Lagrangians, and (ii) handle equivariant extensions
of $\tb$-invariant $A_{\infty}$ algebras. Let us now explain how
this is carried out.

\sloppy A non-orientable Lagrangian embedding $L\overset{i}{\hookrightarrow}M$
will be called \emph{relatively $Pin^{-}$} if ${w_{2}\left(TL\right)+w_{1}\left(TL\right)^{2}\in\im\left(i^{*}\right)}$
(see \cite{JT}). In this case, the Maslov index of holomorphic discs
may be odd, and one must allow for forms with values in the orientation
local system, which leads to some subtle signs in the computations.
In section \ref{sec:Twisted-A8-algebras} we introduce a generalization
of the notion of a $G$-gapped filtered $A_{\infty}$ algebra which
we call a \emph{twisted $A_{\infty}$ algebra} (see Definition \ref{def:twisted A8 algebra}),
that captures this situation. More precisely, in a forthcoming paper
we will construct a twisted $A_{\infty}$ algebra for the $Pin^{-}$
Lagrangian embedding $\rr\pp^{2m}\hookrightarrow\cc\pp^{2m}$. It
should be possible to construct such an algebra for \emph{any }relatively
$Pin^{-}$ Lagrangian embedding. As usual the \emph{easy-to-check}
Definition \ref{def:twisted A8 algebra} is followed by an equivalent
\emph{easy-to-use }definition of twisted $A_{\infty}$ algebras, as
tame differentials on a certain bar coalgebra, see Proposition \ref{prop:Differentials are twisted A8 algebras}.
Cyclic and unital versions are also discussed.

In section \ref{sec:Homological-algebra} we prove the homological
perturbation lemma for twisted $A_{\infty}$ algebras and discuss
cyclic and unital versions too, see Theorem \ref{thm:homological perturbation lemma}
and Proposition \ref{prop:unital and cyclic HPL}. This is an important
computational tool, which enables one to construct minimal models.
To apply these results a certain retraction is needed, see Definitions
\ref{def:retraction} and \ref{def:cyclic unital retraction}. 

The remainder of the paper is devoted to discussing the equivariant
situation. When a manifold $L$ is equipped with an action of the
rank $n'$ torus group $\tb$, one can construct the \emph{Cartan-Weil
DGA} which is a certain extension of the De Rham $\cc$-DGA of $L$
defined over $H^{\bullet}\left(B\tb\right)=\cc\left[\alpha_{1},...,\alpha_{n'}\right]$.
In Section \ref{sec:Equivariant Cohomology} we give a self-contained
and fairly thorough account of this, including a discussion of the
equivariant angular form (see Definition \ref{def:equivariant angular form}
and Proposition \ref{prop:equiv ang form exists}) and Poincare duality,
Corollary \ref{cor:equivariant poincare duality}.

In Section \ref{sec:Equivariant A8 algebras} we apply the Cartan-Weil
theory to twisted $A_{\infty}$ algebras. We define what it means
for a twisted $A_{\infty}$ algebra to be invariant under an action
of a torus group (Definition \ref{def:invariant twisted A8-1}) and
show that in this case the twisted $A_{\infty}$ algebra admits an
equivariant extension, see Proposition \ref{prop:equivariant twisted extension-1}.
Since DGA's are special cases of twisted $A_{\infty}$ algebras (see
Example \ref{exa:DGA}), we can summarize the situation with the following
commutative square of differential coalgebras.
\[
\xymatrix{\left(\bb_{\cc},d+\wedge\right) & \left(\bb_{\cc\left[\vec{\alpha}\right]}^{CW},D+\wedge\right)\ar[l]_{\vec{\alpha}=0}\\
\left(\bb_{\Lambda_{0}^{G}\left(\cc\right)},m\right)\ar[u]^{\tau=0} & \left(\bb_{\Lambda_{0}^{G}\left(\cc\left[\vec{\alpha}\right]\right)}^{CW},m^{CW}\right)\ar[l]_{\vec{\alpha}=0}\ar[u]^{\tau=0}
}
\]
$\left(\bb_{\cc},d+\wedge\right)$ is the differential bar coalgebra
over $\cc$, corresponding to the De Rham DGA of differential forms
on some manifold $L$ equipped with a torus action. $\left(\bb_{\cc\left[\vec{\alpha}\right]}^{CW},D+\wedge\right)$
is the Cartan-Weil equivariant extension of this algebra over $\cc\left[\vec{\alpha}\right]$.
$\left(\bb_{\Lambda_{0}^{G}\left(\cc\right)},m\right)$ is some twisted
$A_{\infty}$ algebra which is a deformation of $\left(\bb_{\cc},d+\wedge\right)$,
over the Novikov ring $\Lambda_{0}^{G}\left(\cc\right)$. We assume
that this deformation is $\tb$-invariant; for example, $\left(\bb_{\Lambda_{0}^{G}\left(\cc\right)},m\right)$
might be the quantum deformation associated with the Lagrangian embedding
$\rr\pp^{n}\hookrightarrow\cc\pp^{n}$, in which case the algebra
is $\tb$-invariant because the $\tb$-action on $\rr\pp^{n}$ extends
to $\cc\pp^{n}$ and the associated moduli spaces of discs. Anyway,
if $\left(\bb_{\Lambda_{0}^{G}\left(\cc\right)},m\right)$ is invariant
we can construct $\left(\bb_{\Lambda_{0}^{G}\left(\cc\left[\vec{\alpha}\right]\right)}^{CW},m^{CW}\right)$
which extends both $\left(\bb_{\cc\left[\vec{\alpha}\right]}^{CW},D+\wedge\right)$
and $\left(\bb_{\Lambda_{0}^{G}\left(\cc\right)},m\right)$ over $\Lambda_{0}^{G}\left(\cc\left[\vec{\alpha}\right]\right)$.
See (\ref{eq:square of deformations}) for more details about this
square of twisted $A_{\infty}$ algebras.

When $L$ has even cohomology the equivariant cohomology admits a
perfect pairing (see Corollary \ref{cor:perfect equivariant pairing}),
and we show that the equivariant extension of cyclic twisted $A_{\infty}$
algebras is also cyclic in this case. 

Theorem \ref{thm:equivariant cyclic retraction exists} states that
when $L$ has even cohomology, there exists a cyclic unital retraction
of the Cartan-Weil complex to its cohomology. Such a retraction is
needed in order to construct minimal models for equivariant $A_{\infty}$
algebras. The proof involves the construction of a certain homotopy
kernel (see Definition \ref{def:homotopy kernel} and Proposition
\ref{prop:homotopy kernel exists}). It is a kind of equivariant Hodge-De
Rham decomposition for even cohomology manifolds. This construction
is central to the definition of the equivariant invariants of $\rr\pp^{2n'}\hookrightarrow\cc\pp^{2n'}$
and their computation using fixed point localization, which will be
the subject of a forthcoming paper. 

\medskip{}

\noindent \textbf{Acknowledgments. }I would like to thank my adviser
Jake Solomon. He suggested the problem of open fixed-point localization
and that studying the $A_{\infty}$ formalism in \cite{fukayacyclic}
could be a way to solve it. I also learned a lot from countless conversations
we've had. I would like to thank Mohammed Abouzaid and Rahul Pandharipande
for interesting discussions and ideas. Finally, I'd like to thank
Pavel Giterman for his careful reading of the paper and many useful
suggestions.

\subsection{\label{sub:conventions}Some conventions}

When we say an object $C$ is \emph{graded }we mean it is equipped
with a $\grading$-grading. We call the $\zz$-component of the grading
\emph{the codimension degree }and the\emph{ }$\zz/2$ component of
the grading the \emph{local system degree}. We denote by $\codim x$
(resp. $\ls x$) the codimension degree (resp. the local system degree)
of a homogenous element $x$. We will sometimes use the notation $x^{a,b}$
to indicate that $x^{a,b}\in C^{a,b}$ is homogeneous of degree $\deg x=\left(a,b\right)\in\grading$. 

Maps will be grading-preserving, unless stated otherwise. We will
denote by $C\left[p\right]$ the codimension degree shift of $C$
and by $C\left[p,q\right]$ the bidegree shift of $C$, so 
\[
\left(C\left[p\right]\right)^{a,b}:=C^{a+p,b},\mbox{ and }\left(C\left[p,q\right]\right)^{a,b}:=C^{a+p,b+q}.
\]

Tensor products of graded objects are graded in the usual way. 

We let $\fgvect$ denote the category of \emph{filtered graded real
vector spaces}. The filtration on an object $X$ is denoted $\left\{ \ff^{E}X\right\} $;
it is indexed by $E\in\rr_{\geq0}$ and decreasing: if $E_{1}\leq E_{2}$
we have $\ff^{E_{2}}X\subseteq\ff^{E_{1}}X$. The grading is by ${\zz\oplus\zz/2}$
as above. Morphisms in $\fgvect$ are required to preserve the filtration
and the grading.

An object $V$ is called \emph{discrete }if it is equipped with the
\emph{discrete filtration}, which satisfies $\ff^{>0}V:=\bigcup_{E>0}\ff^{E}V=0$. 

Let $V_{1},V_{2}$ be objects of $\fgvect$. The usual graded tensor
product $V_{1}\otimes V_{2}$ is equipped with a filtration
\begin{equation}
\ff^{E}\left(V_{1}\otimes V_{2}\right):=\bigcup_{E_{1}+E_{2}\geq E}\ff^{E_{1}}V_{1}\otimes\ff^{E_{2}}V_{2}.\label{eq:tensor product filtration}
\end{equation}
This turns $\fgvect$ into a monoidal category. Let $\hfgvect$ denote
the full subcategory of $\fgvect$ of objects which are complete with
respect to the filtration; this category is monoidal with respect
to the completed tensor product $\hotimes$. If $\Lambda$ is a monoid
in $\hfgvect$ we denote by $\hbilam$ the category of\emph{ $\Lambda$-bimodules}
in $\hfgvect$. It is a monoidal category with respect to the monoidal
product $-\hotimes_{\Lambda}-$, where the filtration on $V_{1}\otimes_{\Lambda}V_{2}$
is given by the same formula \ref{eq:tensor product filtration},
except the tensor products on both sides are taken over $\Lambda$. 

If $A$ and $B$ are objects of a category $\cl$ we'll denote by
$\cl\left(A,B\right)$ the internal hom object (assuming it exists).

A \emph{local} \emph{system }on a topological space $X$ is a sheaf
$\lc$ which is locally isomorphic to the \emph{trivial }local system,
which is the constant sheaf $\underline{\cc}$. If $E\overset{\pi}{\longrightarrow}X$
is a rank $r$ vector bundle, the \emph{orientation local system of
$E$}, denoted\emph{ }$Or\left(E\right)$, is the sheafification of
$ $$U\mapsto H_{cv}^{r}\left(E|_{U};\cc\right)$ where $H_{cv}$
denotes the compact vertical cohomology (see Bott and Tu \cite{bott+tu}
pg. 61) it is a local system on $X$. If $X$ is a manifold and $TX$
is the associated tangent bundle, $Or\left(TX\right)$ is called simply
\emph{the orientation local system} \emph{of $X$. An orientation
}for $X$ is an isomorphism $Or\left(TX\right)\simeq\underline{\cc}$,
and given an orientation we say $X$ is \emph{oriented}. If no such
orientation exists we say $X$ is \emph{non-orientable.}

\section{\label{sec:Twisted-A8-algebras}Twisted $A_{\infty}$ algebras}

\subsection{Twisted $A_{\infty}$ algebras as collections of maps}

In this section we define twisted $A_{\infty}$ algebras. We follow
the notation and conventions of \cite{fukayacyclic} and introduce
modifications where they are needed. The differences are summarized
in Remark \ref{rem:fukayacyclic is special case}.
\begin{defn}
\label{def:discrete submonoid}(a) We say a subset $G\subset\rr_{\geq0}\times\zz$
is a \emph{submonoid }if it contains $\mathbf{0}=\left(0,0\right)$
and is closed under addition. If $G$ is a submonoid we denote by
$E:G\to\rr_{\geq0}$ and $\mu:G\to\zz$ the projections to each of
the components. 

(b) We say a submonoid $G\subset\rr_{\geq0}\times\zz$ is a \emph{discrete
submonoid }if the following hold. 
\begin{enumerate}
\item The image $E\left(G\right)\subset\rr_{\geq0}$ is discrete.
\item for each $\lambda\in\rr_{\geq0}$ the inverse image $E^{-1}\left(\lambda\right)$
is a finite set. 
\end{enumerate}
\end{defn}

Let $G$ be a discrete submonoid, $R$ a $2\zz\oplus0$-graded unital
commutative algebra over $\rr$. By that we mean that we think of
$R$ as $\grading$-graded, but $R^{a,b}=0$ unless ${\left(a,b\right)\in2\zz\oplus0\subset\zz\oplus\zz/2\zz}$.
Let $C$ be a $\grading$-graded $R$-module; in particular, the structure
maps are required to respect the grading.
\begin{defn}
\label{def:twisted A8 algebra}A $G$\emph{-gapped twisted} $A_{\infty}$
\emph{algebra structure on $C$ over $R$} is a collection $\left\{ m_{k,\beta}\right\} $
of maps 
\[
m_{k,\beta}:C{}^{\otimes_{R}k}\to C\left[2-k-\mu\left(\beta\right),\mu\left(\beta\right)\mod2\right]
\]
for each $k\in\zz_{\geq0}$ and $\beta\in G$, such that 

\begin{equation}
m_{0,\mathbf{0}}=0,\label{eq:m0,0=00003D0}
\end{equation}
and for every $\beta\in G$ and $k\geq0$ we have 

\begin{multline}
\sum_{k_{1}+k_{2}=k+1}\sum_{\beta_{1}+\beta_{2}=\beta}\sum_{i=1}^{k-k_{2}+1}\left(-1\right)^{*}m_{k_{1},\beta_{1}}\left(x_{1},...,x_{i-1},m_{k_{2},\beta_{2}}\left(x_{i},...,x_{i+k_{2}-1}\right),...,x_{k}\right)=0\label{eq:twisted A8 relations in components}
\end{multline}
for 
\[
*=\sum_{j=1}^{i-1}\left(\mbox{\codim}x_{j}-1\right)+\mu\left(\beta_{2}\right)\sum_{j=1}^{i-1}\left(\ls x_{j}+\codim x_{j}-1\right)+\mu\left(\beta_{2}\right).
\]

\end{defn}
Depending on the context we may simply say that $\left(C,\left\{ m_{k,\beta}\right\} \right)$
is a \emph{twisted $A_{\infty}$ algebra}, or even just an \emph{algebra. }

A pairing $C\otimes_{R}C\overset{\left\langle \cdot\right\rangle }{\longrightarrow}R\left[-p,q\right]$
is called \emph{antisymmetric }if 
\begin{equation}
\left\langle u\otimes v\right\rangle =\left(-1\right)^{1+\left(\codim u-1\right)\left(\codim v-1\right)}\left\langle v\otimes u\right\rangle \label{eq:bracket antisymmetry}
\end{equation}
and \emph{non-degenerate} if for any $u\neq0$ there exists some $v$
such that $\left\langle u,v\right\rangle \neq0$.
\begin{defn}
\label{def:cyclic twisted A8 algebra}We say $\left(C,\left\{ m_{k,\beta}\right\} ,\left\langle \cdot\right\rangle \right)$
is a \emph{cyclic twisted $A_{\infty}$ algebra} if $\left(C,\left\{ m_{k,\beta}\right\} \right)$
is a twisted $A_{\infty}$ algebra and 

(1) $\left\langle \cdot\right\rangle :C\otimes_{R}C\to R\left[-p,q\right]$
is an antisymmetric, non-degenerate pairing,

(2) for every $k\geq0$ and $\beta\in G$ we have 
\begin{equation}
\left\langle m_{k,\beta}\left(x_{1},\cdots,x_{k}\right),x_{0}\right\rangle =\left(-1\right)^{\blacktriangle}\left\langle m_{k,\beta}\left(x_{0},\cdots,x_{k-1}\right),x_{k}\right\rangle \label{eq:twisted cyclic symmetry}
\end{equation}
for $\blacktriangle=\left(\codim x_{0}-1\right)\left(\sum_{j=1}^{k}\left(\codim x_{j}-1\right)\right)+\mu\left(\beta\right)\ls x_{0}$,
and 

(3) the induced pairing on $HC=H\left(C,m_{1,\mathbf{0}}\right)$
is perfect; that is the induced map $HC\to\mathbf{Mod}_{R}\left(HC,R\left[-p,q\right]\right)$
is an isomorphism. Here $\mathbf{Mod}_{R}$ is the category of graded
$R$-modules (cf. $\S$ \ref{sub:conventions}).
\end{defn}
Note that $m_{1,\mathbf{0}}^{2}=0$, so $HC$ is well-defined, and
$\left\langle m_{1,\mathbf{0}}x,y\right\rangle \pm\left\langle m_{1,\mathbf{0}}y,x\right\rangle =0$,
so there is indeed an induced pairing on $HC$.
\begin{rem}
\label{rem:fukayacyclic is special case}Our definition \ref{def:discrete submonoid}
of a discrete submonoid differs from Definition 6.2 in \cite{fukayacyclic}
in that $\mu$ can obtain odd values. If $\mu\left(G\right)\subset2\zz$,
it is immediately apparent that the sign in Eq (\ref{eq:twisted A8 relations in components})
reduces to the sign in the filtered $A_{\infty}$ relation (61) in
\cite{fukayacyclic}. Similarly the sign in Eq (\ref{eq:twisted cyclic symmetry})
reduces to the sign in the cyclic symmetry condition (62) in \cite{fukayacyclic}.
In fact, for $\mu\left(G\right)\subset2\zz$ we find that $G$-gapped
cyclic twisted $A_{\infty}$ algebra over $R=\rr$ are in bijection
with $G$-gapped cyclic filtered $A_{\infty}$ algebras in the sense
of Definition 6.4 of \cite{fukayacyclic}.
\end{rem}

\begin{defn}
An element $\mathbf{e}\in C^{0,0}$ is a \emph{strict unit }of the
twisted $A_{\infty}$ algebra $\left(C,\left\{ m_{k,\beta}\right\} \right)$
if 
\begin{equation}
m_{2,\mathbf{0}}\left(\mathbf{e},x\right)=\left(-1\right)^{\codim x}m_{2,\mathbf{0}}\left(x,\mathbf{e}\right)=x\label{eq:unit relation}
\end{equation}
 and $m_{k,\beta}\left(\cdots,\mathbf{e},\cdots\right)=0$ for all
$\left(k,\beta\right)\neq\left(2,\mathbf{0}\right)$.
\end{defn}

\begin{example}
\label{exa:DGA}Suppose $C$ is a $\grading$-graded unital associative
algebra over $R$ with product $\wedge$, and $d:C\to C\left[1\right]=C\left[1,0\right]$
(cf. $\S$ \ref{sub:conventions}) is an $R$-linear differential
satisfying the graded Leibniz rule 
\[
d\left(x\wedge y\right)=dx\wedge y+\left(-1\right)^{\codim x}x\wedge dy.
\]
We will say that $\left(C,d,\wedge\right)$ is an $R$\emph{-DGA},
or a \emph{DGA over $R$},\emph{ }in this case\emph{.} In this case
we can take the trivial discrete submonoid $G=\left\{ \mathbf{0}\right\} $
and construct a unital twisted $A_{\infty}$ algebra $\left(C,\left\{ m_{k,\beta}\right\} \right)$
over $G$ by setting 
\begin{equation}
m_{1,\mathbf{0}}\left(x\right)=\left(-1\right)^{\codim x}dx\label{eq:m1,0 vs differential}
\end{equation}
\begin{equation}
m_{2,\mathbf{0}}\left(x,y\right)=\left(-1\right)^{\codim x+\codim x\cdot\codim y}x\wedge y\label{eq:m2,0 vs. wedge}
\end{equation}
and $m_{k,\mathbf{0}}=0$ for $k\not\neq1,2$.

If $\int:C\to R\left[-p,q\mod2\right]$ is any $R$-linear map (for
some $p\in\zz$ and $q\in\left\{ 0,1\right\} $) and we set 
\begin{equation}
\left\langle x\otimes y\right\rangle :=\left(-1\right)^{\codim x\cdot\codim y+\codim x}\int x\wedge y,\label{eq:pairing from functional}
\end{equation}
then $\left(C,\left\{ m_{k,\beta}\right\} \right)$ satisfies Eq (\ref{eq:twisted cyclic symmetry}).
It follows that $\left(C,\left\{ m_{k,\beta}\right\} ,\left\langle \cdot\right\rangle \right)$
is cyclic if $\left\langle \cdot\right\rangle $ is non-degenerate
and the induced pairing on $H\left(C,d\right)$ is perfect (making
$H\left(C,d\right)$ a Frobenius algebra). In this case we say $\left(C,d,\wedge,\int\right)$
is a \emph{cyclic DGA} \emph{(over $R$)}.
\end{example}

\begin{example}
\label{exa:De Rham DGA}Here is a special case of Example \ref{exa:DGA}
that will be important. Set $R=\cc$. Let $L$ be a closed, non-orientable
manifold. Take $C\left(L\right)$ to be the $\grading$-graded $\cc$
vector space with 
\[
C^{a,b}\left(L\right)=\Omega^{a}\left(L;Or\left(TL\right)^{\otimes_{\cc}b}\right),
\]
the smooth differential $a$-forms forms on $L$ with values in the
local system $Or\left(TL\right)$ (cf. $\S$ \ref{sub:conventions}).
The exterior derivative $d$ and wedge product $\wedge$ make $C\left(L\right)$
a DGA over $R=\cc$. Integration $\int:C\to\cc\left[-n,1\mod2\right]$
turns it into a cyclic DGA, where we set $\int\omega=0$ unless $\omega\in C^{n,1}\left(L\right)$. \end{example}
\begin{defn}
\label{def:deformation}If $\left(C,d,\wedge\right)$ is an $R$-DGA,
a $G$-gapped twisted $A_{\infty}$ algebra structure $\left\{ m_{k,\beta}\right\} $
on $C$ over $R$ will be called a\emph{ deformation of }$\left(C,d,\wedge\right)$
if $m_{1,\mathbf{0}}$ and $m_{2,\mathbf{0}}$ are given by Eqs (\ref{eq:m1,0 vs differential},
\ref{eq:m2,0 vs. wedge}) and $m_{k,\mathbf{0}}=0$ for $k\geq3$.
Similarly, a \emph{cyclic deformation }of a cyclic DGA $\left(C,d,\wedge,\int\right)$
is a cyclic twisted $A_{\infty}$ algebra $\left(C,\left\{ m_{k,\beta}\right\} ,\left\langle \cdot\right\rangle \right)$
where $\left\langle \cdot\right\rangle $ is defined by Eq (\ref{eq:pairing from functional}),
such that $\left(C,\left\{ m_{k,\beta}\right\} \right)$ is a deformation
of $\left(C,d,\wedge\right)$. A (cyclic) deformation is called \emph{unital}
if the unit of the DGA is also a unit for the twisted $A_{\infty}$
algebra.
\end{defn}
For a justification for this terminology, see Remark \ref{rem:deformation explained}.

\subsection{\label{sub:bar coalgebra}Twisted $A_{\infty}$ algebras as differentials;
morphisms and homotopies}

To better see what's going on, and give cleaner and more general (or
at least easy to generalize) definitions and proofs, we reinterpret
twisted $A_{\infty}$ algebras as differentials on a certain bar coalgebra,
see Proposition \ref{prop:Differentials are twisted A8 algebras}
below (this is the \emph{easy-to-use }equivalent definition we referred
to in the introduction).

Fix a discrete submonoid $G$ and a $(2\zz\oplus0)$-graded commutative
unital real algebra $R$. Consider the \emph{Novikov ring} of formal,
possibly infinite, sums  
\[
\Lambda=\Lambda_{0}^{G}\left(R\right):=\left\{ \sum_{\beta\in G}a_{\beta}\epsilon^{\mu\left(\beta\right)}T^{E\left(\beta\right)}|a_{\beta}\in R\right\} 
\]
(as a set, this is just the set of maps $R^{G}$. The product is defined
using the addition in $G$). It is complete with respect to the filtration
of ideals $\left\{ \ff^{E}\Lambda\right\} _{E\in\rr_{\geq0}}$ given
by 
\[
\ff^{E}\Lambda:=\left\{ \sum_{\left\{ \beta|E\left(\beta\right)\geq E\right\} }a_{\beta}\epsilon^{\mu\left(\beta\right)}T^{E\left(\beta\right)}|a_{\beta}\in R\right\} 
\]
We give $\Lambda$ a $\zz\oplus0\subset\grading$ grading by setting
$\deg\epsilon=\left(1,0\right)$ and $\deg T=\left(0,0\right)$. We
denote by $1_{\Lambda}\in\Lambda$ the unit of $\Lambda$. Thus, it
becomes a monoid in $\hfgvect$, and we denote by $\hbilam$ the category
of $\Lambda$-bimodules - see $\S$\ref{sub:conventions} for precise
explanation of what this means. 

Now let $C$ be a $\grading$-graded $R$-module, taken with the discrete
filtration (see $\S$\ref{sub:conventions}). We construct an object
$C^{G}\in\hbilam$ by setting $C^{G}:=\Lambda_{0}^{G}\left(R\right)\hotimes_{R}C$
with the usual grading and filtration on the tensor product. The bimodule
structure is defined in the following, non-symmetric, way:
\[
\epsilon^{r_{1}}T^{E_{1}}\otimes\left(c\epsilon^{r}T^{E}\right)\otimes\epsilon^{r_{2}}T^{E_{2}}\mapsto\left(-1\right)^{\left(\codim c+\ls c+1\right)r_{1}}c\epsilon^{r_{1}+r+r_{2}}T^{E_{1}+E+E_{2}}
\]

\begin{rem}
If $R=\rr$ and $\mu\left(G\right)\subset2\zz$ we can identify $\Lambda_{0}^{G}\left(R\right)$
with $\Lambda_{0}^{G}$ in Definition 6.3 of \cite{fukayacyclic}
by setting $e=\epsilon^{2}$. In this case $C^{G}$ becomes a symmetric
bimodule (i.e. a $\Lambda$-module). See also Remark \ref{rem:fukayacyclic is special case}.
\end{rem}
We now define the \emph{bar coalgebra} $\bb\left(C^{G}\right)$\emph{
associated with the bimodule} $C^{G}$. As an object of $\hbilam$
we have

\[
\bb\left(C^{G}\right):=\widehat{\bigoplus}_{k\geq0}\left(C^{G}\left[1\right]\right)^{\hotimes_{\Lambda}k}
\]
where $\widehat{\oplus}$ is the coproduct in $\hbilam$, so $ $
\[
\bb\left(C^{G}\right)=\left\{ \left(x_{0},...,x_{k},...\right)|\exists E_{i}\to\infty\mbox{ s.t. }x_{i}\in\ff^{E_{i}}\left(C^{G}\left[1\right]\right)^{\hotimes_{\Lambda}k}\right\} 
\]
(cf. Definition 3.2.16 in \cite{FOOO}). We denote by $i_{j}:\left(C^{G}\left[1\right]\right)^{\hotimes_{\Lambda}j}\to\bb\left(C^{G}\left[1\right]\right)$
and $\pi_{j}:\bb\left(C^{G}\left[1\right]\right)\to\left(C^{G}\left[1\right]\right)^{\hotimes_{\Lambda}j}$
the structure maps associated with the coproduct $\widehat{\bigoplus}$.
The \emph{comultiplication} $\Delta:\bb\left(C^{G}\right)\to\bb\left(C^{G}\right)\hotimes_{\Lambda}\bb\left(C^{G}\right)$
is defined by $\Delta\left(x_{1}\otimes\cdots\otimes x_{k}\right)=\sum_{i=0}^{k}\left(x_{1}\otimes\cdots\otimes x_{i}\right)\overline{\otimes}\left(x_{i+1}\otimes\cdots\otimes x_{k}\right)$,
and the \emph{counit }$\eta$ is the projection $\pi_{0}:\bb\left(C^{G}\right)\to\Lambda$.
$\left(\bb\left(C^{G}\right),\Delta,\eta\right)$ is a \emph{coalgebra
}in the sense that%
\footnote{Indeed it is not hard to show $\im\Delta$ is \emph{not }contained
in the incomplete tensor product $\bb\left(C^{G}\right)\otimes_{\Lambda}\bb\left(C^{G}\right)$,
generally speaking. So this is not the same as a coalgebra in the
usual sense which ``happens to be'' complete. With this caveat pointed
out, we will none-the-less find it convenient to refer to it simply
as a coalgebra.%
} it is a comonoid in the category $\hbilam$.

Morphisms between coalgebras are defined in the usual way, and are
always assumed to be counital (dualizing the notion of unital algebra
morphism).
\begin{defn}
If $f_{1},f_{2}:\left(Q,\Delta,\eta\right)\to\left(Q',\Delta',\eta'\right)$
are two coalgebra morphisms, a map $h:Q\left[-d\right]\to Q'$ will
be called an\emph{ $\left(f_{1},f_{2}\right)$-coderivation of degree
$d$} if

\[
\eta'h=0
\]
(so for us, coderivations are always counital) and
\[
\Delta\circ h=\left[\left(h\otimes f_{2}\right)\pm\left(f_{1}\otimes h\right)\right]\circ\Delta
\]
where 
\begin{equation}
\left(\left(h\otimes f_{2}\right)\pm\left(f_{1}\otimes h\right)\right)\left(x\otimes y\right):=hx\otimes f_{2}y+\left(-1\right)^{d\cdot\codim x}f_{1}x\otimes hy.\label{eq:graded coderivation sign}
\end{equation}

A \emph{differential} on a coalgebra $\left(Q,\Delta,\eta\right)$
is an $\left(\mbox{id}_{Q},\mbox{id}_{Q}\right)$-coderivation ${m:Q\left[-1\right]\to Q}$
of degree 1 such that $m^{2}=0$.
\end{defn}

Let $C$ be an $R$-module and let $\bb=\bb\left(C^{G}\right)$ be
the corresponding bar coalgebra. Let $\mbox{Coder}^{1}\left(\bb\right)$
denote the set of $\left(\mbox{id}_{\bb},\mbox{id}_{\bb}\right)$-coderivations
of degree 1. We have the following bijections of sets:

\begin{multline}
\mbox{Coder}^{1}\left(\bb\left(C^{G}\right)\right)\overset{\left(1\right)}{\simeq}\hbilam\left(\bb,C^{G}\left[2\right]\right)\overset{\left(2\right)}{\simeq}\prod_{k\geq0}\hbilam\left(\left(C{}^{G}\left[1\right]\right)^{\hotimes_{\Lambda}k},C^{G}\left[2\right]\right)\\
\overset{\left(3\right)}{\simeq}\prod_{k\geq0}\mathbf{Mod}_{R}\left(C^{\otimes_{R}k},C^{G}\left[2-k\right]\right)\overset{\left(4\right)}{\simeq}\prod_{k\geq0,\beta\in G}\mathbf{Mod}_{R}\left(C^{\otimes_{R}k},C\left[2-k-\mu\left(\beta\right)\right]\right).\label{eq:coderivations in components}
\end{multline}
The bijection (1) is given by the maps $m\mapsto\pi_{1}\circ m$ and
${\Xi^{1}\circ\left(\mbox{id}_{\bb}\otimes\rho\otimes\mbox{id}_{\bb}\right)\circ\Delta^{\circ2}\mapsfrom\rho}$
where $\Delta^{\circ2}:\bb\to\bb\hotimes_{\Lambda}\bb\hotimes_{\Lambda}\bb$
is the reiteration of $\Delta$ and ${\Xi^{d}:\bb\hotimes_{\Lambda}C^{G}\left[1+d\right]\hotimes_{\Lambda}\bb\to\bb\left[d\right]}$
is the unique continuous additive map which satisfies 
\[
\Xi^{d}\left(\left(x_{1}\otimes\cdots\otimes x_{a}\right)\overline{\otimes}y\overline{\otimes}\left(z_{1}\otimes\cdots\otimes z_{b}\right)\right)=\left(-1\right)^{d\cdot\sum_{i=1}^{a}\left(\codim x_{i}-1\right)}\cdots\otimes x_{a}\otimes y\otimes z_{1}\otimes\cdots.
\]
(2) comes from the direct sum decomposition $\bb=\widehat{\oplus}_{k\geq0}\left(C{}^{G}\left[1\right]\right)^{\hotimes_{\Lambda}k}$.
(3) is the extension/restriction of scalars adjunction for $R\to\Lambda$,
with a $k$-degree shift, and (4) comes from the isomorphism of $R$-modules
$C^{G}=\prod_{\beta\in G}\epsilon^{\mu\left(\beta\right)}\tau^{E\left(\beta\right)}C$.

\begin{defn}
A coderivation $m:\bb\left(C^{G}\right)\to\bb\left(C^{G}\right)$
will be called \emph{tame }if $\left(\pi_{1}\circ m\circ i_{0}\right)\left(1_{\Lambda}\right)\in\ff^{>0}\left(C^{G}\left[1\right]\right)$.

If $C'$ is another $R$-module with $\bb'=\bb\left(C'^{G}\right)$
the corresponding bar coalgebra, a morphism $f:\bb\to\bb'$ will be
called \emph{tame }if $\left(\pi_{1}'\circ f\circ i_{0}\right)\left(1_{\Lambda}\right)\in\ff^{>0}\left(C'^{G}\left[1\right]\right)$,
where $\pi_{1}':\bb'\to C'^{G}\left[1\right]$ is the projection.
\end{defn}
The following simple proposition is important, in that it allows us
to redefine twisted $A_{\infty}$ algebras as tame differentials on
the bar coalgebra. We will work from this vantage point in the remainder
of the paper.
\begin{prop}
\label{prop:Differentials are twisted A8 algebras}Let $G$ be a discrete
submonoid, $R$ a commutative unital real $\left(2\zz\oplus0\right)$-graded
algebra, and $C$ a $\grading$-graded $R$-module. Under the bijection
(\ref{eq:coderivations in components}), tame differentials $m$ on
$\bb=\bb\left(C^{G}\right)$ correspond to twisted $A_{\infty}$ algebra
structures $\left\{ m_{k,\beta}\right\} $ on $C$.\end{prop}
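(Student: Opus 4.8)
The plan is to read the statement off the chain of bijections (\ref{eq:coderivations in components}), which already identifies $\mbox{Coder}^1(\bb)$ with collections $\{m_{k,\beta}\}$ of $R$-module maps $m_{k,\beta}\colon C^{\otimes_R k}\to C[2-k-\mu(\beta),\mu(\beta)\bmod 2]$. Two things then remain: first, to match the tameness condition with Eq (\ref{eq:m0,0=00003D0}); and second, to show that, under (\ref{eq:coderivations in components}), the equation $m^2=0$ translates into the twisted $A_\infty$ relations (\ref{eq:twisted A8 relations in components}).

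For the first, tracing bijection $(1)$ followed by $(4)$, the element $(\pi_1\circ m\circ i_0)(1_\Lambda)$ of $C^G[1]=\prod_{\beta\in G}\epsilon^{\mu(\beta)}T^{E(\beta)}C[1]$ is exactly $\sum_{\beta\in G}m_{0,\beta}(\ )\,\epsilon^{\mu(\beta)}T^{E(\beta)}$. Since $\ff^{>0}(C^G[1])$ is the sub-bimodule on which the $\beta=\mathbf 0$ coordinate vanishes, $m$ is tame if and only if $m_{0,\mathbf 0}=0$, i.e.\ Eq (\ref{eq:m0,0=00003D0}) holds.

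For the second, note that $m^2$ is again an $(\mbox{id}_\bb,\mbox{id}_\bb)$-coderivation, now of degree $2$: the $m\otimes m$ cross terms in $\Delta\circ m^2$ cancel because $m$ has odd degree, and $\eta\circ m^2=(\eta\circ m)\circ m=0$. The cofreeness argument behind bijection $(1)$ applies verbatim with $\Xi^2$ in place of $\Xi^1$, so a degree-$2$ $(\mbox{id}_\bb,\mbox{id}_\bb)$-coderivation vanishes iff its composition with $\pi_1$ does; hence $m^2=0$ iff $\pi_1\circ m\circ m=0$. Decomposing $m=\sum_{k,\beta}\hat m_{k,\beta}$ into the coderivation extensions of its components --- each $\hat m_{k,\beta}$ inserts $m_{k,\beta}$ into a single tensor slot and is the identity elsewhere, as the formula $\Xi^1\circ(\mbox{id}\otimes\rho\otimes\mbox{id})\circ\Delta^{\circ2}$ with the deconcatenating $\Delta$ makes explicit --- one computes $\pi_1\circ m\circ\hat m_{k_2,\beta_2}$ on $x_1\otimes\cdots\otimes x_k$ to be $\sum_i\pm\,m_{k_1,\beta_1}(x_1,\dots,m_{k_2,\beta_2}(x_i,\dots,x_{i+k_2-1}),\dots,x_k)$ with $k_1+k_2=k+1$. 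Summing over all $(k_1,\beta_1,k_2,\beta_2)$ with $\beta_1+\beta_2=\beta$ and demanding the result vanish is exactly the left-hand side of (\ref{eq:twisted A8 relations in components}) set to zero.

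The one substantive point --- and the step I expect to be the main obstacle --- is verifying that the sign accumulated in the previous computation is precisely $(-1)^*$ with $*$ as in Definition \ref{def:twisted A8 algebra}. Three sources of signs enter, and the conventions in the definitions of $C^G$, of the degree shifts, and of the coderivation sign (\ref{eq:graded coderivation sign}) and of $\Xi^d$ were arranged so that they combine correctly: (i) the Koszul sign from moving the operation $m_{k_2,\beta_2}$ past the arguments $x_1,\dots,x_{i-1}$ in the degree-shifted bar coalgebra, encoded in $\Xi^1$ and in (\ref{eq:graded coderivation sign}), which contributes the usual $\sum_{j<i}(\codim x_j-1)$; (ii) the twist of the \emph{non-symmetric} $\Lambda$-bimodule structure on $C^G$, whose sign factor $(-1)^{(\codim c+\ls c+1)r_1}$ is incurred when the Novikov weight $\epsilon^{\mu(\beta_2)}T^{E(\beta_2)}$ carried by $m_{k_2,\beta_2}(x_i,\dots,x_{i+k_2-1})$ is commuted leftward past $x_1,\dots,x_{i-1}$, and which is what produces the $\mu(\beta_2)$-weighted summands in $*$; and (iii) the interaction of the odd local-system degree $\mu(\beta_2)\bmod 2$ of the output of $m_{k_2,\beta_2}$ with the two shifts by $[1]$ present in the bar construction, which accounts for the residual summand $\mu(\beta_2)$. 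Assembling (i)--(iii) and comparing with $*$ is a direct but bookkeeping-heavy computation, which I expect to be the bulk of the work. Since every step above is reversible, the bijection (\ref{eq:coderivations in components}) restricts to the asserted bijection between tame differentials on $\bb$ and twisted $A_\infty$ structures on $C$.
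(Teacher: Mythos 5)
Your proposal is correct and follows essentially the same route as the paper's proof: reduce $m^2=0$ to the vanishing of $\pi_1\circ m\circ m$ on elementary tensors, decompose $m$ into the coderivations $\hat m_{k,\beta}$, and track the three sign sources, which match the paper's $\wasylozenge_1$ (Koszul sign from the shift), $\wasylozenge_2$ (the non-symmetric bimodule twist when $\lambda(\beta_2)$ is moved left), and $\wasylozenge_3$ (the residual $\mu(\beta_2)$ term), with tameness likewise identified with $m_{0,\mathbf 0}=0$. The final sign assembly you defer is exactly the bookkeeping the paper also only records rather than derives.
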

\begin{proof}
Let $m\in\mbox{Coder}^{1}\left(\bb\right)$. For $x_{1},...,x_{k}\in C$
we write 
\[
\left(\pi_{1}\circ m\circ m\right)\left(x_{1}\otimes\cdots\otimes x_{k}\right)=\sum_{\beta\in G}\lambda\left(\beta\right)\mathsf{C}\left(x_{1},...,x_{k};\beta\right)
\]
 for $\mathsf{C}\left(x_{1},...,x_{k};\beta\right)\in C$ and $\lambda\left(\beta\right):=\epsilon^{\mu\left(\beta\right)}T^{E\left(\beta\right)}\in\Lambda^{\mu\left(\beta\right),0}$.
Clearly $m^{2}=0$ iff $\mathsf{C}\left(x_{1},...,x_{k};\beta\right)=0$
for all $x_{1},...,x_{k}\in C$ and $\beta\in G$. We compute $\mathsf{C}\left(x_{1},...,x_{k};\beta\right)$
as follows. 

\begin{multline*}
\lambda\left(\beta\right)\mathsf{C}\left(x_{1},...,x_{k};\beta\right)=\\
=\sum\left(-1\right)^{\wasylozenge_{1}}\lambda\left(\beta_{1}\right)m_{k_{1},\beta_{1}}\big(x_{1}\otimes\cdots\otimes x_{i-1}\otimes\\
\otimes\lambda\left(\beta_{2}\right)m_{k_{2},\beta_{2}}\left(x_{i}\otimes\cdots\otimes x_{i+k_{2}-1}\right)\otimes x_{i+k_{2}}\otimes\cdots\otimes x_{k}\big)\\
=\lambda\left(\beta_{1}\right)\sum\left(-1\right)^{\wasylozenge_{1}+\wasylozenge_{2}}m_{k_{1},\beta_{1}}\left(\lambda\left(\beta_{2}\right)x_{1}\otimes\cdots\otimes m_{k_{2},\beta_{2}}\left(x_{i}\otimes\cdots\otimes x_{i+k_{2}-1}\right)\otimes\cdots\right)=\\
=\lambda\left(\beta\right)\sum\left(-1\right)^{\wasylozenge_{1}+\wasylozenge_{2}+\wasylozenge_{3}}m_{k_{1},\beta_{1}}\left(x_{1}\otimes\cdots\otimes m_{k_{2},\beta_{2}}\left(x_{i}\otimes\cdots\otimes x_{i+k_{2}-1}\right)\otimes\cdots\right)
\end{multline*}
where all the sums range over 
\[
\left\{ \left(k_{1},k_{2},\beta_{1},\beta_{2},i\right)|k_{1}+k_{2}=k+1,\;\beta_{1}+\beta_{2}=\beta,\:1\leq i\leq k-k_{2}+1\right\} ,
\]
and the signs are 

\begin{align*}
\wasylozenge_{1} & =\sum_{j=1}^{i-1}\left(\codim x_{j}-1\right)\\
\wasylozenge_{2} & =\sum_{i=1}^{j-1}\left(\codim x_{j}+\ls x_{j}+1\right)\mu\left(\beta_{2}\right)\\
\wasylozenge_{3} & =\mu\left(\beta_{2}\right)-1
\end{align*}
Clearly, the vanishing of $\mathsf{C}\left(x_{1},...,x_{k};\beta\right)$
is equivalent to Eq (\ref{eq:twisted A8 relations in components}).
 Requiring $m$ to be tame is tantamount to Eq (\ref{eq:m0,0=00003D0}).
\end{proof}
We will often refer to a pair $\left(\bb,m\right)$ as a twisted $A_{\infty}$
algebra; by that we mean that $\bb=\bb\left(C^{G}\right)$ for some
$C$ and $m$ is a tame differential on $\bb$. 
\begin{rem}
\label{rem:deformation explained}Let $G,G'$ be two discrete submonoids,
and let $R,R'$ be two unital commutative $\left(2\zz\oplus0\right)$-graded
real algebras. Denote $\Lambda:=\Lambda_{0}^{G}\left(R\right)$ and
$\Lambda'=\Lambda_{0}^{G'}\left(R'\right)$, and suppose we have a
map $\Lambda\to\Lambda'$. If $C$ is any $R$-module, we have a natural
isomorphism 
\[
\bb_{\Lambda}\left(C\hotimes_{R}\Lambda\right)\otimes_{\Lambda}\Lambda'=\bb_{\Lambda'}\left(C\hotimes_{R}\Lambda'\right)
\]
which commute with $\Delta,\eta$ in the obvious way. Hereafter, we
use a subscript as in $\bb_{\Lambda}\left(C\hotimes_{R}\Lambda\right)$
to denote the underlying $\hfgvect$ monoid over which the bar complex
is constructed, unless $\Lambda$ is clear from the context. It follows
that any differential $m$ on $\bb_{\Lambda}\left(C\hotimes_{R}\Lambda\right)$
defines a differential $m':=m\otimes\mbox{id}_{\Lambda'}$ on $\bb_{\Lambda'}\left(C\hotimes_{R}\Lambda'\right)$.

A special case of the above is when we take $G'=\left\{ 0\right\} $,
$R'=R$, so $\Lambda'=R$. We then have a quotient map $\Lambda\to\Lambda'=R$
obtained by setting $T=0,\epsilon=0$. If $\left(C,\left\{ m_{k,\beta}\right\} \right)$
is a $G$-gapped twisted $A_{\infty}$ algebra over $R$, which is
a deformation of some DGA $\left(C,d,\wedge\right)$ (cf. Definition
\ref{def:deformation}) then $m'=m\otimes\mbox{id}_{R}$ is the twisted
$A_{\infty}$ algebra which corresponds to $\left(C,d,\wedge\right)$
as in Example \ref{exa:DGA}.\end{rem}
\begin{defn}
(a) Let $\left(\bb,m\right),\left(\bb',m'\right)$ be twisted $A_{\infty}$
algebras. A \emph{twisted $A_{\infty}$ morphism }$f:\left(\bb,m\right)\to\left(\bb',m'\right)$
is a tame coalgebra morphism $f:\bb\to\bb'$ which is a chain map:
$f\circ m=m'\circ f$. 

(b) Given two twisted $A_{\infty}$ morphisms $f_{1},f_{2}:\left(\bb,m\right)\to\left(\bb',m'\right)$
a \emph{homotopy $h:f_{1}\Rightarrow f_{2}$ }is an $\left(f_{1},f_{2}\right)$-coderivation
$h:\bb\left[+1\right]\to\bb'$ of degree $\left(-1\right)$ with $m'h+hm=f_{2}-f_{1}$. \end{defn}
\begin{rem}
\label{rem:general coderivation in components}The bijection (\ref{eq:coderivations in components})
generalizes easily to $\left(f_{1},f_{2}\right)$-coderivations of
any degree $d$ if we set $\Xi^{d}\circ\left(f_{1}\otimes\rho\otimes f_{2}\right)\circ\Delta^{2}\mapsfrom\rho$. 

If $\bb=\bb\left(C^{G}\right)$ and $\bb'=\bb\left(C'^{G}\right)$
are two bar coalgebras and we let $Mor\left(\bb,\bb'\right)$ denote
the set of \emph{tame }coalgebra morphisms $f:\bb\to\bb'$, then we
have a bijection $f\mapsto\left\{ f_{k,\beta}|f_{0,\mathbf{0}}=0\right\} $
\begin{equation}
Mor\left(\bb,\bb'\right)\simeq\prod_{\left(k,\beta\right)\neq\left(0,\mathbf{0}\right)}\mathbf{Mod}_{R}\left(C^{\otimes_{R}k},C'\left[1-k-\mu\left(\beta\right)\right]\right)\label{eq:morphism in components}
\end{equation}
The maps $f_{k,\beta}:C^{\otimes_{R}k}\to C'\left[1-k-\mu\left(\beta\right)\right]$
are uniquely determined by the following equation:

\[
\pi_{1}'\circ f\circ i_{k}=\sum_{\beta}\epsilon^{\mu\left(\beta\right)}T^{E\left(\beta\right)}f_{k,\beta}
\]
where $i_{k}:\left(C^{G}\right)^{\hotimes_{\Lambda}k}\to\bb\left(C^{G}\right)$
and $\pi_{1}':\bb'\to C'^{G}$ are the coproduct structure maps.

This is proved by writing down bijections, similar to Eq (\ref{eq:coderivations in components}).
The bijection (1) in Eq (\ref{eq:coderivations in components}) is
replaced by the bijection of sets 
\[
Mor\left(\bb\left(C^{G}\right),\bb\left(C'^{G}\right)\right)\simeq\left\{ \rho:\bb\left(C^{G}\right)\to C'^{G}\left[1\right]:\left(\rho\circ i_{1}\right)\left(1_{\Lambda}\right)\in\ff^{>0}\left(C'^{G}\right)\right\} 
\]
where $\rho$ maps to the morphism

\begin{equation}
\hat{\rho}:=\sum_{m\geq0}i_{m+1}'\circ\rho^{\otimes\left(m+1\right)}\circ\Delta^{m}\label{eq:reconstructing a morphism}
\end{equation}
where $i_{m+1}':\left(C'^{G}\left[1\right]\right)^{\hotimes_{\Lambda}\left(m+1\right)}\to\bb'\left(C^{G}\left[1\right]\right)$
is the coproduct structure map. We need to require $\left(\rho\circ i_{1}\right)\left(1_{\Lambda}\right)\in\ff^{>0}\left(C'^{G}\right)$
for the sum in (\ref{eq:reconstructing a morphism}) to converge;
see also the discussion following Eq (3.2.28) in \cite{FOOO} (the
requirement that \emph{differentials} be tame will be used only later,
in the proof of the homological perturbation lemma, Theorem \ref{thm:homological perturbation lemma}).
The other modifications to Eq (\ref{eq:coderivations in components})
are straightforward.

At any rate, using these bijections one can spell out everything components:
the twisted $A_{\infty}$ morphism relation $f\circ m=m'\circ f$,
the formula for the composition of two morphisms $f_{1}\circ f_{2}$,
and the homotopy equation $m'h+hm=f_{2}-f_{1}$, to name a few relations.
Since we will avoid working in components, we only illustrate this
principle with the following proposition.\end{rem}
\begin{prop}
\label{prop:morphism in components}Let $\left(C,\left\{ m_{k,\beta}\right\} \right)$
and $\left(C',\left\{ m_{k,\beta}'\right\} \right)$ be twisted $A_{\infty}$
algebras, with corresponding differential coalgebras $\left(\bb,m\right)$
and $\left(\bb',m'\right)$. The bijection \ref{eq:morphism in components}
induces a bijection from the set of twisted $A_{\infty}$ morphisms
$f:\left(\bb,m\right)\to\left(\bb',m'\right)$ and sets of $R$-module
maps 
\[
\left\{ f_{k,\beta}:C^{\otimes_{R}k}\to C'\left[1-k-\mu\left(\beta\right)\right]|f_{0,\mathbf{0}}=0\right\} _{k\geq0,\beta\in G}
\]
such that for any $k\geq0,\beta\in G$ and $x_{1},...,x_{k}\in C$
we have

\begin{multline}
\sum\left(-1\right)^{\wasylozenge_{L}}m_{r,\beta_{0}}\left(f_{k_{1},\beta_{1}}\left(x_{1},...,x_{k_{1}}\right),...,f_{k_{r},\beta_{r}}\left(x_{k-k_{r}+1},...,x_{k}\right)\right)=\\
=\sum\left(-1\right)^{\wasylozenge_{R}}f_{k_{1},\beta_{1}}\left(x_{1},...,m_{k_{2},\beta_{2}}\left(x_{i},...,x_{i+k_{2}-1}\right),...,x_{k}\right).\label{eq:morphism relation}
\end{multline}
The sum on the left hand side ranges over all $r$-tuples of pairs
$\left(\left(k_{1},\beta_{1}\right),...,\left(k_{r},\beta_{r}\right)\right)$
and $\beta_{0}\in G$ with $\sum_{j=1}^{r}k_{j}=k$ and $\beta_{0}+\sum_{j=1}^{r}\beta_{j}=\beta$.
The sum on the right hand side ranges over all $k_{1},k_{2}\geq0$
and $\beta_{1},\beta_{2}\in G$ such that $k_{1}+k_{2}=k+1$ and $\beta_{1}+\beta_{2}=\beta$.
The signs are as follows.

\[
\wasylozenge_{L}=\sum_{i=1}^{r}\mu\left(\beta_{i}\right)+\sum_{b=1}^{r}\mu\left(\beta_{b}\right)\cdot\left(\left(b-1\right)+\sum_{j\leq\sum_{i=1}^{b-1}k_{i}}\left(\ls x_{j}+\codim x_{j}-1\right)\right)
\]
\[
\wasylozenge_{R}=\sum_{j=1}^{i-1}\left(\codim x_{j}-1\right)+\mu\left(\beta_{2}\right)\cdot\sum_{j=1}^{i-1}\left(1+\ls x_{j}+\codim x_{j}\right).
\]

\end{prop}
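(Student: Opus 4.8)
The plan is to run the same machine used for Proposition~\ref{prop:Differentials are twisted A8 algebras}: push the coalgebra-level equation $f\circ m=m'\circ f$ down to an equality of maps $\bb\to C'^{G}[1]$, and then read it off component by component via the bijections in (\ref{eq:coderivations in components}), (\ref{eq:morphism in components}) and Remark~\ref{rem:general coderivation in components}. The first step is to note that both $f\circ m$ and $m'\circ f$ are $(f,f)$-coderivations of degree $1$. Indeed, applying $\Delta'$ and using that $f$ is a counital coalgebra morphism and that $m,m'$ are $(\mathrm{id},\mathrm{id})$-coderivations, one gets $\Delta'\circ(f\circ m)=\bigl[(f\circ m)\otimes f\pm f\otimes(f\circ m)\bigr]\circ\Delta$, similarly for $m'\circ f$, while $\eta'\circ f\circ m=\eta\circ m=0=\eta'\circ m'\circ f$. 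By the degree-$d$ version of (\ref{eq:coderivations in components}) recorded in Remark~\ref{rem:general coderivation in components}, an $(f,f)$-coderivation of degree $1$ is determined by its composite with $\pi_{1}'$, so $f$ is a twisted $A_\infty$-morphism if and only if
\[
\pi_{1}'\circ f\circ m=\pi_{1}'\circ m'\circ f .
\]
Both sides are honest (convergent) maps because $f$ and $m$ are tame, exactly as in the discussion around (\ref{eq:reconstructing a morphism}).

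Next I would expand each side. For the right-hand side, insert $\mathrm{id}_{\bb'}=\sum_{r\ge0}i_{r}'\circ\pi_{r}'$ to obtain $\pi_{1}'\circ m'\circ f=\sum_{r}\bigl(\pi_{1}'\circ m'\circ i_{r}'\bigr)\circ\bigl(\pi_{r}'\circ f\bigr)$. Under (\ref{eq:coderivations in components}) the factor $\pi_{1}'\circ m'\circ i_{r}'$ is $\sum_{\beta_{0}}\epsilon^{\mu(\beta_{0})}T^{E(\beta_{0})}m'_{r,\beta_{0}}$, and since $f$ is a coalgebra morphism $\pi_{r}'\circ f=(\pi_{1}'\circ f)^{\otimes r}\circ\Delta^{r-1}$ is the $r$-fold reconstruction (\ref{eq:reconstructing a morphism}), whose components are the tensor products $f_{k_{1},\beta_{1}}\otimes\cdots\otimes f_{k_{r},\beta_{r}}$ with $\sum_{j}k_{j}=k$. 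Substituting and evaluating on $x_{1}\otimes\cdots\otimes x_{k}$ produces the left-hand side of (\ref{eq:morphism relation}). For the left-hand side, instead expand $m=\Xi^{1}\circ(\mathrm{id}_{\bb}\otimes(\pi_{1}\circ m)\otimes\mathrm{id}_{\bb})\circ\Delta^{\circ2}$, recall that $\pi_{1}'\circ f$ is the map whose restriction to the length-$n$ part is $\sum_{\beta}\epsilon^{\mu(\beta)}T^{E(\beta)}f_{n,\beta}$ by (\ref{eq:morphism in components}), and that the inner $\pi_{1}\circ m$ is $\sum_{\beta_{2}}\epsilon^{\mu(\beta_{2})}T^{E(\beta_{2})}m_{k_{2},\beta_{2}}$; applying $\pi_{1}'\circ f$ to the resulting length-$(k-k_{2}+1)$ bar word yields the right-hand side of (\ref{eq:morphism relation}).

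The only substantive work left is the sign bookkeeping, and I expect this to be the main (though routine) obstacle. Three sources of signs must be tracked and regrouped exactly as in the proof of Proposition~\ref{prop:Differentials are twisted A8 algebras}: (i) the factors $(-1)^{\sum(\codim x_{i}-1)}$ produced by $\Xi^{1}$ each time an inner output is spliced into a bar word; (ii) the Koszul signs $(-1)^{(\codim+\ls+1)\mu(\beta)}$ incurred each time a Novikov monomial $\epsilon^{\mu(\beta)}T^{E(\beta)}$ is pulled out of an inner slot to the front, coming from the non-symmetric $\Lambda$-bimodule structure on $C^{G}$; and (iii) the shift corrections (the $\sum_{i}\mu(\beta_{i})$ and the $(b-1)$ summands) coming from the degree shifts $C^{G}[1],C^{G}[2]$ and from the identification (4) in (\ref{eq:coderivations in components}) of $C^{G}$ with $\prod_{\beta}\epsilon^{\mu(\beta)}T^{E(\beta)}C$. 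On the right-hand side the outer sign accumulates as $\lambda(\beta_{b})$ is carried past the preceding $b-1$ blocks $f_{k_{1},\beta_{1}},\dots,f_{k_{b-1},\beta_{b-1}}$ together with their $\sum_{i<b}k_{i}$ arguments, giving $\wasylozenge_{L}$; on the left-hand side $\lambda(\beta_{2})$ is carried past $x_{1},\dots,x_{i-1}$ and combined with the $\Xi^{1}$ sign, giving $\wasylozenge_{R}$. Collecting everything yields precisely the stated exponents; the rest of the argument is formal.
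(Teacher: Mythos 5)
Your proposal is correct and is exactly the argument the paper intends: the paper's own proof is just the word ``Straightforward,'' deferring to the machinery of Remark~\ref{rem:general coderivation in components} and the sign computation in the proof of Proposition~\ref{prop:Differentials are twisted A8 algebras}, which is precisely what you unwind. The key reduction (that $f\circ m$ and $m'\circ f$ are $(f,f)$-coderivations of degree $1$, hence determined by their composites with $\pi_1'$) and the three sources of signs you identify are the right ones.
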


\begin{proof}
Straightforward.
\end{proof}
We will find it convenient to denote twisted $A_{\infty}$ morphisms
also as ${f:\left(C,\left\{ m_{k,\beta}\right\} \right)\to\left(C',\left\{ m_{k,\beta}'\right\} \right)}$.
This always means a map of the corresponding bar coalgebras or, equivalently,
a set of $R$-module maps $\left\{ f_{k,\beta}\right\} $ as in ${\mbox{Proposition }\ref{prop:morphism in components}}$.

\subsection{Pairing cocycles, cyclic and unital morphisms}

Let $G$ be a discrete submonoid, $R$ a commutative unital $\left(2\zz\oplus0\right)$-graded
real algebra. Let $\left(\bb,m\right)$ be a $G$-gapped twisted $A_{\infty}$
algebra over $R$. Recall this means $\bb=\bb\left(C^{G}\right)$
for some $\grading$-graded $R$-module $C$. 

We define $CC\left(\bb\right):=\hbilam\left(\bb,\Lambda\right)$ to
be the complex of $\Lambda$-bimodule maps equipped with the differential
$m^{*}:CC\left(\bb\right)\to CC\left(\bb\right)\left[1\right]$ defined
by $m^{*}\phi=\phi\circ m$. Note every pairing $C\otimes_{R}C\overset{\left\langle \cdot\right\rangle }{\longrightarrow}R\left[-p,q\mod2\right]$
defines an element $\lozenge=\left\langle \cdot\right\rangle \otimes\mbox{id}_{\Lambda}\in CC^{-p,q}\left(\bb\right)$
by setting 
\[
\lozenge\left(x_{1}\otimes x_{2}\right)=\left\langle x_{1},x_{2}\right\rangle 
\]
for $x_{i}\in C\subset C^{G}$ and 
\[
\lozenge\left(\mathbf{x}\right)=0
\]
for $\mathbf{x}\in\left(C^{G}\left[1\right]\right)^{\hotimes_{\Lambda}k}$
for $k\neq2$. Given $\lozenge\in CC^{-p,q}\left(\bb\right)$ we can
``read off'' the unique pairing $\left\langle \cdot\right\rangle $
which produced it. 
\begin{defn}
An element $\lozenge\in CC^{-p,q}\left(\bb\right)$ will be called
a \emph{pairing cocycle for $\left(\bb,m\right)$} \emph{of degree
$\left(p,q\right)$ }if $\lozenge=\left\langle \cdot\right\rangle \otimes\mbox{id}_{\Lambda}$
where ${\left\langle \cdot\right\rangle :C\otimes_{R}C\to R\left[-p,q\mod2\right]}$
is an antisymmetric, non-degenerate pairing which induces a perfect
pairing on $H\left(C,m_{1,\mathbf{0}}\right)$, and we have 
\[
m^{*}\lozenge=0.
\]

\end{defn}
We then have
\begin{prop}
Pairing cocycles are in bijection with pairings ${\left\langle \cdot\right\rangle :C\otimes_{R}C\to R\left[-p,q\mod2\right]}$
such that $\left(C,\left\{ m_{k,\beta}\right\} ,\left\langle \cdot\right\rangle \right)$
is a cyclic twisted $A_{\infty}$ algebra (cf. Definition \ref{def:cyclic twisted A8 algebra}). \end{prop}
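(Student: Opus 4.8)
The plan is to notice that nearly everything is already in place, so that the proposition reduces to a single component-level sign computation. The assignment $\langle\cdot\rangle\mapsto\lozenge:=\langle\cdot\rangle\otimes\mathrm{id}_{\Lambda}$, together with its inverse $\lozenge\mapsto\left(\langle u\otimes v\rangle:=\lozenge(u\otimes v)\right)$ for $u,v\in C\subset C^{G}$, is a bijection between pairings $C\otimes_{R}C\to R[-p,q\bmod2]$ and those elements of $CC^{-p,q}(\bb)=\hbilam(\bb,\Lambda)$ supported on the length-$2$ component $(C^{G}[1])^{\hotimes_{\Lambda}2}$; this is exactly the ``reading off'' observation made just before the definition of a pairing cocycle. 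Under this correspondence the conditions ``antisymmetric'', ``non-degenerate'' and ``inducing a perfect pairing on $H(C,m_{1,\mathbf{0}})$'' in the definition of a pairing cocycle are literally conditions (1) and (3) of Definition \ref{def:cyclic twisted A8 algebra}. Hence the proposition will follow once we show that, for such a $\langle\cdot\rangle$, the remaining condition $m^{*}\lozenge=\lozenge\circ m=0$ is equivalent to the cyclic symmetry relation (\ref{eq:twisted cyclic symmetry}), i.e.\ to condition (2).

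First I would unwind $\lozenge\circ m$ in components, in the spirit of the proof of Proposition \ref{prop:Differentials are twisted A8 algebras}. Writing $m$ via bijection (1) of (\ref{eq:coderivations in components}) as $\Xi^{1}\circ(\mathrm{id}_{\bb}\otimes\rho\otimes\mathrm{id}_{\bb})\circ\Delta^{\circ2}$ with $\rho=\pi_{1}\circ m$, and using that $\lozenge$ annihilates every homogeneous component of $\bb$ except $(C^{G}[1])^{\hotimes_{\Lambda}2}$, one sees that $(\lozenge\circ m)$ applied to a word $x_{0}\otimes\cdots\otimes x_{n}$ vanishes unless the collapsed block has length $n-1$; thus the term involving $m_{k,\beta}$ is nonzero only on words of length $k+1$, and the collapsed block must be either $(x_{0},\dots,x_{k-1})$ or $(x_{1},\dots,x_{k})$. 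Consequently
\[
(\lozenge\circ m)(x_{0}\otimes\cdots\otimes x_{k})=\sum_{\beta\in G}\lambda(\beta)\Big[(-1)^{A_{\beta}}\langle m_{k,\beta}(x_{0},\dots,x_{k-1}),x_{k}\rangle+(-1)^{B_{\beta}}\langle x_{0},m_{k,\beta}(x_{1},\dots,x_{k})\rangle\Big],
\]
where $A_{\beta}$ and $B_{\beta}$ are accumulated from the defining sign of $\Xi^{1}$ and from the non-symmetric $\Lambda$-bimodule structure on $C^{G}$ that one must use to move the Novikov monomial $\lambda(\beta)=\epsilon^{\mu(\beta)}T^{E(\beta)}$ out to the front. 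Since $(E,\mu)\colon G\hookrightarrow\rr_{\geq0}\times\zz$ is the inclusion, distinct $\beta$ give distinct, hence $R$-linearly independent, monomials $\lambda(\beta)$ in $\Lambda$, so $m^{*}\lozenge=0$ is equivalent to the vanishing of the bracketed expression for each $\beta$ separately.

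It then remains to recast this per-$\beta$ identity in the shape of (\ref{eq:twisted cyclic symmetry}). Using the antisymmetry (\ref{eq:bracket antisymmetry}) and the fact that $\codim m_{k,\beta}(x_{1},\dots,x_{k})-1=\sum_{j=1}^{k}(\codim x_{j}-1)+1-\mu(\beta)$, one rewrites $\langle x_{0},m_{k,\beta}(x_{1},\dots,x_{k})\rangle$ as a sign times $\langle m_{k,\beta}(x_{1},\dots,x_{k}),x_{0}\rangle$, and the per-$\beta$ relation takes the form $\langle m_{k,\beta}(x_{1},\dots,x_{k}),x_{0}\rangle=(-1)^{?}\langle m_{k,\beta}(x_{0},\dots,x_{k-1}),x_{k}\rangle$. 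The heart of the matter, and the step I expect to be the main obstacle, is to verify $?=\blacktriangle:=(\codim x_{0}-1)\big(\sum_{j=1}^{k}(\codim x_{j}-1)\big)+\mu(\beta)\,\ls x_{0}$. There are exactly three sources of signs to balance: the $(-1)^{d\sum(\codim x_{i}-1)}$ built into $\Xi^{d}$ with $d=1$; the factor $(-1)^{(\codim c+\ls c+1)r_{1}}$ from the $\Lambda$-bimodule action, which is the only place $\ls$ and $\mu(\beta)$ interact and hence the origin of the summand $\mu(\beta)\,\ls x_{0}$ (one of the two terms above requires commuting $\lambda(\beta)$ past $x_{0}$, the other does not); and the antisymmetry sign of $\langle\cdot\rangle$. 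As in Proposition \ref{prop:Differentials are twisted A8 algebras}, the cleanest route is to carry the full bidegree of each intermediate tensor factor and apply the Koszul rule only at the very end. Granting the sign identity, $m^{*}\lozenge=0$ becomes equivalent to (\ref{eq:twisted cyclic symmetry}) for all $k$ and $\beta$ (the case $k=1$, $\beta=\mathbf{0}$ reproducing the identity $\langle m_{1,\mathbf{0}}x,y\rangle\pm\langle m_{1,\mathbf{0}}y,x\rangle=0$ noted after Definition \ref{def:cyclic twisted A8 algebra}), and the claimed bijection follows.
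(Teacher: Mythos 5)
Your proposal is correct and follows essentially the same route as the paper, whose entire proof is the single sentence that a straightforward computation shows Eq.~(\ref{eq:twisted cyclic symmetry}) is equivalent to $m^{*}\lozenge=0$; you have simply made explicit the reduction to condition (2) and the component-level bookkeeping (length of the collapsed block, separation by Novikov monomials, and the three sign sources) that the paper leaves implicit. The one step you flag but do not carry out---verifying that the accumulated sign equals $\blacktriangle$---is exactly the computation the paper also omits, so your write-up is in fact more detailed than the original.
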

\begin{proof}
A straightforward computation shows equation (\ref{eq:twisted cyclic symmetry})
is equivalent to ${m^{*}\lozenge=0}$. 
\end{proof}
Thus we may refer to $\left(\bb,m,\lozenge\right)$ as a cyclic twisted
$A_{\infty}$ algebra. Let $\left(\bb,m,\lozenge\right),\left(\bb',m',\lozenge'\right)$
be cyclic twisted $A_{\infty}$ algebras.
\begin{defn}
\label{def:cyclic and unital morphisms}(a) A \emph{cyclic morphism}
$f:\left(\bb,m,\lozenge\right)\to\left(\bb',m',\lozenge'\right)$
is a twisted $A_{\infty}$ morphism $\left(\bb,m\right)\to\left(\bb',m'\right)$
such that $f^{*}\lozenge'=\lozenge$. 

(b) If $\mathbf{e}\in C^{0,0}\subset C^{G}\subset\bb$ (respectively,
$\mathbf{e}'\in C'^{0,0}\subset\bb'$) is a unit for $\left(\bb,m\right)$
(resp. $\left(\bb',m'\right)$), a morphism $f:\left(\bb,m\right)\to\left(\bb',m'\right)$
will be called \emph{unital} if $f\mathbf{e}=\mathbf{e}'$.\end{defn}
\begin{rem}
If $\mu\left(G\right)\subset2\zz$ it is not hard to see that a morphism
$\left\{ f_{k,\beta}\right\} $ is cyclic (resp. unital) iff Eqs (73,74)
(resp. (72)) in \cite{fukayacyclic} hold.
\end{rem}

\begin{rem}
For the purposes of this paper, pairing cocycles will serve just as
a convenient book-keeping device. In future work we would like to
use them to give a more meaningful definition of the homotopy theory
of cyclic twisted $A_{\infty}$ algebras along the lines of Kontsevich
and Soibelman's work \cite{k+sA8}.
\end{rem}

\section{\label{sec:Homological-algebra}Homological algebra of twisted $A_{\infty}$
algebras}

In this section we will show that under certain assumptions, one
can construct minimal models for (possibly cyclic or unital) twisted
$A_{\infty}$ algebras. This is a central tool for analyzing the homotopy
theory of twisted $A_{\infty}$ algebras. 

\medskip{}

Let $\left(C,d\right)$ be a \emph{dg $R$-module}. Namely, $C$ is
a $\grading$-graded $R$-module and $d:C\to C\left[1\right]$ is
an $R$-linear map which squares to zero. We denote $HC=H\left(C,d\right)$
and define $d'x:=\left(-1\right)^{\codim x}dx$.

\begin{defn}
A twisted $A_{\infty}$ algebra $\left(C,\left\{ m_{k,\beta}\right\} \right)$
is called a \emph{perturbation }of $\left(C,d\right)$ if $m_{1,\mathbf{0}}=d'$. 
\end{defn}
We will construct a minimal model for a perturbation of $\left(C,d\right)$
by transferring the coalgebra differential to $HC:=H\left(C,d\right)$.
In order to carry this out we need some auxiliary data.
\begin{defn}
\label{def:retraction}A \emph{retraction of $\left(C,d\right)$ to
$HC$ }is a 3-tuple $\left(\Pi_{1,\mathbf{0}},I_{1,\mathbf{0}},h_{1,\mathbf{0}}\right)$
(see Remark \ref{rem:retraction notation explained}) where 
\[
\Pi_{1,\mathbf{0}}:C\to HC,\; I_{1,\mathbf{0}}:HC\to C
\]
are chain maps: 
\[
\Pi_{1,\mathbf{0}}d'=0,d'I_{1,\mathbf{0}}=0,
\]
which satisfy 
\[
\Pi_{1,\mathbf{0}}I_{1,\mathbf{0}}=\mbox{id}_{HC},
\]
and $h_{1,\mathbf{0}}:C\to C\left[-1\right]$ is an $R$-module map
such that
\begin{gather*}
d'h_{1,\mathbf{0}}+h_{1,\mathbf{0}}d'x=I_{1,\mathbf{0}}\Pi_{1,\mathbf{0}}-\mbox{id}_{C}.
\end{gather*}
In addition, we require that the following \emph{side conditions }hold: 

\begin{align}
 & h_{1,\mathbf{0}}^{2}=0 & \Pi_{1,\mathbf{0}}h_{1,\mathbf{0}}=0 &  & h_{1,\mathbf{0}}I_{1,\mathbf{0}}=0\label{eq:side conditions}
\end{align}

\end{defn}

\begin{thm}
\label{thm:homological perturbation lemma}Let $\left(\bb\left(C^{G}\right),m\right)$
be a perturbation of $\left(C,d\right)$ and $\left(\Pi_{1,\mathbf{0}},I_{1,\mathbf{0}},h_{1,\mathbf{0}}\right)$
a retraction of $\left(C,d\right)$ to $HC$. Then there exists 
\begin{enumerate}
\item a perturbation $\left(\bb\left(\left(HC\right)^{G}\right),m^{can}\right)$
of $\left(HC,0\right)$, 
\item twisted $A_{\infty}$ morphisms 
\[
\Pi:\left(\bb\left(C^{G}\right),m\right)\to\left(\bb\left(\left(HC\right)^{G}\right),m^{can}\right)
\]
and 
\[
I:\left(\bb\left(\left(HC\right)^{G}\right),m^{can}\right)\to\left(\bb\left(C^{G}\right),m\right)
\]
with \emph{$\Pi\circ I=\mbox{id}_{\bb\left(\left(HC\right)^{G}\right)}$,}
and
\item a homotopy \emph{$h:\mbox{id}_{\bb\left(C^{G}\right)}\Rightarrow I\circ\Pi$}. 
\end{enumerate}
\end{thm}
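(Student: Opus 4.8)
The plan is to prove this by the standard homological perturbation technique, but carried out entirely on the bar coalgebra level so that the tameness and filtration-completeness do the work of guaranteeing convergence. First I would use the retraction $(\Pi_{1,\mathbf{0}},I_{1,\mathbf{0}},h_{1,\mathbf{0}})$ to build three ``uncorrected'' maps on the bar coalgebras: the coalgebra morphism $I^{(0)}\colon\bb((HC)^G)\to\bb(C^G)$ determined (via the bijection in Remark~\ref{rem:general coderivation in components}) by the single component $(I_{1,\mathbf{0}})_{1,\mathbf{0}}=I_{1,\mathbf{0}}$ and all other components zero; the coalgebra morphism $\Pi^{(0)}\colon\bb(C^G)\to\bb((HC)^G)$ with sole component $\Pi_{1,\mathbf{0}}$; and the $(\mathrm{id},\mathrm{id})$-coderivation $H^{(0)}$ of degree $-1$ on $\bb(C^G)$ with sole component $h_{1,\mathbf{0}}$. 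Because $m_{1,\mathbf{0}}=d'$ by hypothesis, the ``unperturbed'' differential $m^{(0)}:=d'$ on the bar side is just the coderivation with sole component $d'$, and I would record that $\Pi^{(0)},I^{(0)}$ are chain maps for $m^{(0)}$, that $\Pi^{(0)}I^{(0)}=\mathrm{id}$, and that $m^{(0)}H^{(0)}+H^{(0)}m^{(0)}=I^{(0)}\Pi^{(0)}-\mathrm{id}$, all as immediate consequences of Definition~\ref{def:retraction} together with the fact that a coderivation is determined by its $\pi_1$-component.

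Next I would split the given differential as $m=m^{(0)}+\delta$ where $\delta$ is the coderivation whose components are the $m_{k,\beta}$ with $(k,\beta)\neq(1,\mathbf{0})$. The key point is that $m$ tame and $m^{(0)}$ exact means $\delta$ raises the Novikov filtration, i.e. $\pi_1\circ\delta\circ i_0$ lands in $\ff^{>0}$ and more generally $\delta$ strictly increases the energy grading on each tensor summand in a way compatible with the filtration on $C^G$; this is exactly what makes the geometric series below converge in the complete category $\hfgvect$. Then I would write down the perturbed data by the usual formulas:
\begin{gather*}
I:=I^{(0)}+\textstyle\sum_{n\geq1}H^{(0)}(\delta H^{(0)})^{n-1}\delta I^{(0)},\quad
\Pi:=\Pi^{(0)}+\textstyle\sum_{n\geq1}\Pi^{(0)}\delta(H^{(0)}\delta)^{n-1}H^{(0)},\\
H:=H^{(0)}+\textstyle\sum_{n\geq1}H^{(0)}(\delta H^{(0)})^{n},\quad
m^{can}:=m^{(0)}_{HC}+\textstyle\sum_{n\geq1}\Pi^{(0)}\delta(H^{(0)}\delta)^{n-1}I^{(0)},
\end{gather*}
where $m^{(0)}_{HC}=0$ is the zero differential on $\bb((HC)^G)$. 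Convergence of each sum follows because $\delta$ increases the energy filtration and $\bb(C^G)$, $\bb((HC)^G)$ are complete; this also shows $m^{can}$ is tame (its $(1,\mathbf{0})$-component is $0$, and the rest is supported in positive energy). I would then verify: (i) $m^{can}$ is a coalgebra coderivation squaring to zero --- the coderivation property is automatic from the bijection, and $(m^{can})^2=0$ follows from the standard telescoping identity using $m^2=0$ and the side conditions $h_{1,\mathbf{0}}^2=0$, $\Pi_{1,\mathbf{0}}h_{1,\mathbf{0}}=0$, $h_{1,\mathbf{0}}I_{1,\mathbf{0}}=0$; (ii) $\Pi$ and $I$ are coalgebra morphisms intertwining $m$ and $m^{can}$; (iii) $\Pi\circ I=\mathrm{id}$, again using the side conditions; (iv) $H$ is an $(\mathrm{id},I\Pi)$-coderivation of degree $-1$ with $mH+Hm=I\Pi-\mathrm{id}$.

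The routine but slightly delicate checks are the signs: every identity above has to be matched against the sign conventions in Equations~(\ref{eq:graded coderivation sign}), (\ref{eq:coderivations in components}) and the $\Xi^d$ maps, and against the sign $*$ in (\ref{eq:twisted A8 relations in components}); I would argue that since a coderivation or coalgebra morphism is uniquely determined by its single $\pi_1$-component, it suffices to check all four identities after post-composing with $\pi_1$, which reduces everything to a finite computation in each Novikov degree that is formally identical to the classical (untwisted) homological perturbation lemma with the bookkeeping sign $\epsilon^{\mu(\beta)}$ carried along multiplicatively. The main obstacle, and the place I expect to spend the most care, is precisely establishing that the tameness hypothesis on $m$ forces $\delta$ to be ``filtration-increasing'' in the sharp sense needed for the series to converge: one must check that there is no zero-energy piece of $\delta$ that could spoil convergence, which is where the hypothesis $m_{1,\mathbf{0}}=d'$ (perturbation) together with tameness ($m_{0,\mathbf{0}}=0$) is used, and that the infinite sums indeed define morphisms in $\hbilam$ rather than merely formal expressions. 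Once convergence is pinned down, the algebraic identities are forced by the classical proof transported verbatim.
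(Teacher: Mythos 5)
Your overall strategy (perturbation series on the bar coalgebra, convergence from completeness) is the same as the paper's, and your formulas for $I,\Pi,H,m^{can}$ agree with Eq (\ref{eq:perturbed retraction}) after regrouping. But there are two genuine gaps. First, your $H^{(0)}$ cannot be the $(\mbox{id},\mbox{id})$-coderivation with sole component $h_{1,\mathbf{0}}$. With that choice the graded commutator $[m^{(0)},H^{(0)}]$ is again an $(\mbox{id},\mbox{id})$-coderivation, whereas $I^{(0)}\Pi^{(0)}-\mbox{id}$ is not (on $x_{1}\otimes x_{2}$ the former gives $(\mathcal{P}-\mbox{id})x_{1}\otimes x_{2}\pm x_{1}\otimes(\mathcal{P}-\mbox{id})x_{2}$ while the latter gives $\mathcal{P}x_{1}\otimes\mathcal{P}x_{2}-x_{1}\otimes x_{2}$, for $\mathcal{P}=I_{1,\mathbf{0}}\Pi_{1,\mathbf{0}}$), so your claimed identity $m^{(0)}H^{(0)}+H^{(0)}m^{(0)}=I^{(0)}\Pi^{(0)}-\mbox{id}$ fails already on length-two tensors; your reduction ``check after post-composing with $\pi_{1}$'' does not apply here because $I^{(0)}\Pi^{(0)}-\mbox{id}$ is not a coderivation. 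One must instead use the telescoping extension $\hat{h}_{1,\mathbf{0}}(x_{1},\dots,x_{k})=\sum_{i}\pm x_{1}\otimes\cdots\otimes h_{1,\mathbf{0}}x_{i}\otimes\mathcal{P}x_{i+1}\otimes\cdots\otimes\mathcal{P}x_{k}$, which is a $(\mathcal{P},\mbox{id})$-coderivation, as in Eq (\ref{eq:extended linear retraction}).

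Second, your convergence mechanism is wrong: the perturbation $\delta=m-\hat{m}_{1,\mathbf{0}}$ does \emph{not} raise the Novikov filtration, because it contains the zero-energy components $\hat{m}_{k,\mathbf{0}}$ for $k\geq2$ (for a deformation of a DGA, $m_{2,\mathbf{0}}$ is the wedge product, which is nonzero and has energy zero). Tameness only kills $m_{0,\mathbf{0}}$; it does not rule out these terms, so your hoped-for resolution (``no zero-energy piece of $\delta$'') is unavailable. The correct argument, which is what the paper does, splits $\delta=\hat{m}_{\geq2,\mathbf{0}}+\hat{m}_{\bullet,\neq\mathbf{0}}$ and uses that the first summand strictly \emph{decreases tensor length} while the second strictly \emph{increases energy}; on an input that agrees with a length-$\leq N'$ element modulo $\ff^{E}$, a word in these operators with $a_{-}$ length-decreasing and $a_{+}$ energy-increasing factors vanishes unless $a_{-}\leq a_{+}+N'$, so for $a\geq3N'$ one forces $a_{+}\geq N'$ and hence the output lies in $\ff^{E}$. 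Without this bookkeeping the series are merely formal. Once both points are repaired, the remaining algebraic identities are indeed the standard ones (cf. the coalgebra perturbation lemma of Huebschmann--Kadeishvili), using the side conditions (\ref{eq:side conditions}) as you indicate.
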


\begin{rem}
\label{rem:retraction notation explained}Our somewhat odd notation
for the retraction is justified by the fact that the $\left(1,\mathbf{0}\right)$
component (see Remark \ref{rem:general coderivation in components})
of the coalgebra morphisms $\Pi$, $I$ and homotopy $h$ are the
retraction's $\Pi_{1,\mathbf{0}},I_{1,\mathbf{0}}$ and $h_{1,\mathbf{0}}$,
respectively.\end{rem}
\begin{proof}
Let $\left\{ m_{k,\beta}:C^{\otimes_{R}k}\to C\left[2-k-\mu\left(\beta\right)\right]\right\} $
denote the components of $m$ and let $\hat{m}_{k,\beta}:\left(C^{G}\left[1\right]\right)^{\hotimes_{\Lambda}k}\left[-1\right]\to C^{G}\left[1\right]$
denote the corresponding coderivations under the bijection (\ref{eq:coderivations in components}),
so $m=\sum_{k,\beta}\hat{m}_{k,\beta}$. Let $\partial'=m-\hat{m}_{1,\mathbf{0}}$. 

Set $\mathcal{P}=I_{1,\mathbf{0}}\circ\Pi_{1,\mathbf{0}}$. We define
morphisms $\hat{\Pi}_{1,\mathbf{0}}:\bb\left(C^{G}\right)\to\bb\left(HC^{G}\right)$
and $\hat{I}_{1,\mathbf{0}}:\bb\left(HC^{G}\right)\to\bb\left(C^{G}\right)$,
and an $\left(\mathcal{P},\mbox{id}_{C}\right)$-coderivation $\hat{h}_{1,\mathbf{0}}:\bb\left(C^{G}\right)\left[+1\right]\to\bb\left(C^{G}\right)$,
by setting

\begin{gather}
\hat{\Pi}_{1,\mathbf{0}}\left(x_{1},\hdots,x_{k}\right)=\Pi_{1,\mathbf{0}}\left(x_{1}\right)\otimes\cdots\otimes\Pi_{1,\mathbf{0}}\left(x_{k}\right),\nonumber \\
\hat{I}_{1,\mathbf{0}}\left(y_{1},\hdots,y_{k}\right)=I_{1,\mathbf{0}}\left(y_{1}\right)\otimes\cdots\otimes I_{1,\mathbf{0}}\left(y_{k}\right),\mbox{ and}\label{eq:extended linear retraction}\\
\hat{h}_{1,\mathbf{0}}\left(x_{1},\hdots,x_{k}\right)=\sum_{i=1}^{k}\left(-1\right)^{\sum_{j=1}^{i-1}\left(\codim x_{j}-1\right)}x_{1}\otimes\cdots\otimes x_{i-1}\otimes h_{1,\mathbf{0}}\, x_{i}\otimes\mathcal{P}x_{i+1}\otimes\cdots\otimes\mathcal{P}x_{k},\nonumber 
\end{gather}

\begin{gather*}
\\
\\
\end{gather*}
for $x_{1},...,x_{k}\in C^{G}$, $y_{1},...,y_{k}\in\left(HC\right)^{G}$,
cf. Eqs (\ref{eq:coderivations in components},\ref{eq:reconstructing a morphism}).
We claim the following expressions satisfy the conditions set out
in the theorem.

\begin{align}
m^{can} & =\sum_{a=0}^{\infty}\hat{\Pi}_{1,0}\partial'\left(\hat{h}_{1,0}\partial'\right)^{a}\hat{I}_{1,0}\label{eq:perturbed retraction}\\
\Pi & =\sum_{a=0}^{\infty}\hat{\Pi}_{1,0}\left(\partial'\hat{h}_{1,0}\right)^{a}\nonumber \\
I & =\sum_{a=0}^{\infty}\left(\hat{h}_{1,0}\partial'\right)^{a}\hat{I}_{1,0}\nonumber \\
h & =\sum_{a=0}^{\infty}\hat{h}_{1,0}\left(\partial'\hat{h}_{1,0}\right)^{a}.\nonumber 
\end{align}

Let us explain why the infinite sums converge point-wise. Suffice
it show that for every $\mathbf{x}\in\bb=\bb\left(C^{G}\right)$ and
$E\in\rr_{\geq0}$ there exists some $N$ with $\left(\partial'\hat{h}_{1,0}\right)^{a}\mathbf{x}\in\ff^{E}\bb$
for all $a\geq N$. Write $\mathbf{x}=x_{0}+x_{1}+\cdots+x_{k}+\cdots$
where $x_{k}\in\left(C^{G}\right)^{\hotimes_{\Lambda}k}$. We say
$x_{k}$ \emph{has length $k$.} 

There exists some $N'$ such that 

(a) $x_{k}\in\ff^{E}\bb\left(C^{G}\right)$ for all $k>N'$ and

(b) $\sum_{i=1}^{N'}E\left(\beta_{i}\right)\geq E$ for any sequence
of $N'$ elements $\beta_{i}\in G$, $\beta_{i}\neq0$ (see Def. \ref{def:discrete submonoid}).

Now we take $N=3N'$, and check that $\left(\partial'\hat{h}_{1,0}\right)^{a}\mathbf{x}\in\ff^{E}\bb$
for all $a\geq N$. Write $\partial'=\hat{m}_{\geq2,\mathbf{0}}+\hat{m}_{\bullet,\neq\mathbf{0}}$
where $\hat{m}_{\geq2,\mathbf{0}}=\sum_{k\geq2}\hat{m}_{k,\mathbf{0}}$
and $\hat{m}_{\bullet,\neq\mathbf{0}}=\sum_{\beta\neq0}\hat{m}_{k,\beta}$.
Since all the maps preserve the filtration, we find that $\left(\partial'\hat{h}_{1,0}\right)^{a}\mathbf{x}=\left(\hat{m}_{\geq2,\mathbf{0}}\hat{h}_{1,0}+\hat{m}_{\bullet,\neq\mathbf{0}}\hat{h}_{1,0}\right)^{a}\mathbf{x}_{\leq N'}\mod\ff^{E}\bb$,
where $\mathbf{x}_{\leq N'}:=x_{0}+x_{1}+\cdots+x_{N'}$. Expand $\left(\hat{m}_{\geq2,\mathbf{0}}\hat{h}_{1,0}+\hat{m}_{\bullet,\neq\mathbf{0}}\hat{h}_{1,0}\right)^{a}$
into a sum of $2^{a}$ products. Suppose one of these products has
$a_{+}$ factors of $\hat{m}_{\bullet,\neq\mathbf{0}}\hat{h}_{1,0}$,
which may increase the length, and $a_{-}$ factors of $\hat{m}_{\geq2,\mathbf{0}}\hat{h}_{1,0}$
which decrease the length, so $a_{-}\leq a_{+}+N'$ or else the product
vanishes on $\mathbf{x}_{\leq N'}$. Since we also have $a_{+}+a_{-}=a$
we find that for $a\geq N=3N'$ we have $a_{+}\geq N'$ which, by
condition (b), implies that $\left(\partial'\hat{h}_{1,0}\right)^{a}\mathbf{x}_{\leq N}\in\ff^{E}\bb$.
This completes the proof that (\ref{eq:perturbed retraction}) is
well-defined.

The verification that (\ref{eq:perturbed retraction}) satisfy all
the conditions of the theorem is a standard result, which we omit.
See the coalgebra perturbation lemma $\left(2.1_{*}\right)$ of \cite{Huebschmann+Kadeishvili}
for a very similar claim, with essentially the same proof. 
\end{proof}
We will explain the relation between this formalism and the more familiar
formalism of labeled ribbon trees, which is also used by Fukaya in
\cite{fukayacyclic}, in Remark \ref{rem:perturbed retraction as sum over trees}
below.

\subsection{Cyclic and unital homological algebra}

Next we want to discuss unital and cyclic refinements of Theorem \ref{thm:homological perturbation lemma}.

\begin{defn}
\label{def:cyclic unital retraction}Given a pairing $C\otimes_{R}C\to R\left[-p,q\right]$,
a retraction $\left(\Pi_{1,\mathbf{0}},I_{1,\mathbf{0}},h_{1,\mathbf{0}}\right)$
will be called \emph{cyclic }if
\begin{equation}
\left\langle h_{1,0}x,y\right\rangle +\left(-1\right)^{\codim x}\left\langle x,h_{1,0}y\right\rangle =0\label{eq:cyclic retraction}
\end{equation}
for any $x,y\in C$. An element $e\in C^{0,0}$ is \emph{a unit }for
the retraction $\left(\Pi_{1,\mathbf{0}},I_{1,\mathbf{0}},h_{1,\mathbf{0}}\right)$
if $h_{1,\mathbf{0}}e=0$. 

\end{defn}
\begin{prop}
\label{prop:unital and cyclic HPL}(a) Let $\left(\bb\left(C^{G}\right),m,\lozenge\right)$
be a cyclic twisted $A_{\infty}$ algebra, such that $\left(\bb\left(C^{G}\right),m\right)$
is a perturbation of $\left(C,d\right)$. Let $\left(\Pi_{1,\mathbf{0}},I_{1,\mathbf{0}},h_{1,\mathbf{0}}\right)$
be a cyclic unital retraction of $\left(C,d\right)$ to $HC$. Let
$I$ and $\Pi$ be given by Eq (\ref{eq:perturbed retraction}). Then
$m^{can}$ is cyclic with respect to the induced pairing on $HC$
and the morphism $I$ is cyclic. 

(b) Let $\left(\bb\left(C^{G}\right),m\right)$ be a twisted $A_{\infty}$
algebra with unit $\mathbf{e}$. If $\mathbf{e}$ is a unit for the
retraction $\left(\Pi_{1,\mathbf{0}},I_{1,\mathbf{0}},h_{1,\mathbf{0}}\right)$
then $\Pi_{1,\mathbf{0}}\mathbf{e}$ is a unit for $m^{can}$ and
the morphisms $I,\Pi$ given by Eq (\ref{eq:perturbed retraction})
are unital.\end{prop}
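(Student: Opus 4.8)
The plan is to exploit the explicit formulas for $m^{can}$, $I$, $\Pi$, $h$ from Eq~(\ref{eq:perturbed retraction}) together with the fact that all of these are built out of the extended operations $\hat\Pi_{1,\mathbf 0}$, $\hat I_{1,\mathbf 0}$, $\hat h_{1,\mathbf 0}$ and $\partial'$. For part (b), the strategy is to check that each building block "respects the unit" and then propagate this through the geometric series. Concretely, since $\mathbf e$ is a unit for $(\bb(C^G),m)$, the coderivation $\hat m_{k,\beta}$ annihilates any bar word containing $\mathbf e$ except when $(k,\beta)=(2,\mathbf 0)$, where $m_{2,\mathbf 0}(\mathbf e,x)=(-1)^{\codim x}m_{2,\mathbf 0}(x,\mathbf e)=x$; hence $\partial'=\hat m_{\geq 2,\mathbf 0}+\hat m_{\bullet,\neq\mathbf 0}$, applied to a word with an $\mathbf e$-entry, either kills it or (via $\hat m_{2,\mathbf 0}$) merges $\mathbf e$ into a neighbour, shortening the word. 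Because $h_{1,\mathbf 0}\mathbf e=0$ and $\mathcal P\mathbf e=I_{1,\mathbf 0}\Pi_{1,\mathbf 0}\mathbf e$, the coderivation $\hat h_{1,\mathbf 0}$ on a word whose first non-trivial entry is $\mathbf e$ behaves well: the $i$-th summand of $\hat h_{1,\mathbf 0}$ with $x_i=\mathbf e$ vanishes. The first step is to record a clean "unit lemma": for a bar word of the form $x_1\otimes\cdots\otimes\mathbf e\otimes\cdots\otimes x_k$, track what $(\partial'\hat h_{1,\mathbf 0})^a$ does to it, and show by induction on $a$ that the $\mathbf e$ is either removed by an $m_{2,\mathbf 0}$-merge or persists harmlessly, so that on applying $\hat\Pi_{1,\mathbf 0}$ one gets the expected behaviour for $\Pi\mathbf e$. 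The cleanest route is probably to verify directly that $\mathbf e\in\bb(C^G)$ (as the length-one word) satisfies $I\Pi\mathbf e=\mathbf e$ and that $\Pi\mathbf e$ is the length-one word $\Pi_{1,\mathbf 0}\mathbf e\in\bb(HC^G)$, then check $m^{can}(\cdots,\Pi_{1,\mathbf 0}\mathbf e,\cdots)$ against the defining formula using that $\hat h_{1,\mathbf 0}$, $\partial'$ interact with $\hat I_{1,\mathbf 0}(\Pi_{1,\mathbf 0}\mathbf e)=\mathcal P\mathbf e$ in a controlled way.

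For part (a), the plan is parallel but uses the cyclic side condition~(\ref{eq:cyclic retraction}) and the cyclicity $m^*\lozenge=0$. First I would check that $\hat h_{1,\mathbf 0}$ is "$\lozenge$-skew" in an appropriate bar sense: Eq~(\ref{eq:cyclic retraction}) says $h_{1,\mathbf 0}$ is skew-adjoint for $\langle\cdot\rangle$, and one needs the analogous statement for the extended coderivation together with the fact that $\mathcal P=I_{1,\mathbf 0}\Pi_{1,\mathbf 0}$ is self-adjoint up to the homotopy (which follows from $\Pi_{1,\mathbf 0}$, $I_{1,\mathbf 0}$ being chain maps with $\Pi_{1,\mathbf 0}I_{1,\mathbf 0}=\mathrm{id}$ and from the side conditions $\Pi_{1,\mathbf 0}h_{1,\mathbf 0}=0=h_{1,\mathbf 0}I_{1,\mathbf 0}$). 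Then, since $\partial'$ already satisfies $(\partial')^*\lozenge$-relations inherited from $m^*\lozenge=0$ and $\hat m_{1,\mathbf 0}^*\lozenge=0$, one shows term-by-term in the series $I=\sum_a(\hat h_{1,\mathbf 0}\partial')^a\hat I_{1,\mathbf 0}$ that $I^*\lozenge'=\lozenge$: the telescoping happens because each insertion of $\hat h_{1,\mathbf 0}$ contributes a skew term that pairs with the adjacent $\partial'$ to give a sign-cancelling pair, exactly as in the classical proof that the transferred pairing is preserved. Having $I^*\lozenge'=\lozenge$ and $\Pi\circ I=\mathrm{id}$, the cyclicity of $m^{can}$ with respect to the induced pairing $\bar\lozenge$ on $HC$ follows formally: $(m^{can})^*\bar\lozenge = (m^{can})^*(I^*\lozenge') = (I\circ m^{can})^*\lozenge' = (m\circ I)^*\lozenge' = I^*(m^*\lozenge') = I^*(0)=0$, using that $I$ is a twisted $A_\infty$ morphism so $I\circ m^{can}=m\circ I$.

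The main obstacle I anticipate is the bookkeeping of signs in the extended cyclic identity for $\hat h_{1,\mathbf 0}$ — verifying that the sign $(-1)^{\sum_{j<i}(\codim x_j-1)}$ appearing in the definition of $\hat h_{1,\mathbf 0}$ in Eq~(\ref{eq:extended linear retraction}), combined with the coderivation sign~(\ref{eq:graded coderivation sign}) and the Koszul signs in $\lozenge$, reassembles into the clean skew-adjointness needed for the telescoping. A secondary subtlety is that the pairing $\lozenge$ only sees length-two words, so one must be careful about how $\hat h_{1,\mathbf 0}$ and $\partial'$ change length — but this is actually helpful, since it forces most terms in the expansion to be irrelevant to $I^*\lozenge'$. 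Given the explicit statement that the uncyclic case's verification is "a standard result" cited to \cite{Huebschmann+Kadeishvili}, I would phrase the proof of Proposition~\ref{prop:unital and cyclic HPL} as: (i) a short unit lemma for the building blocks, deducing (b); (ii) a skew-adjointness lemma for $\hat h_{1,\mathbf 0}$, deducing $I^*\lozenge'=\lozenge$; (iii) the formal deduction of cyclicity of $m^{can}$ above; relegating the remaining sign checks to a routine computation.
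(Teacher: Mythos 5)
Your treatment of part (a) is essentially sound: the formal deduction $(m^{can})^{*}\lozenge^{H}=(m^{can})^{*}I^{*}\lozenge=(m\circ I)^{*}\lozenge=I^{*}(m^{*}\lozenge)=0$ is exactly the paper's argument, and you correctly identify the two ingredients (Eq (\ref{eq:cyclic retraction}) and the side conditions (\ref{eq:side conditions})) needed for $I^{*}\lozenge=\lozenge^{H}$. However, the mechanism you propose there --- a ``telescoping'' in which each $\hat{h}_{1,\mathbf{0}}$ pairs with an adjacent $\partial'$ to produce sign-cancelling pairs --- is not what happens, and is more complicated than necessary. Since $\lozenge$ only sees $\pi_{1}\otimes\pi_{1}$ of the two tensor factors produced by $\Delta$, every summand of $I^{*}\lozenge$ with $a_{1}>0$ has the form $\lozenge\left(h_{1,\mathbf{0}}\sigma\otimes\pi_{1}\left(\hat{h}_{1,\mathbf{0}}\partial'\right)^{a_{2}}\hat{I}_{1,\mathbf{0}}(\cdots)\right)$; skew-adjointness moves $h_{1,\mathbf{0}}$ to the second slot, where it meets either $h_{1,\mathbf{0}}(\cdots)$ or $I_{1,\mathbf{0}}(\cdots)$ and dies outright by $h_{1,\mathbf{0}}^{2}=0$ or $h_{1,\mathbf{0}}I_{1,\mathbf{0}}=0$. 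Each offending term vanishes individually; no cancellation between terms is needed, and no property of $\partial'$ enters.

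Part (b) contains a genuine gap. Your claim that $\partial'$ applied to a word containing $\mathbf{e}$ ``either kills it or merges $\mathbf{e}$ into a neighbour'' so that $\mathbf{e}$ is ``removed \dots or persists harmlessly'' is not correct as a proof mechanism: the merge via $\hat{m}_{2,\mathbf{0}}$ produces a generically \emph{nonzero} shorter word, and these surviving terms would spoil unitality if left unaccounted for. The actual argument requires showing that for $\mathbf{e}$ in the interior of a word the two absorptions $m_{2,\mathbf{0}}^{(r-2)}\mathbf{x}$ and $m_{2,\mathbf{0}}^{(r-1)}\mathbf{x}$ (merging $\mathbf{e}$ with its left and right neighbours) carry opposite signs and cancel in pairs, by the unit relation (\ref{eq:unit relation}); and that the unpaired boundary contributions, where $\mathbf{e}$ sits at the first or last position, vanish by a separate case analysis using $h_{1,\mathbf{0}}\mathbf{e}=0$, $h_{1,\mathbf{0}}^{2}=0$, $h_{1,\mathbf{0}}\mathcal{P}=0$, $\mathcal{P}h_{1,\mathbf{0}}=0$ and $\Pi_{1,\mathbf{0}}h_{1,\mathbf{0}}=0$. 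Note also that restricting attention to the length-one input $\mathbf{e}$ does not suffice: because the tame differential has nonzero components $m_{0,\beta}$ for $\beta\neq\mathbf{0}$, the operator $\partial'$ lengthens words, so even verifying $I\left(\Pi_{1,\mathbf{0}}\mathbf{e}\right)=\mathbf{e}$ and the unit axioms for $m^{can}$ forces you into the general-word analysis with the pairwise cancellation. Your induction on $a$ could be organized to do this, but as written the key cancellation idea is absent.
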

\begin{rem}
Let $f:\left(\bb,m,\lozenge\right)\to\left(\bb',m',\lozenge'\right)$
be a cyclic morphism. $f^{*}\lozenge'=\lozenge$ means, in particular,
that $\left\langle f_{1,\mathbf{0}}x,f_{1,\mathbf{0}}y\right\rangle '=\left\langle x,y\right\rangle $.
Since $\left\langle \cdot\right\rangle $ is non-degenerate, this
implies that $f_{1,0}$ must be injective. It follows that $\Pi$
will \emph{not} be cyclic, unless $m_{1,\mathbf{0}}=0$. Even so,
we should think of $\Pi_{1,\mathbf{0}}$ as inducing a cyclic equivalence
when a cyclic retraction is used. Cf. Remark 8.4 in \cite{fukayacyclic}.\end{rem}
\begin{proof}
[Proof of Proposition \ref{prop:unital and cyclic HPL}]We prove (a).
Let $\lozenge^{H}$ denote the pairing cocycle for $\bb\left(H\left(C^{G},m_{1,\mathbf{0}}\right)\right)$.
First we show $I$ is a cyclic morphism, i.e. that $I^{*}\lozenge=\lozenge^{H}$.
We choose some $x_{1},...,x_{k}\in C$, and compute:

\begin{multline*}
\left(I^{*}\lozenge\right)\left(x_{1},\cdots,x_{k}\right)=\\
=\sum_{a_{1},a_{2}\geq0}\sum_{k_{1}+k_{2}=k}\lozenge\left(\pi_{1}\left(\hat{h}_{1,0}\partial'\right)^{a_{1}}\hat{I}_{1,0}\left(...,x_{k_{1}}\right)\;\otimes\;\pi_{1}\left(\hat{h}_{1,0}\partial'\right)^{a_{2}}\hat{I}_{1,0}\left(x_{k_{1}+1},...\right)\right).
\end{multline*}
We now show that the summands with $a_{1}>0$ vanish. Indeed, for
some $\sigma\in C^{G}$ we can write

\begin{multline*}
\lozenge\left(h_{1,0}\sigma\otimes\left(\hat{h}_{1,0}\partial'\right)^{a_{2}}\hat{I}_{1,0}\left(x_{k_{1}+1}\otimes\cdots\otimes x_{k}\right)\right)=\\
=\pm\lozenge\left(\sigma\otimes h_{1,0}\pi_{1}\left(\hat{h}_{1,0}\partial'\right)^{a_{2}}\hat{I}_{1,0}\left(x_{k_{1}+1}\otimes\cdots\otimes x_{k}\right)\right)=0
\end{multline*}
where the last equality is by the side conditions (\ref{eq:side conditions}).
Terms with $a_{2}>0$ vanish for the same reason, which shows that
$I$ is cyclic. 

Now we show that $m^{can}$ is cyclic: 
\[
0=I^{*}m^{*}\lozenge=\left(m^{can}\right)^{*}I^{*}\lozenge=\left(m^{can}\right)^{*}\lozenge^{H}.
\]

We prove (b). Note that, since $m_{1,\mathbf{0}}\mathbf{e}=0$ and
$h_{1,\mathbf{0}}\mathbf{e}=0$, we have $I_{1,0}\Pi_{1,0}\mathbf{e}=\mathbf{e}$.
Let us check $\Pi$ is unital. Expand 
\begin{equation}
\pi_{1}\Pi=\pi_{1}\hat{\Pi}_{1,0}\sum\prod_{j=1}^{a}\left(m_{k_{j},\beta_{j}}^{\left(s_{j}\right)}\circ h_{1,0}^{\left(t_{j}\right)}\right)\label{eq:Pi expanded}
\end{equation}
where for $\mathbf{y}=y_{1}\otimes\cdots\otimes y_{l}$, $y_{i}\in C$,
the operators $m_{k,\beta}^{\left(s\right)},h_{1,\mathbf{0}}^{\left(t\right)}:\bb\left(C^{G}\right)\to\bb\left(C^{G}\right)$
(degree shift not shown) are given by 
\begin{multline*}
m_{k,\beta}^{\left(s\right)}\mathbf{y}=\\
\begin{cases}
\left(-1\right)^{\sum_{j=1}^{s}\left(\codim y_{j}-1\right)}\cdots y_{s}\otimes\lambda\left(\beta\right)m_{k,\beta}\left(y_{s+1},...,y_{s+k}\right)\otimes\cdots & \mbox{if }l\geq s+k\\
0 & \mbox{otherwise}
\end{cases}
\end{multline*}
and

\begin{multline*}
h_{1,\mathbf{0}}^{\left(t\right)}\mathbf{y}=\\
\begin{cases}
\left(-1\right)^{\sum_{j=1}^{s}\left(\codim y_{j}-1\right)}y_{1}\otimes\cdots\otimes y_{t}\otimes h_{1,\mathbf{0}}\left(y_{t+1}\right)\otimes\mathcal{P}y_{t+2}\otimes\cdots\otimes\mathcal{P}y_{l} & \mbox{if }l\geq s+1\\
0 & \mbox{otherwise}
\end{cases},
\end{multline*}
see (\ref{eq:extended linear retraction}). The sum in (\ref{eq:Pi expanded})
ranges over all $a\geq0$ and for each $a$ over $a$-tuples of quadruples
$\left(k_{j},\beta_{j},s_{j},t_{j}\right)$ with $k_{j},s_{j},t_{j}$
non-negative integers and $\beta_{j}\in G$. We use $\prod_{j=1}^{a}$
to denote the \emph{composition }of operators where the factor corresponding
to $j=1$ is applied\emph{ first.} 

We focus on the contribution of one summand of (\ref{eq:Pi expanded})
and consider 
\begin{equation}
\left(\pi_{1}\Pi_{1,\mathbf{0}}\prod_{j=1}^{a}\left(m_{k_{j},\beta_{j}}^{\left(s_{j}\right)}\circ\hat{h}_{1,0}^{\left(t_{j}\right)}\right)\right)\left(\mathbf{x}\right)\label{eq:Pi contribution}
\end{equation}
 for

\begin{equation}
\mathbf{x}=x_{1}\otimes\cdots\otimes\left(x_{r}=\mathbf{e}\right)\otimes\cdots\otimes x_{k}.\label{eq:input with a unit}
\end{equation}
Clearly, if $a=0$ we have a nonzero contribution only for $k=0$,
in which case we get $\Pi_{1,\mathbf{0}}\mathbf{e}$. We now show
all the other contributions either cancel in pairs or vanish. Clearly,
if $a\geq1$ and $\left(k_{j},\beta_{j}\right)\neq\left(2,\mathbf{0}\right)$
for all $1\leq j\leq a$ then (\ref{eq:Pi contribution}). More precisely,
let $r_{0}=r$ and, as long as $r_{j}$ is not in $\left\{ s_{j}+1,...,s_{j}+k_{j}\right\} $
define $r_{j+1}=\begin{cases}
r_{j} & \mbox{if }r_{j}\leq s_{j}\\
r_{j}-k_{j}+1 & \mbox{if }r_{j}>s_{j}+k_{j}
\end{cases}$. Note that if $r_{j}\not\in\left\{ s_{j}+1,...,s_{j}+k_{j}\right\} $
for all $1\leq j\leq a$ then (\ref{eq:Pi contribution}) vanishes
since $\Pi_{1,\mathbf{0}}\prod_{j=1}^{a}\left(m_{k_{j},\beta_{j}}^{\left(s_{j}\right)}\circ\hat{h}_{1,0}^{\left(t_{j}\right)}\right)$
has length $\geq2$. Suppose, then, that we have some $b\leq a$ with
$r_{b}\in\left\{ s_{j}+1,...,s_{j}+k_{j}\right\} $. To have a non-vanishing
contribution we must have $\left(k_{b},\beta_{b}\right)=\left(2,\mathbf{0}\right)$
and $s_{b}=r_{b}-1$ or $s_{b}=r_{b}-2$. Let us clean up the indexing,
replacing $\mathbf{x}$ with $ $$\prod_{j=1}^{b-1}\left(m_{k_{j},\beta_{j}}^{\left(s_{j}\right)}\circ\hat{h}_{1,0}^{\left(t_{j}\right)}\right)\mathbf{x}$
and $r$ with $r_{b}$, so $\mathbf{x}$ is still of the form (\ref{eq:input with a unit}).
We update $l$ accordingly and replace ${\pi_{1}\Pi_{1,\mathbf{0}}\prod_{j=1}^{a}\left(m_{k_{j},\beta_{j}}^{\left(s_{j}\right)}\circ\hat{h}_{1,0}^{\left(t_{j}\right)}\right)}$
with ${\pi_{1}\Pi_{1,\mathbf{0}}\prod_{j=1}^{a-b}\left(m_{k_{j+\left(b-1\right)},\beta_{j-\left(b-1\right)}}^{\left(s_{j+\left(b-1\right)}\right)}\circ\hat{h}_{1,0}^{\left(t_{j+\left(b-1\right)}\right)}\right)}$,
so now $\left(k_{1},\beta_{1}\right)=\left(2,\mathbf{0}\right)$ and
$s_{1}=r-1$ or $s_{1}=r-2$. We call this process \emph{bringing
$\mathbf{e}$ to the front. }

Note that if we had $r\neq1,k$, then the same is true after we bring
$\mathbf{e}$ to the front. In this case we have 

\begin{multline*}
m_{2,\mathbf{0}}^{\left(r-2\right)}\mathbf{x}+m_{2,\mathbf{0}}^{\left(r-1\right)}\mathbf{x}=\\
\left(-1\right)^{\sum_{j\leq r-2}\left(\codim x_{j}-1\right)}x_{1}\otimes\cdots\otimes m_{2,0}\left(x_{r-1}\otimes\mathbf{e}\right)\otimes x_{r+1}\otimes\cdots\otimes x_{l}\;+\\
+\;\left(-1\right)^{\sum_{j\leq r-1}\left(\codim x_{j}-1\right)}x_{1}\otimes\cdots\otimes x_{r-1}\otimes m_{2,0}\left(\mathbf{e}\otimes x_{r+1}\right)\otimes\cdots\otimes x_{l}=0,
\end{multline*}
which implies that contributions with $i\neq1,l$ can be canceled
in pairs. We complete the proof by showing contributions with $i=1$
or $i=l$ vanish. This involves checking a few cases as follows. We
again assume that we've brought $\mathbf{e}$ to the front, to simplify
the indices.

\textbf{Case 1, $i=1,s_{1}=0,a\geq2$: }We show $h_{1,\mathbf{0}}^{\left(t_{2}\right)}m_{2,\mathbf{0}}^{\left(0\right)}h_{1,\mathbf{0}}^{\left(t_{1}\right)}\mathbf{x}=0$.
If $t_{1}=t_{2}-1$ this is true because $h_{1,\mathbf{0}}^{2}=0$.
If $t_{1}=0$ we use $h_{1,\mathbf{0}}\mathbf{e}=0$. If $1\leq t_{1}\leq t_{2}-2$
we use $h_{1,\mathbf{0}}\mathcal{P}=0$. If $t_{1}\geq t_{2}$ we
use $\mathcal{P}h_{1,\mathbf{0}}=0$. 

\textbf{Case 2, $i=1,s_{1}=0,a=1$: }In this case we must have $k=2$;
$\pi_{1}\Pi_{1,0}m_{2,\mathbf{0}}^{\left(0\right)}h_{1,\mathbf{0}}^{\left(t_{1}\right)}\mathbf{x}=0$
for $t_{1}=0$ because $h_{1,\mathbf{0}}\mathbf{e}=0$ and for $t_{1}=1$
because $\Pi_{1,\mathbf{0}}h_{1,\mathbf{0}}=0$. 

\textbf{Case 3, $i=l,s_{1}=l-1,a\geq2$, and case 4, $i=l,s_{1}=l-1,a=1$:
}These are similar to the previous two cases. 

The proof that $\Pi_{1,\mathbf{0}}\mathbf{e}$ is a unit for $m^{can}$
and that $I$ is a unital morphism follow the same arguments and are
omitted. \end{proof}
\begin{rem}
\label{rem:perturbed retraction as sum over trees}By Proposition
\ref{prop:Differentials are twisted A8 algebras} the differential
\[
m^{can}=\sum_{a=0}^{\infty}\hat{\Pi}_{1,0}\partial'\left(\hat{h}_{1,0}\partial'\right)^{a}\hat{I}_{1,0}
\]
corresponds to some set of maps $\left\{ m_{k,\beta}^{can}\right\} $.
We have 
\[
m_{k,\beta}^{can}=\sum_{\Gamma\in Gr\left(k,\beta\right)}m_{\Gamma}
\]
where $Gr\left(k,\beta\right)$ is as in%
\footnote{There is a typo there, and $Gr\left(k,\beta\right)$ is written $Gr\left(\beta,k\right)$.
Compare with Eq (117) ibid.%
} Definition 9.1 of \cite{fukayacyclic}: it is a set of isomorphism
types of $G$-labeled rooted ribbon trees, which are represented by
triples $\left(T,v_{0},\beta\left(\cdot\right)\right)$ satisfying
conditions (1)-(4) at the beginning of Section 9 ibid. and Definition
9.1 (the only change is that for our discrete submonoid $G$, which
appears in condition (3), we do not assume $\mu\left(G\right)\subset2\zz$).
Note this equation is formally identical to Eq (117) of \cite{fukayacyclic}.
Indeed for $\mu\left(G\right)\subset2\zz$ the contribution $m_{\Gamma}$
is also precisely as described there. Let us briefly explain how to
see this.

Expand 
\begin{equation}
\pi_{1}m^{can}=\sum\pi_{1}\hat{\Pi}_{1,0}\left(m_{k_{a+1},\beta_{a+1}}^{\left(s_{a+1}\right)}\prod_{j=1}^{a}h_{1,\mathbf{0}}^{\left(t_{j}\right)}m_{k_{j},\beta_{j}}^{\left(s_{j}\right)}\right)\hat{I}_{1,\mathbf{0}}\label{eq:expansion of minimal model}
\end{equation}
(see the proof of Proposition \ref{prop:unital and cyclic HPL} for
an explanation of this notation) and apply each summand to $\mathbf{y}=y_{1}\otimes\cdots\otimes y_{k}$.

It is not hard to see that the side conditions (\ref{eq:side conditions})
imply that $t_{j}=s_{j}$, or else the contribution vanishes (so we
must apply each $h_{1,\mathbf{0}}$ to the output of previous $m_{k,\beta}$).
Moreover, since $\mathcal{P}h_{1,\mathbf{0}}=0$ we find that the
sequence $l_{j}-k_{j}-t_{j}$ is non-increasing (so each operation
is applied either after, or to the right of, the previous operations).
Here $l_{j}$ is the length of $\prod_{i=1}^{j}h_{1,\mathbf{0}}^{\left(t_{i}\right)}m_{k_{i},\beta_{i}}^{\left(s_{i}\right)}\mathbf{y}$,
with $l_{0}=l$. This defines an obvious map from the set of (generically)
nonzero summands in Eq (\ref{eq:expansion of minimal model}) to $G$-labeled
ribbon rooted trees.

There is a map in the other direction: embed $T$ in $\rr^{2}$ so
that $v_{0}$ has maximal height w.r.t. some linear projection $\rr^{2}\to\rr$.
Order the vertices of $T$ so that the vertices most distant from
$v_{0}$ (or ``deepest'') appear first, and vertices with the same
distance are ordered from left to right (using the planar embedding).
This determines a summand of Eq (\ref{eq:expansion of minimal model})
in an obvious way, where the order of the operations is given by the
order of the vertices of $T$. It is easy to check that these maps
define a bijection between isomorphism types of $G$-labeled rooted
ribbon trees $\Gamma$ and (generically) non-vanishing contributions
to Eq (\ref{eq:expansion of minimal model}). If $\mu\left(G\right)\subset2\zz$
this contribution is precisely $m_{\Gamma}$ of Eq (117) in \cite{fukayacyclic}. 

By Proposition \ref{prop:morphism in components} the morphism $I$
can also be decomposed as some $\left\{ I_{k,\beta}\right\} $, and
we have 
\[
I_{k,\beta}=\sum_{\Gamma\in Gr\left(k,\beta\right)}f_{\Gamma}
\]
for some $f_{\Gamma}$ which, in case $\mu\left(G\right)\subset2\zz$
are identical to $f_{\Gamma}$ of Eq (117) in \cite{fukayacyclic}.
The reasoning is essentially the same.
\end{rem}

\subsection{Constructing retractions in simple cases}

If $\left(C,d\right)$ is a dg $R$-module where $R$ is a field,
constructing a retraction of $\left(C,d\right)$ to $HC$ is a simple
matter; by choosing a splitting for the short exact sequences 
\[
0\to\ker d\to C\to\im d\to0
\]
and 
\[
0\to\im d\to\ker d\to HC\to0
\]
we obtain an internal direct sum decomposition
\begin{equation}
C=H\oplus d\left(C\right)\oplus O\label{eq:decompose C into three}
\end{equation}
where $ $$H\simeq HC$ and $d\left(C\right)\overset{\phi}{\simeq}O$
so that in these components the differential $d$ is given by 
\[
\begin{pmatrix}0 & 0 & 0\\
0 & 0 & \phi\\
0 & 0 & 0
\end{pmatrix},
\]
and then we can take $I_{1,\mathbf{0}}:H\to C$ and $\Pi_{1,\mathbf{0}}:C\to H$
to be the structure maps associated with the decomposition (\ref{eq:decompose C into three}),
and 
\[
h_{1,\mathbf{0}}=\begin{pmatrix}0 & 0 & 0\\
0 & 0 & 0\\
0 & \phi^{-1} & 0
\end{pmatrix}.
\]

Given any $\mathbf{e}\in C$ with $d\mathbf{e}=0$, it is easy to
modify the above construction so that $\mathbf{e}\in H\subset C$
in (\ref{eq:decompose C into three}), and then the retraction is
unital.

Given an antisymmetric, non-degenerate pairing $\left\langle \cdot\right\rangle :C\otimes_{R}C\to R\left[-p,q\right]$,
if the decomposition (\ref{eq:decompose C into three}) is an orthogonal
direct sum decomposition, then the associated retraction is cyclic. 

In the situation of Example \ref{exa:De Rham DGA} we can use the
Hodge-De Rham decomposition, see the discussion in \cite{fukayacyclic}
of the orientable case, the non-orientable case is the same.

\section{\label{sec:Equivariant Cohomology}Equivariant cohomology}

In this section we introduce the Cartan-Weil complex associated with
an equivariant manifold, and explain how to extend various constructions
to this equivariant setting. Throughout this section, we will not
use the $\grading$-grading. Some objects will be $\zz$-graded, and
local systems will be explicitly indicated.

\subsection{\label{sub:basic definitions}Basic definitions}

Fix a positive integer $n'$ and let $\mathbb{T}=\rr^{n'}/\zz^{n'}$
denote the torus group.  

Suppose now $X$ is a smooth $\mathbb{T}$-manifold, possibly with
boundary. Let $\left\{ \xi_{j}\right\} _{j=1}^{n'}$ denote the $n'$
commuting vector fields on $X$ with 1-periodic flows, which correspond
to the standard basis of $\rr^{n'}=\mbox{Lie}\left(\tb\right)$ under
the $\tb$-action. Suppose $\lc$ is a $\tb$-equivariant local system
on $X$. We now introduce \emph{the Cartan-Weil complex} $C^{CW}\left(X;\lc\right)$
which is a graded deformation of the usual De Rham complex of $\lc$-valued
differential forms on $X$. We will adopt the viewpoint that the cohomology
of the Cartan-Weil complex \emph{is }the equivariant cohomology of
$X$, though it should really be seen as a way to compute the cohomology
of the homotopy quotient of $X$ by the $\tb$ action. We refer the
reader to \cite{AB} and \cite{supersymmetry+equivariance} for more
comprehensive discussions of this rich subject. Our treatment will
be focused on the applications we have in mind.

We denote by $\lc\left[\alpha_{1},...,\alpha_{n'}\right]:=\lc\otimes_{\cc}\cc\left[\alpha_{1},...,\alpha_{n'}\right]$
the locally constant sheaf of $\zz$-graded abelian groups where $\deg\alpha_{i}=2$.
We will often denote $\cc\left[\vec{\alpha}\right]=\cc\left[\alpha_{1},...,\alpha_{n'}\right]$,
similarly for $\lc\left[\vec{\alpha}\right]$, etc. As an $R$-module,
the Cartan-Weil complex is 
\[
C^{CW}\left(X;\lc\right)=\Omega\left(X;\lc\left[\vec{\alpha}\right]\right)^{\mathbb{T}},
\]
the abelian group of $\tb$-invariant differential forms on $X$ with
values in $\lc\left[\vec{\alpha}\right]$. It is $\zz$-graded by
the sum of the De Rham and algebraic grading of $\lc\left[\vec{\alpha}\right]$.
We equip it with the degree one $\cc\left[\vec{\alpha}\right]$-linear
differential $D:C^{CW}\left(C;\lc\right)\to C^{CW}\left(C;\lc\right)\left[1\right]$
given by  
\[
D=d-\sum_{j=1}^{n'}\alpha_{j}\iota_{\xi_{j}},
\]
where $\iota_{\xi_{j}}$ denotes contraction with the vector field
$\xi_{j}$ on $X$. It is easy to see that $D^{2}=0$ on $\tb$-invariant
forms. If $\lc_{1},\lc_{2}$ are two equivariant local systems on
$X$ then the usual wedge product of forms preserves $\tb$-invariance
and so defines a map
\[
\wedge:C^{CW}\left(X;\lc_{1}\right)\otimes_{R}C^{CW}\left(X;\lc_{2}\right)\to C^{CW}\left(X;\lc_{1}\otimes_{\cc}\lc_{2}\right)
\]
which satisfies the graded Leibniz rule and is associative in the
obvious sense. It corresponds to the cup-product in equivariant cohomology.
If $f:Y\to X$ is a $\tb$-equivariant map the usual pullback map
of forms respects the invariant forms so we obtain a map 
\[
f^{*}:C^{CW}\left(Y;\lc\right)\to C^{CW}\left(X;f^{*}\lc\right).
\]
If ${\left(f:X\to Y,\kappa:Or\left(TX\right)\otimes f^{*}Or\left(TY\right)^{\vee}\to\kk^{\vee}\otimes f^{*}\lc\right)}$
is an oriented proper submersion (see Definition \ref{def:oriented submersion})
where $\kk$ and $\lc$ are \emph{$\tb$-equivariant} local systems
on $X$ and $Y$, respectively, then have a pushforward map 
\[
f_{!}^{\kappa}:C^{CW}\left(X;\kk\right)\to C^{CW}\left(Y;\lc\right)\left[\dim Y-\dim X\right]
\]
defined by integration on the fiber. See the appendix \ref{sec:Appendix.-Orientation-convention}
for our conventions regarding orientations and the pushforward. An
important special case is pushforward to a point, which is defined
when $X$ is compact:
\begin{equation}
\int:C^{CW}\left(X;Or\left(TX\right)^{\vee}\right)\to R\left[-\dim X\right].\label{eq:equivariant integration}
\end{equation}

\begin{rem}
\label{rem:ls equivariant is a property}If $X$ is a topological
space with a $\tb$-action, and $\lc$ is a local system on $X$,
then being $\tb$-equivariant is a \emph{property} and does not involve
additional choices (unlike lifting the action to a vector bundle over
$X$, for example). To see this, consider the maps $s,t:\tb\times X\to X$
where $s$ is the projection and $t$ is the action map. By definition,
if $\lc$ is $\tb$-equivariant there exists an isomorphism $\phi:s^{*}\lc\simeq t^{*}\lc$,
which we can think of as a nonzero global section $\tilde{\phi}$
of $\mathbf{Local}_{\tb\times X}\left(s^{*}\lc,t^{*}\lc\right)$,
the internal hom object in the category of local systems on $\tb\times X$.
$\phi$ is required to satisfy the identity axiom, which specifies
the value of $\tilde{\phi}$ at $\mathbf{e}\times X$. Since $\mathbf{e}\times X$
intersects every connected component of $\tb\times X$ we find that
this specifies the value of $\tilde{\phi}$ everywhere. It follows
that a $\tb$-action, if it exists, is unique. 

This also explains why we did not need to assume that the local system
isomorphism $\kappa$ is $\tb$-equivariant: \emph{any} isomorphism
between $\tb$-equivariant local systems $\lc_{1},\lc_{2}$ on $X$
is automatically $\tb$-equivariant. 
\end{rem}

\subsection{Equivariant cohomology in the separated case}
\begin{defn}
\label{def:separated}Let $\lc$ be a local system on a manifold $X$.
$X$ will be called \emph{$\lc$-separated} if $H^{\bullet}\left(X;\lc\right)$
is concentrated either only in even or only in odd degrees.\end{defn}
\begin{prop}
\label{prop:equivariant cohomology of separated manifolds}If $X$
is a $\tb$-manifold and $\lc$ is an equivariant local system on
$X$ such that $X$ is $\lc$-separated then there's an isomorphism
of $\ca$-modules $\Phi:\ca\otimes H\left(X;\lc\right)\simeq H^{CW}\left(X;\lc\right)$\end{prop}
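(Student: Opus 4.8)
The plan is to build the isomorphism $\Phi$ by a spectral-sequence degeneration argument, exploiting the $\lc$-separated hypothesis to force all higher differentials to vanish. Recall that the Cartan-Weil complex carries a filtration by powers of the ideal $\left(\alpha_{1},\dots,\alpha_{n'}\right)\subset\ca$; concretely, set $F^{p}C^{CW}\left(X;\lc\right)=\bigoplus_{|I|\geq p}\alpha^{I}\cdot\Omega\left(X;\lc\right)^{\tb}$, where the sum is over multi-indices. The differential $D=d-\sum_{j}\alpha_{j}\iota_{\xi_{j}}$ respects this filtration, and the associated graded differential is just $d$ (the $\tb$-invariant de Rham differential), since the second term raises $\alpha$-degree. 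So the associated spectral sequence has $E_{1}$-page $\ca\otimes H\left(\Omega\left(X;\lc\right)^{\tb},d\right)$. I would first note that $H\left(\Omega\left(X;\lc\right)^{\tb},d\right)\cong H\left(X;\lc\right)$, because averaging over the compact group $\tb$ gives a chain homotopy equivalence from the full de Rham complex with values in $\lc$ to its $\tb$-invariant subcomplex (this uses that $\lc$ is $\tb$-equivariant, so the averaging makes sense, and that $\tb$ is connected so the averaged operator is homotopic to the identity).

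Next comes the degeneration step, which is where the $\lc$-separated hypothesis does its work. The spectral sequence differential $d_{r}$ shifts the total degree by $+1$ and shifts the filtration degree by $+r$; in terms of the bigrading $(p,q)$ with $p$ the $\alpha$-power and $q$ the de Rham degree, $d_{r}$ maps $(p,q)\mapsto(p+r,q-r+1)$. Since $\deg\alpha_{i}=2$, the total degree is $2p+q$, and $d_{r}$ raises total degree by $1$. But $E_{1}=\ca\otimes H\left(X;\lc\right)$ is concentrated in total degrees of a single parity — if $H\left(X;\lc\right)$ is all even (resp. all odd), then $2p+q$ is always even (resp. always odd) on $E_{1}$. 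A degree-$1$ map on a complex concentrated in a single parity of degree must be zero. Hence $d_{r}=0$ for all $r\geq1$, the spectral sequence degenerates at $E_{1}$, and $E_{\infty}=\ca\otimes H\left(X;\lc\right)$ as a bigraded $\ca$-module.

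From degeneration I would extract the actual isomorphism $\Phi$, not merely an isomorphism of associated graded objects. The point is that the filtration $F^{\bullet}$ is exhaustive and, on each fixed total degree, $F^{p}$ vanishes for $p$ large enough (since $\Omega\left(X;\lc\right)$ lives in bounded de Rham degree, a fixed total degree $2p+q$ forces $p$ bounded). So on each total degree the filtration is finite, $H^{CW}\left(X;\lc\right)$ is a finite iterated extension of the $E_{\infty}$-pieces, and one builds $\Phi$ by choosing $\ca$-module splittings of these extensions; because $\ca=\cc[\vec\alpha]$ is a polynomial ring and the $E_{\infty}$ terms are free $\ca$-modules ($\ca\otimes H\left(X;\lc\right)$ with $H\left(X;\lc\right)$ a $\cc$-vector space), the extensions split as $\ca$-modules, giving the desired $\ca$-module isomorphism. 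Concretely one can take $\Phi$ to send a de Rham cohomology class, represented by a $\tb$-invariant closed form $\omega$, to the class of a $D$-closed lift $\omega+(\text{higher }\alpha\text{-order corrections})$, the corrections existing precisely by the vanishing of the obstructions $d_{r}[\omega]$.

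I expect the main obstacle to be the bookkeeping in the last paragraph: showing that $\Phi$ can be chosen $\ca$-linear and that the higher-order corrections assemble coherently across all cohomology classes. The degeneration itself is essentially forced by the parity argument, and the identification of the $E_{1}$-page via averaging over $\tb$ is standard, but turning "$E_{\infty}=\ca\otimes H\left(X;\lc\right)$" into a genuine $\ca$-module splitting of $H^{CW}$ requires care — one wants to choose lifts of a $\cc$-basis of $H\left(X;\lc\right)$ to $D$-closed Cartan-Weil classes and then check that these freely generate $H^{CW}\left(X;\lc\right)$ over $\ca$, which one verifies by the filtration argument together with a downward induction on filtration level (or equivalently a Nakayama-type argument over the graded ring $\ca$).
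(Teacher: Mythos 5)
Your proposal is correct and follows essentially the same route as the paper: filter the Cartan--Weil complex by powers of $\mathfrak{m}=\left(\alpha_{1},\dots,\alpha_{n'}\right)$, observe that the resulting spectral sequence has a page isomorphic to $\ca\otimes H\left(X;\lc\right)$, use the parity hypothesis to kill all higher differentials, and then produce $\Phi$ by lifting a basis of $H\left(X;\lc\right)$ to $D$-closed equivariant forms and checking the resulting $\ca$-linear map is an isomorphism on associated graded pieces. The only differences are cosmetic (page-indexing conventions, and your finite-filtration-per-degree remark in place of the paper's completeness remark).
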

\begin{proof}
Let $\mathfrak{m}\triangleleft\cc\left[\vec{\alpha}\right]$ denote
the maximal ideal generated by $\alpha_{1},...,\alpha_{n'}$. Consider
the filtration of $C^{CW}\left(X;\lc\right)$ by the dg $\cc\left[\vec{\alpha}\right]$-submodules
\[
C^{CW}\left(L\right)=\mf^{0}C^{CW}\left(X;\lc\right)\supset\mf^{1}C^{CW}\left(X;\lc\right)\supset\cdots
\]
Let $\left(E_{1}=\bigoplus_{i\geq0}\mf^{i}C^{CW}\left(X;\lc\right)/\mf^{i+1}C^{CW}\left(X;\lc\right),d_{1}\right),\,\left(E_{2},d_{2}\right),\,...$
denote the corresponding spectral sequence. All the terms are $\zz$-graded
$\cc\left[\vec{\alpha}\right]$-modules, where the $\cc\left[\vec{\alpha}\right]$-linear
differential $d_{i}$ has degree $1$. We have $E_{1}\simeq\ca\otimes C\left(L;\lc\right)^{\tb}$
and $d_{1}=\mbox{id}\otimes d$, so $E_{2}\simeq\cc\left[\vec{\alpha}\right]\otimes H\left(X;\lc\right)$,
which is concentrated either in even or odd (total) degree. It follows
that $d_{2},d_{3},...$ must all vanish and the spectral sequence
degenerates at the second page. This means that there's an isomorphism
of $\cc\left[\vec{\alpha}\right]$-modules 
\begin{equation}
\cc\left[\vec{\alpha}\right]\otimes H\left(X;\lc\right)\simeq\bigoplus_{i\geq0}\mf^{i}H^{CW}\left(X;\lc\right)/\mf^{i+1}H^{CW}\left(X;\lc\right).\label{eq:associated graded isomorphism-1}
\end{equation}
We now show that there's an isomorphism of $\ca$-modules ${\ca\otimes H\left(X;\lc\right)\simeq H^{CW}\left(X;\lc\right)}$.
It follows from Eq (\ref{eq:associated graded isomorphism-1}) that
the map $q:H^{CW}\left(X;\lc\right)\to H\left(X;\lc\right)$ obtained
by setting $\alpha_{j}=0$ for $1\leq j\leq n'$, is surjective. By
choosing a basis for $H\left(X;\lc\right)$, choosing $q$-preimages
in $H^{CW}\left(X;\lc\right)$, and then choosing equivariant $D$-closed
forms representing the corresponding cohomology classes, we define
a $\cc$-linear map of complexes ${\sigma:\left(H\left(X;\lc\right),0\right)\to\left(C^{CW}\left(X;\lc\right),D\right)}$.
Let ${\mbox{id}_{\ca}\otimes\sigma:\left(\ca\otimes H\left(X;\lc\right),0\right)\to\left(C^{CW}\left(X;\lc\right),D\right)}$
denote the extension of scalars. We claim 
\[
\Phi:=H\left(\mbox{id}_{\ca}\otimes\sigma\right):\ca\otimes H\left(X;\lc\right)\to H^{CW}\left(X;\lc\right)
\]
is an isomorphism. Since it is a $\ca$-module map it respects the
$\mathfrak{m}$-adic filtrations, and so (since $\bigcap_{i\geq0}\mf^{i}=0$
and the filtration on $C^{CW}\left(X;\lc\right)$ is complete at every
degree) it suffices to check that the induced map of the associated
graded modules 
\[
\bigoplus_{i\geq0}\left(\mf^{i}\otimes H\left(X;\lc\right)\right)/\left(\mf^{i+1}\otimes H\left(X;\lc\right)\right)\to\bigoplus_{i\geq0}\mf^{i}H^{CW}\left(X;\lc\right)/\mf^{i+1}H^{CW}\left(X;\lc\right)
\]
is an isomorphism. For this it suffices to check that the induced
map on the $E_{2}$ page is an isomorphism which is obvious by construction.\end{proof}
\begin{cor}
\label{cor:equivariant Kunneth}(equivariant Künneth formula) Suppose
$X_{1},X_{2}$ are $\tb$-manifolds and for $i=1,2$ we have an equivariant
local system $\lc_{i}$ on $X_{i}$ such that $X_{i}$ is $\lc_{i}$-separated.
Then we have the following isomorphisms of $\ca$-modules:

\begin{multline*}
H^{CW}\left(X_{1}\times X_{2};\lc_{1}\boxtimes\lc_{2}\right)\simeq\ca\otimes H\left(X_{1};\lc_{1}\right)\otimes H\left(X_{2};\lc_{2}\right)\simeq\\
\simeq H^{CW}\left(X_{1};\lc_{1}\right)\otimes_{\ca}H^{CW}\left(X_{2};\lc_{2}\right)
\end{multline*}

\end{cor}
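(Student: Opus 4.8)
The plan is to deduce the statement from Proposition~\ref{prop:equivariant cohomology of separated manifolds}, applied in turn to $X_1$, $X_2$, and $X_1\times X_2$ (with the diagonal $\tb$-action), together with the ordinary non-equivariant Künneth isomorphism for local systems. The first thing to check is that $X_1\times X_2$ is $\lc_1\boxtimes\lc_2$-separated, so that the Proposition applies to it. Note that $\lc_1\boxtimes\lc_2=p_1^*\lc_1\otimes_\cc p_2^*\lc_2$ is a $\tb$-equivariant local system for the diagonal action, since each projection $p_i\colon X_1\times X_2\to X_i$ is $\tb$-equivariant and pullbacks of equivariant local systems are equivariant (cf.\ Remark~\ref{rem:ls equivariant is a property}). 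If each $H^\bullet(X_i;\lc_i)$ is concentrated in a single parity of degrees, then the classical Künneth isomorphism $H(X_1\times X_2;\lc_1\boxtimes\lc_2)\simeq H(X_1;\lc_1)\otimes_\cc H(X_2;\lc_2)$ shows the same holds for $H^\bullet(X_1\times X_2;\lc_1\boxtimes\lc_2)$; hence $X_1\times X_2$ is $\lc_1\boxtimes\lc_2$-separated.

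With this in hand, applying Proposition~\ref{prop:equivariant cohomology of separated manifolds} to $X_1\times X_2$ and combining with classical Künneth yields the first asserted isomorphism of $\ca$-modules,
\[
H^{CW}(X_1\times X_2;\lc_1\boxtimes\lc_2)\simeq\ca\otimes H(X_1\times X_2;\lc_1\boxtimes\lc_2)\simeq\ca\otimes H(X_1;\lc_1)\otimes H(X_2;\lc_2),
\]
where unadorned tensor products are over $\cc$. For the second isomorphism I would apply the Proposition to each factor to get $\ca$-module isomorphisms $H^{CW}(X_i;\lc_i)\simeq\ca\otimes H(X_i;\lc_i)$, and then use the elementary fact that extension of scalars is monoidal, $(\ca\otimes_\cc V)\otimes_\ca(\ca\otimes_\cc W)\simeq\ca\otimes_\cc V\otimes_\cc W$, with $V=H(X_1;\lc_1)$ and $W=H(X_2;\lc_2)$, to obtain
\[
H^{CW}(X_1;\lc_1)\otimes_\ca H^{CW}(X_2;\lc_2)\simeq\ca\otimes H(X_1;\lc_1)\otimes H(X_2;\lc_2).
\]

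I do not expect a genuine obstacle: the only things requiring care are the parity bookkeeping in the separatedness check and the fact that the corollary asks only for isomorphisms of $\ca$-modules, so the argument above suffices. If one wanted the sharper statement that the composite isomorphism is induced by the external wedge product $C^{CW}(X_1;\lc_1)\otimes_\ca C^{CW}(X_2;\lc_2)\to C^{CW}(X_1\times X_2;\lc_1\boxtimes\lc_2)$, one could instead rerun the $\mathfrak{m}$-adic spectral sequence argument from the proof of Proposition~\ref{prop:equivariant cohomology of separated manifolds} with this map as the comparison morphism, invoking classical Künneth to see it is an isomorphism on $E_2$ and hence, by completeness of the filtration and $\bigcap_i\mathfrak{m}^i=0$, an isomorphism on cohomology.
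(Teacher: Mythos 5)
Your proposal is correct and follows essentially the same route as the paper: verify separatedness of $X_1\times X_2$ via the classical K\"unneth formula, then apply Proposition \ref{prop:equivariant cohomology of separated manifolds} to the product and to each factor, concluding with the base-change identity $\left(\ca\otimes V\right)\otimes_{\ca}\left(\ca\otimes W\right)\simeq\ca\otimes V\otimes W$. Your added remarks on the parity bookkeeping and on upgrading to the wedge-product comparison map are fine but not needed for the statement as claimed.
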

We use the notation $\lc_{1}\boxtimes\lc_{2}:=pr_{1}^{*}\lc_{1}\otimes pr_{2}^{*}\lc_{2}$
for $pr_{i}:X_{1}\times X_{2}\to X_{i}$ the projection.
\begin{proof}
We find that $X_{1}\times X_{2}$ is $\lc_{1}\boxtimes\lc_{2}$ separated,
by the usual Künneth formula. So by Proposition \ref{prop:equivariant cohomology of separated manifolds}
and the Künneth formula again we have isomorphisms of $\ca$-modules:
\begin{multline*}
H^{CW}\left(X_{1}\times X_{2};\lc_{1}\boxtimes\lc_{2}\right)\simeq\ca\otimes H\left(X_{1}\times X_{1};\lc_{1}\boxtimes\lc_{2}\right)\simeq\\
\ca\otimes H\left(X_{1};\lc_{1}\right)\otimes H\left(X_{2};\lc_{2}\right)\simeq\left(\ca\otimes H\left(X_{1};\lc_{1}\right)\right)\otimes_{\ca}\left(\ca\otimes H\left(X_{2};\lc_{2}\right)\right)\simeq\\
\simeq H^{CW}\left(X_{1};\lc_{1}\right)\otimes_{\ca}H^{CW}\left(X_{2};\lc_{2}\right)
\end{multline*}

\end{proof}

\subsection{\label{sub:equiv ang and euler forms}Equivariant angular and Euler
forms}

The remainder of this section is devoted to extending some ideas from
Bott and Tu's book \cite{bott+tu} to the equivariant setting. 

Let $E\overset{\pi}{\longrightarrow}B$ be a smooth $\mathbb{T}$-equivariant
rank $n$ vector bundle. Recall $Or\left(E\right)$ denotes the corresponding
equivariant orientation local system on $B$ (see $\S$\ref{sub:conventions}).
Fix an auxiliary $\tb$-invariant bundle metric, and denote by $i_{S}:S\left(E\right)\hookrightarrow E$
the $\tb$-submanifold consisting of unit vectors in $E$. The map
$\pi_{S}=\pi\circ i_{S}:S\left(E\right)\to B$ makes this a $\tb$-equivariant
sphere bundle over $B$. Whenever we discuss the sphere bundle associated
with a vector bundle, we'll assume that an invariant metric is fixed
on $E$. The choice of metric is immaterial: if $\phi:E\simeq E'$
is an equivariant isomorphism of vector-bundles and we fix some invariants
metrics on $E$ and $E'$, then there exists an equivariant isometry
$\tilde{\phi}:E\to E'$ (``normalize'' $\phi$).

We define an equivariant local system isomorphism 
\[
{\kappa:Or\left(TS\left(E\right)\right)\otimes\pi_{S}^{*}Or\left(TB\right)^{\vee}\simeq\left(\pi_{S}^{*}Or\left(E\right)\right)}.
\]
Let $\underline{\rr}\to i_{S}^{*}TE$ be the bundle map corresponding
to the outward normal vector, where $\underline{\rr}$ denotes the
trivial line bundle on $S\left(E\right)$. We obtain equivariant
short exact sequences of vector bundles:

\begin{gather}
0\to i_{S}^{*}\pi^{*}E\to i_{S}^{*}TE\to i_{S}^{*}\pi^{*}TB\to0\nonumber \\
0\to\underline{\rr}\to i_{S}^{*}TE\to TS\left(E\right)\to0\label{eq:orienting sphere}
\end{gather}
where in the second line we map $\underline{\rr}$ to an outward pointing
radial vector. These sequences define $\kappa$, see $\S$\ref{sub:2 of 3 orientation}.
\begin{defn}
\emph{\label{def:equivariant angular form}An equivariant angular
form for $E$} is a form ${\phi\in\Omega\left(S\left(E\right);\left(\pi_{S}^{*}Or\left(E\right)^{\vee}\right)\left[\vec{\alpha}\right]\right)^{\mathbb{T}}}$
of degree $\left(n-1\right)$ such that 

(a) $\pi_{S!}^{\kappa}\left(\phi\right)=1$ and

(b) $D\phi=-\pi^{*}e$ for some form $e\in\Omega\left(B;Or\left(E\right)^{\vee}\left[\vec{\alpha}\right]\right)^{\mathbb{T}}$. 
\end{defn}
The degree $n$ form $e$ is called \emph{the Euler form associated
with $\phi$}; since $\pi$ is submersive it is uniquely determined
by condition (b). See also pg.113 of \cite{bott+tu}.
\begin{prop}
\label{prop:equiv ang form exists}Let $E\overset{\pi}{\longrightarrow}B$
be a $\tb$-equivariant vector bundle. An equivariant angular form
for $\pi$ exists.\end{prop}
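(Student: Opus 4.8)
The plan is to obtain $\phi$ by transgressing an equivariant Thom form of $E$ down to the sphere bundle, following the classical picture of Bott and Tu \cite{bott+tu} with $d$ replaced by $D$ throughout. First I would fix a $\tb$-invariant fibre metric and a $\tb$-invariant metric connection $\nabla$ on $E$, both of which exist after averaging an arbitrary choice over the compact group $\tb$. The equivariant Mathai--Quillen construction applied to $\nabla$ then yields a form $\Phi\in\Omega\big(E;(\pi^{*}Or(E)^{\vee})[\vec{\alpha}]\big)^{\tb}$ of degree $n$ with compact vertical support, which I may take supported inside the open unit disc bundle, satisfying $D\Phi=0$ and having fibrewise integral $1$; it represents the equivariant Thom class. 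Setting $e:=\iota^{*}\Phi$, where $\iota:B\hookrightarrow E$ is the zero section, produces a $D$-closed form $e\in\Omega\big(B;Or(E)^{\vee}[\vec{\alpha}]\big)^{\tb}$ of degree $n$, which will be the Euler form of condition (b).

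Next, since $\pi\iota=\mathrm{id}_{B}$ and $\iota\pi$ is $\tb$-equivariantly homotopic to $\mathrm{id}_{E}$ through the fibrewise scalings $v\mapsto tv$, the forms $\pi^{*}e=(\iota\pi)^{*}\Phi$ and $\Phi$ are equivariantly cohomologous on $E$: using the equivariant homotopy formula together with $D\Phi=0$, there is $\lambda\in\Omega\big(E;(\pi^{*}Or(E)^{\vee})[\vec{\alpha}]\big)^{\tb}$ of degree $n-1$ with $\pi^{*}e-\Phi=D\lambda$, and I may take $\lambda$ to be $\tb$-invariant by averaging (averaging commutes with $D$ and fixes the already-invariant form $\pi^{*}e-\Phi$). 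Restricting along $i_{S}:S(E)\hookrightarrow E$, which lies outside the support of $\Phi$, gives $\pi_{S}^{*}e=D(i_{S}^{*}\lambda)$, so that $\phi:=-\,i_{S}^{*}\lambda$ is a $\tb$-invariant form of degree $n-1$ valued in $(\pi_{S}^{*}Or(E)^{\vee})[\vec{\alpha}]$ with $D\phi=-\pi_{S}^{*}e$; this is condition (b) of Definition \ref{def:equivariant angular form}. For condition (a) I would push $\pi^{*}e-\Phi=D\lambda$ forward along the fibres of the closed unit disc bundle $\bar{D}(E)\to B$: the pushforward of $\pi^{*}e$ vanishes for degree reasons (projection formula, applied to a positive-dimensional disc fibre), the fibre integral of $\Phi$ equals $1$, and the equivariant Stokes theorem identifies the fibre integral of $D\lambda$ over $\bar{D}(E)$ with, up to sign, the fibre integral of $\lambda$ over the boundary $\partial\bar{D}(E)=S(E)$, the extra $-\sum_{j}\alpha_{j}\iota_{\xi_{j}}$ term of $D$ contributing nothing in total degree $0$. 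Comparing the three terms gives $\pi_{S!}^{\kappa}\phi=1$ once the signs in the orientation $\kappa$ are matched, which is condition (a).

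The step I expect to be the main obstacle is the construction of $\Phi$ itself, with all of: $D$-closedness, compact vertical support inside a disc bundle, fibrewise integral $1$, and $\tb$-invariance; a self-contained treatment amounts to the equivariant Chern--Weil bookkeeping behind the Mathai--Quillen form (or, alternatively, writing out the classical angular form in local trivialisations of $E$, patching via a $\tb$-invariant partition of unity on $B$, and then averaging). I note that condition (b) by itself is soft: $\pi_{S}^{*}E$ carries the tautological nowhere-zero $\tb$-equivariant section $v\mapsto v$, so $\pi_{S}^{*}e_{\tb}(E)=e_{\tb}(\pi_{S}^{*}E)=0$, whence any $D$-closed representative of the equivariant Euler class pulls back to a $D$-exact form on $S(E)$; but this argument does not pin down the fibre integral, so some explicit input such as $\Phi$ (or a local computation) seems unavoidable for (a).
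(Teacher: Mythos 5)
Your argument is essentially correct, but it runs in the opposite direction from the paper's. The paper constructs $\phi$ directly by an order-by-order induction in the polynomial degree of $\vec{\alpha}$: it starts from the non-equivariant Bott--Tu angular and Euler forms $\phi_{\mathbf{0}},e_{\mathbf{0}}$ (averaged to be $\tb$-invariant), writes the conditions of Definition \ref{def:equivariant angular form} as a family of equations $(\mbox{a}_{J})$ on the coefficients $\phi_{J},e_{J}$, and solves them inductively on $\left|J\right|$ using exactness of the Gysin sequence of $S\left(E\right)\to B$ in the relevant degrees. You instead take the equivariant (Mathai--Quillen) Thom form $\Phi$ as the primitive object, set $e=\iota^{*}\Phi$, transgress via the equivariant homotopy formula for the fibrewise scaling to get $\pi^{*}e-\Phi=D\lambda$, and restrict to $S\left(E\right)$; condition (a) then follows from fibre integration over the closed disc bundle together with equivariant Stokes. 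Both routes are sound. What yours buys is a short, conceptually transparent transgression argument; what it costs is the equivariant Thom form itself, which is exactly the piece of machinery the paper is trying \emph{not} to presuppose --- note that in $\S$4.4.2 the paper \emph{defines} its equivariant Thom form from the angular form via Eq.\ (\ref{eq:thom defining equation}), so its logical order is angular form first, Thom form second. You correctly flag that the construction of $\Phi$ (with $D$-closedness, compact vertical support, invariance, fibre integral $1$, and $Or\left(E\right)^{\vee}$-twisted coefficients in the non-orientable case) is the real content; if you import it from Mathai--Quillen or from Guillemin--Sternberg there is no circularity, but a self-contained treatment would have to supply it, at which point the total effort is comparable to the paper's Gysin induction. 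Two small points to tighten: the vanishing of $D\pi_{D!}\lambda$ in your Stokes computation should be justified by the observation that $\pi_{D!}\lambda$ has total degree $-1$ and the Cartan--Weil complex is non-negatively graded; and the final sign in $\pi_{S!}^{\kappa}\phi=1$ genuinely depends on the boundary-orientation convention of $\S$\ref{sub:boundary orientation}, so ``matching $\kappa$'' should be made explicit rather than absorbed into a choice, since $\Phi$ is normalized to have fibre integral $+1$.
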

\begin{proof}
Consider the ansatz 

\begin{gather*}
\phi=\sum_{J\in\zz_{\geq0}^{n'}}\phi_{J}\alpha^{J}\qquad\mbox{and}\qquad e=\sum_{J\in\intcone}e_{J}\alpha^{J}
\end{gather*}
where 
\begin{equation}
\phi_{J}\in\Omega^{n-1-2\left|J\right|}\left(S\left(E\right);\pi_{S}^{*}Or\left(E\right)^{\vee}\right)^{\tb}\mbox{ and }e_{J}\in\Omega^{n-2\left|J\right|}\left(B;Or\left(E\right)^{\vee}\right)^{\tb}.\label{eq:angular and euler in components}
\end{equation}
By degree considerations, we see that condition (b) in Definition
\ref{def:equivariant angular form} is equivalent to requiring 
\[
\left(\mbox{b}'\right)\quad\pi_{S!}^{\kappa}\left(\phi_{\mathbf{0}}\right)=1.
\]
We define a relation $\left(\mbox{a}_{J}\right)$ on (\ref{eq:angular and euler in components})
for every $J=\left(j_{1},...,j_{n'}\right)\in\intcone$, by the following
equation.

\[
\left(\mbox{a}_{J}\right)\quad d\phi_{J}-\sum_{\left\{ 1\leq a\leq n'|j_{a}\geq1\right\} }\iota_{\xi_{a}}\phi_{J-\mathbf{e}_{a}}=-\pi_{S}^{*}e_{J}
\]
where $\mathbf{e}_{a}\in\intcone$ denotes the $a$-th standard unit
vector. Clearly, condition (a) in Definition \ref{def:equivariant angular form}
is equivalent to the logical conjunction $\bigwedge\left(\mbox{a}_{J}\right)$,
taken over all $J\in\intcone$. 

Bott and Tu prove that there exist $\phi_{\mathbf{0}}\in\Omega^{n-1}\left(S\left(E\right);Or\left(E\right)\right),e_{\mathbf{0}}\in\Omega^{n}\left(B;\cc\right)$
such that conditions $\left(\mbox{a}_{\mathbf{0}}\right)$ and $\left(\mbox{b}'\right)$
hold (see \cite{bott+tu}, pg. 121-122, for the orientable case; the
non-orientable case is also discussed in that book). We may assume
without loss of generality that $\phi_{\mathbf{0}}$ and $e_{\mathbf{0}}$
are $\tb$-invariant by averaging. That is, we can replace $\phi_{\mathbf{0}}\leftarrow\pi^{\tb}\phi_{\mathbf{0}}$
and $e_{\mathbf{0}}\leftarrow\pi^{\tb}e_{\mathbf{0}}$, where $\pi^{\mathbb{T}}:C\left(L\right)\to C\left(L\right)^{\mathbb{T}}$
is the projection operator defined by averaging out the $\mathbb{T}$-action
using the Haar measure on $\mathbb{T}$. It is easy to check that
$\pi^{\tb}$ commutes with $d$ as well as with equivariant pushforward
and pullback, and induces a homotopy equivalence $\left(C\left(L\right),d\right)\simeq\left(C\left(L\right)^{\mathbb{T}},d|_{C\left(L\right)^{\tb}}\right)$.
We will continue to use such averaging arguments to establish invariance
of all the forms we need, without explicit mention. 

We now proceed by induction on $t\geq1$. It will be convenient to
extend $\phi_{J}$ and $e_{J}$ by zeros to all of $\zz^{n'}$. We
will also define $\iota_{a}:=\iota_{\xi_{a}}$. With this we can rewrite
the relation $\left(\mbox{a}_{J}\right)$ as 
\[
\left(\mbox{a}_{J}\right)\quad d\phi_{J}-\sum_{J\in\zz^{n'}}\iota_{a}\phi_{J-\mathbf{e}_{a}}=-\pi_{S}^{*}e_{J}.
\]
Now, assume that $\phi_{J},e_{J}$ are given as in (\ref{eq:angular and euler in components})
for all $\left|J\right|\leq t$, such that $\left(\mbox{a}_{J}\right)$
holds for all $\left|J\right|\leq t$. We show that we can define
$\phi_{J},e_{J}$ for $\left\{ J:\left|J\right|=t+1\right\} $ so
that $\left(\mbox{a}_{J}\right)$ holds for such $J$ too. 

We will need the following part of the Gysin sequence for $S\left(E\right)$.\begin{center}
\begin{tikzpicture}[descr/.style={fill=white,inner sep=1.5pt}]
\matrix (m) [matrix of math nodes, row sep=1.6em, 
column sep=2.2em, text height=1.5ex, text depth=0.25ex]
{ & H^{n-2t-2}(B;Or(E)^\vee) & H^{n-2t-2}(S(E);Or(E)^\vee)&\\
H^{-2t-1}(B)=0 & H^{n-2t-1}(B;Or(E)^\vee) & H^{n-2t-1}(S(E);\pi_S^*Or(E)^\vee)\\
};

\path[overlay,->, font=\scriptsize,>=latex]         

(m-1-2) 
edge node[descr,yshift=0.3ex] {$\pi_{S}^*$}
(m-1-3)	

(m-1-3) 
edge[out=355,in=175] node[descr,yshift=0.3ex] {$\pi_{S!}^\kappa$} 
(m-2-1)

(m-2-1)edge(m-2-2)

(m-2-2) 
edge node[descr,yshift=0.3ex] {$\pi_{S}^*$}
(m-2-3);
\end{tikzpicture}
\end{center}

Focus on some $J=\left(j_{1},...,j_{n'}\right)$ with $\left|J\right|=t+1$.
We have

\begin{multline}
\pi_{S}^{*}d\left(\sum_{a=1}^{n'}\iota_{a}e_{J-\mathbf{e}_{a}}\right)=d\pi_{S}^{*}\left(\sum_{a=1}^{n'}\iota_{a}e_{J-\mathbf{e}_{a}}\right)=d\left(\sum_{a=1}^{n'}\iota_{a}\pi_{S}^{*}e_{J-\mathbf{e}_{a}}\right)=\\
=d\left(\sum_{a=1}^{n'}\iota_{a}\pi_{S}^{*}\left(-d\phi_{J-\mathbf{e}_{a}}+\sum_{b=1}^{n'}\iota_{b}\phi_{J-\mathbf{e}_{a}-\mathbf{e}_{b}}\right)\right)=\\
=\pi_{S}^{*}\left(\sum_{a=1}^{n'}d^{2}\iota_{a}\phi_{J-\mathbf{e}_{a}}+d\sum_{1\leq a,b\leq n'}\iota_{a}\iota_{b}\phi_{J-\mathbf{e}_{a}-\mathbf{e}_{b}}\right)=0.\label{eq:pullback of contraction of euler}
\end{multline}
In the last equality we used $d^{2}=0$ and $\sum_{1\leq a,b\leq n'}\iota_{a}\iota_{b}=0$.
Since $\pi_{S}$ is a submersion, $\pi_{S}^{*}$ is injective and
we conclude that $\sum_{a=1}^{n'}\iota_{a}e_{J-\mathbf{e}_{a}}$ is
$d$-closed and in the kernel of 
\[
\pi_{S}^{*}:H^{n-2t+1}\left(B;Or\left(E\right)^{\vee}\right)\to H^{n-2t+1}\left(S\left(E\right);\pi_{S}^{*}Or\left(E\right)^{\vee}\right).
\]
The second row of the Gysin sequence above shows that $\pi_{S}^{*}$
is injective on cohomology classes of degree $n-2t-1$ so we conclude
that $\sum_{a=1}^{n'}\iota_{a}e_{J-\mathbf{e}_{a}}=de_{J}'$ for some
$e'_{J}\in\Omega^{n-2\left(t+1\right)}\left(B;Or\left(E\right)^{\vee}\right)^{\tb}$. 

Next we compute

\begin{multline*}
d\left(-\sum_{a=1}^{n'}\iota_{a}\phi_{J-\mathbf{e}_{a}}+\pi_{S}^{*}e_{J}'\right)=\sum_{a=1}^{n'}\iota_{a}d\phi_{J-\mathbf{e}_{a}}+\pi_{S}^{*}de_{J}'=\\
=\sum_{a=1}^{n'}\iota_{a}\left(\sum_{b=1}^{n'}\iota_{b}\phi_{J-\mathbf{e}_{a}-\mathbf{e}_{b}}-\pi_{S}^{*}e_{J-\mathbf{e}_{a}}\right)+\pi_{S}^{*}de_{J}'=\pi_{S}^{*}\left(de_{J}'-\sum_{a=1}^{n'}\iota_{a}e_{J-\mathbf{e}_{a}}\right)=0
\end{multline*}
Using the top row of the Gysin sequence above we conclude  that
\[
\left[-\sum_{a=1}^{n'}\iota_{a}\phi_{J-\mathbf{e}_{a}}+\pi_{S}^{*}e_{J}'\right]\in\im\left(\pi_{S}^{*}\right).
\]
\sloppy In other words, there exists some ${x\in\Omega^{n-2t}\left(B;Or\left(E\right)^{\vee}\right)^{\tb}}$
and some ${y\in\Omega^{n-1-2t}\left(S\left(E\right);\pi_{S}^{*}Or\left(E\right)^{\vee}\right)^{\tb}}$
such that 
\[
-\sum_{a=1}^{n'}\iota_{a}\phi_{J-\mathbf{e}_{a}}+\pi_{S}^{*}e_{J}'=\pi_{S}^{*}x+dy.
\]
It follows that 
\begin{gather*}
\phi_{J}=-y\\
e_{J}=e_{J}'-x.
\end{gather*}
satisfy condition $\left(\mbox{a}_{J}\right)$. In this way we define
$\phi_{J},e_{J}$ for all $\left\{ J:\left|J\right|=t+1\right\} $,
establishing the inductive step. This concludes the proof of the proposition.
\end{proof}
We will not use the following proposition in this paper, but state
it for completeness and future reference. 
\begin{prop}
Let $E\overset{\pi}{\longrightarrow}B$ be an equivariant vector bundle.
Choose an equivariant angular form $\phi$ for $\pi$, with associated
equivariant Euler form $e$.

(a) $De=0$. Moreover, if $\tilde{\phi}$ is another equivariant angular
form for $\pi$ and $\tilde{e}$ is the associated equivariant Euler
form then there exists some form $\gamma\in\Omega\left(B;Or\left(E\right)^{\vee}\bva\right)^{\tb}$
such that 
\[
\tilde{e}-e=D\gamma.
\]

(b) Let $f:B'\to B$ be a smooth equivariant map and $E'\overset{\pi'}{\longrightarrow}B'$
the pullback bundle. If we use the pulled-back bundle metric on $E'$,
we obtain a Cartesian square 
\[
\xymatrix{S\left(E'\right)\ar[r]^{\tilde{f}}\ar[d] & S\left(E\right)\ar[d]\\
B'\ar[r]_{f} & B
}
\]
and $\tilde{f}^{*}\phi$ is an angular form for $E'$; the associated
Euler form is $f^{*}e$.\end{prop}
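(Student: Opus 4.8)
The plan is to handle the three assertions in turn; the first and third are formal, and the second carries the real content.

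\emph{Part (a), $De=0$.} I would apply $D$ to the defining relation $D\phi=-\pi_S^{*}e$ (with $\pi_S\colon S(E)\to B$). The left side vanishes because $D^{2}=0$. Since $D=d-\sum_{j}\alpha_{j}\iota_{\xi_{j}}$ commutes with pullback along the equivariant map $\pi_S$ --- the $d$-part because exterior differentiation does, the correction because $\pi_{S*}$ carries the generating vector field $\xi_{j}$ of $S(E)$ to that of $B$, so $\pi_S^{*}\iota_{\xi_{j}}=\iota_{\xi_{j}}\pi_S^{*}$ --- the right side is $-\pi_S^{*}(De)$. As $\pi_S$ is a surjective submersion, $\pi_S^{*}$ is injective on forms, and $De=0$ follows.

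\emph{Part (a), $\tilde e-e$ is $D$-exact.} Set $\psi:=\tilde\phi-\phi\in\Omega^{n-1}(S(E);\pi_S^{*}Or(E)^{\vee}\bva)^{\tb}$. Conditions (a) and (b) of Definition \ref{def:equivariant angular form} give $\pi_{S!}^{\kappa}\psi=1-1=0$ and $D\psi=-\pi_S^{*}(\tilde e-e)$; in particular $\pi_S^{*}(\tilde e-e)$ is $D$-exact, so $\pi_S^{*}[\tilde e-e]=0$ in $H^{CW}(S(E);\pi_S^{*}Or(E)^{\vee})$. The key point is that $[e]$ --- and likewise $[\tilde e]$ --- is, \emph{independently} of the chosen angular form, the equivariant Euler class of $E$, so the two classes coincide. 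To see this I would run the equivariant analogue of the Bott--Tu construction of a Thom form out of a global angular form (\cite{bott+tu}), with $D$ in place of $d$ and averaging over $\tb$ to keep everything invariant: pulling $\phi$ back through the radial retraction $E\setminus 0\to S(E)$ to $\bar\phi$ and damping by a bump function $\rho(|v|)$ with $\rho\equiv-1$ near $0$ and $\rho\equiv0$ for large $|v|$, one gets $D(\rho\bar\phi)=\rho\, D\bar\phi+d\rho\wedge\bar\phi$, a $D$-closed, fibrewise compactly supported $\pi^{*}Or(E)^{\vee}\bva$-valued form on all of $E$ (smooth across the zero section since $d\rho$ vanishes there) whose fibrewise integral is $1$ by $\pi_{S!}^{\kappa}\phi=1$ and whose restriction to the zero section $s$ equals $-D\bar\phi|_{s}=\pi^{*}e|_{s}=e$. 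Since any two such Thom forms are cohomologous (equivariant Thom isomorphism), $[e]=s^{*}[\mathfrak{th}(E)]$ is canonical, and the same for $[\tilde e]$. Equivalently one may organize this through the equivariant Gysin sequence of $\pi_S$ --- the exact sequence a segment of which appeared in the proof of Proposition \ref{prop:equiv ang form exists}, continued one step --- whose connecting map $H^{CW,0}(B)\to H^{CW,n}(B;Or(E)^{\vee})$ is cup product with that Euler class, so that exactness identifies $\ker\pi_S^{*}$ in degree $n$ with its image. Either way $[\tilde e]=[e]$, i.e.\ $\tilde e-e=D\gamma$ for some $\gamma\in\Omega(B;Or(E)^{\vee}\bva)^{\tb}$.

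\emph{Part (b).} With the pulled-back metric the unit sphere of $E'_{b'}=E_{f(b')}$ is literally that of $E_{f(b')}$, so $S(E')=B'\times_{B}S(E)$, the displayed square is Cartesian with top arrow $\tilde f$, and $\pi_S\circ\tilde f=f\circ\pi'_S$. Under the canonical identification $Or(E')=f^{*}Or(E)$ the coefficient systems match, $\tilde f^{*}\pi_S^{*}Or(E)^{\vee}=(\pi'_S)^{*}Or(E')^{\vee}$, and since the exact sequences \ref{eq:orienting sphere} for $E'$ are the $\tilde f$-pullbacks of those for $E$, naturality of the construction of $\kappa$ shows the gluing datum of $E'$ is $\tilde f^{*}\kappa$. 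Put $\phi':=\tilde f^{*}\phi$. Then $(\pi'_S)_{!}^{\kappa}(\phi')=f^{*}(\pi_{S!}^{\kappa}\phi)=f^{*}1=1$ by base change for equivariant fibre integration along a Cartesian square (appendix \ref{sec:Appendix.-Orientation-convention}), and $D'\phi'=\tilde f^{*}(D\phi)=\tilde f^{*}(-\pi_S^{*}e)=-(\pi'_S)^{*}f^{*}e$ since $D$ commutes with equivariant pullback; hence $\phi'$ is an equivariant angular form for $E'$ with associated Euler form $f^{*}e$.

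The one genuinely nontrivial ingredient is inside the second step: setting up the equivariant (twisted) Thom form / Gysin package for $S(E)\to B$ and pinning down the normalization of the Euler class --- this is where the topology of the sphere fibre actually enters. The first claim of (a) and all of (b) are pure naturality and bookkeeping.
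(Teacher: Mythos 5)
Your argument for $De=0$ and for part (b) is correct and is exactly the "immediate" bookkeeping the paper has in mind: injectivity of $\pi_S^*$ for a submersion, equivariance of pullback and of fibre integration under base change. For the well-definedness of $[e]$, however, you take a genuinely different route from the paper. The paper's proof (stated only as ``along the same lines as the previous proposition'') is an induction on $|J|$ in the expansion $\gamma=\sum\gamma_J\alpha^J$: the base case is the non-equivariant well-definedness from \cite{bott+tu}, and each inductive step uses the same two segments of the Gysin sequence as in Proposition \ref{prop:equiv ang form exists} to solve $\tilde e_J-e_J=d\gamma_J-\sum_a\iota_a\gamma_{J-\mathbf{e}_a}$. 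You instead argue globally, passing through the equivariant Thom form and invoking uniqueness of its class. That is cleaner and more conceptual, but it imports the equivariant Thom isomorphism, which the paper nowhere establishes (it only \emph{constructs} a Thom form); proving that uniqueness would itself require an induction or spectral-sequence argument of essentially the same shape as the one you are replacing. Two smaller points: (i) $D(\rho\bar\phi)$ is not literally defined across the zero section since $\bar\phi$ is singular there and $\rho\equiv-1$ near $0$; you need the Bott--Tu splitting $d\rho\wedge\bar\phi-\rho\,\pi^*e$ (or the paper's blow-up, Eq.\ (\ref{eq:thom defining equation})) to see the Thom form is smooth. (ii) Your Gysin-sequence alternative, as stated, only shows $[\tilde e]-[e]$ lies in the image of the connecting map $H^{CW,0}(B)\to H^{CW,n}(B;Or(E)^\vee)$, i.e.\ is a multiple of the Euler class --- to conclude it vanishes you must first identify the connecting map independently of the choice of angular form, which is the very point at issue; so that variant is circular unless supplemented. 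The Thom-class route, suitably completed, does work; just be aware it is not self-contained relative to this paper.
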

\begin{proof}
Part (a) is proved along the same lines as the previous proposition.
Part (b) is immediate.
\end{proof}

\subsection{General equivariant pushforward and Poincare duality}

We now want to define pushforward along an equivariant embedding.
Using the graph construction, this can be used to define pushforward
along any equivariant map. As a special case, we obtain a Poincare
dual to an equivariant submanifold.

\subsubsection{\label{sub:Blowing-up}Blowing up.}

Before we can define the equivariant pushforward, we need to discuss
a certain blow up construction that will allow us to construct the
Thom form for an equivariant bundle.
\begin{defn}
(a) Let $E\overset{\pi}{\longrightarrow}B$ be an equivariant vector
bundle. The \emph{blow up $\widetilde{E}$ of the zero section} \emph{of
$\pi$} is the $\tb$-manifold with boundary $\widetilde{E}:=S\left(E\right)\times[0,\infty)$,
where the action on $[0,\infty)$ is trivial. It comes equipped with
an equivariant smooth map $\beta:\widetilde{E}\to E$ so that $\left(v,r\right)\in S\left(E\right)\times[0,\infty)$
maps to $r\cdot i_{S\left(N\right)}\left(v\right)$ (cf. the discussion
at the beginning of $\S$\ref{sub:equiv ang and euler forms}). $\tilde{\pi}:=\pi\circ\beta:\widetilde{E}\to B$
turns this into an equivariant fiber-bundle.

(b) Let $X\overset{i}{\hookrightarrow}Y$ be an equivariant embedding
of closed $\tb$-manifolds. Let $N=N_{X/Y}$ denote the associated
equivariant normal bundle, and fix some equivariant tubular neighbourhood
$X\subset N\subset Y$ of $X$ in $Y$. We define \emph{the blow
up of $Y$ along $X$ }by 
\[
\bl_{X}Y:=\widetilde{N}\bigcup_{S\left(N\right)\times\left(0,\infty\right)}\left(Y\backslash X\right)
\]
where we use the open embeddings ${S\left(N\right)\times\left(0,\infty\right)\subset S\left(N\right)\times[0,\infty)}$
and ${S\left(N\right)\times\left(0,\infty\right)\overset{\beta|_{S\left(N\right)\times\left(0,\infty\right)}}{\longrightarrow}N\backslash\mbox{zero section}\subset Y\backslash X}$
to glue. The maps $\beta:S\left(N\right)\times[0,\infty)\to N\subset Y$
and $Y\backslash X\to Y$ glue to form a map 
\[
\beta_{X/Y}:\bl_{X}Y\to Y.
\]
We check $\bl_{X}Y$ is Hausdorff: if distinct points $x,y\in\bl_{X}Y$
satisfy ${\beta_{X/Y}\left(x\right)\neq\beta_{X/Y}\left(y\right)}$
then they can be separated in $Y$, otherwise they can be separated
in $S\left(N_{\beta_{X/Y}\left(x\right)}\right)$. It follows that
$\bl_{X}Y$ is a manifold with boundary. \end{defn}
\begin{rem}
Our terminology is non-standard. What is usually referred to as the
blow up of $Y$ along $X$ is obtained from the above construction
by identifying antipodal points in $S\left(N\right)\subset Y$ (if
we're doing real algebraic geometry) or $S^{1}$ orbits associated
with a $U\left(1\right)$ action (if we're working over $\cc$, so
in particular, $N$ can be taken to be a Hermitian vector bundle).
Either way, the usual construction produces a manifold without boundary.
On the other hand, $\bl_{X}Y$ is relatively orientable, see Lemma
\ref{lem:blow up is relatively orientable}.

The map $\beta_{X/Y}$ satisfies a natural universal property, which
in particular means the blow up is essentially unique, independent
of the choices (tubular neighbourhood, bundle metric) and explains
our use of the definite noun (\emph{the} blow up). We will not go
into details since for the purposes of this paper, we do not care
about the uniqueness of the construction.
\end{rem}
Let $\widetilde{E}=S\left(E\right)\times[0,\infty)$ be the blow up
of $E$. We denote by $pr_{S}:\widetilde{E}\to S\left(E\right)$ and
$pr_{r}:\widetilde{E}\to[0,\infty)$ the projections. 
\begin{lem}
\label{lem:blow up is relatively orientable}(a) There is a natural
equivariant local system isomorphism
\begin{equation}
Or\left(T\widetilde{E}\right)\simeq\beta^{*}Or\left(TE\right).\label{eq:blow up orientation}
\end{equation}
On the open set $U=\widetilde{E}\backslash\left(S\left(E\right)\times\left\{ 0\right\} \right)$
this isomorphism is induced by the isomorphism of vector bundles 
\[
d\beta:T\widetilde{E}\to TE.
\]

(b) If $X\overset{i}{\hookrightarrow}Y$ is an equivariant embedding
of closed $\tb$-manifolds, there's a canonical isomorphism of local
systems 

\[
Or\left(T\bl_{X}Y\right)\simeq\beta_{X/Y}^{*}Or\left(TY\right)
\]
which agrees with (\ref{eq:blow up orientation}) on $\widetilde{N}\subset\bl_{X}Y$.

(c) If $\im\left(i\right)\neq Y$ then 
\[
\int_{Y}\omega=\int_{\bl_{X}Y}\beta_{X/Y}^{*}\omega
\]
for any $\omega\in\Omega\left(Y;Or\left(TY\right)\bva^{\vee}\right)^{\tb}$,
where we use the isomorphism of part (b) to identify ${\Omega\left(Y;\beta^{*}Or\left(TY\right)\bva^{\vee}\right)^{\tb}=\Omega\left(Y;Or\left(T\bl_{X}Y\right)\bva^{\vee}\right)^{\tb}}$.\end{lem}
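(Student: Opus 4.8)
The plan is to treat the three parts in order, since (b) glues together the local model from (a) with the trivial isomorphism on $Y\setminus X$, and (c) is a change-of-variables computation that uses (b) together with Lemma's setup on oriented pushforward.

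For part (a): On the open set $U=\widetilde E\setminus(S(E)\times\{0\})$ the map $\beta$ is a diffeomorphism onto $E\setminus(\text{zero section})$, so $d\beta\colon T\widetilde E\to\beta^*TE$ is an isomorphism of vector bundles there and induces the claimed isomorphism of orientation local systems. The only issue is extending this over the boundary divisor $S(E)\times\{0\}$. Here I would use the explicit product structure $\widetilde E=S(E)\times[0,\infty)$: we have $T\widetilde E\simeq pr_S^*TS(E)\oplus \underline{\rr}$, where the $\underline\rr$ summand is $pr_r^*T[0,\infty)$. Hence $Or(T\widetilde E)\simeq pr_S^*Or(TS(E))$ canonically (the trivial line contributes nothing), equivariantly by Remark \ref{rem:ls equivariant is a property}. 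On the other side, $\beta^*TE$ restricted to a neighbourhood of the boundary: using the short exact sequences \eqref{eq:orienting sphere} pulled back along $pr_S$, namely $0\to\underline\rr\to i_S^*TE\to TS(E)\to 0$ and $0\to i_S^*\pi^*E\to i_S^*TE\to i_S^*\pi^*TB\to 0$, we get $pr_S^*\big(Or(i_S^*TE)\big)\simeq pr_S^*Or(TS(E))$. Since $\beta$ maps a collar of the boundary into a neighbourhood of the zero section of $E$, where $TE\simeq \pi^*E\oplus\pi^*TB$ and this matches $i_S^*TE$ pulled back, one checks these two identifications of $Or(T\widetilde E)$ with $pr_S^*Or(TS(E))$ agree on the overlap $U\cap(\text{collar})$, and therefore glue to a global isomorphism \eqref{eq:blow up orientation} restricting to $d\beta$ on $U$. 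Equivariance is automatic by Remark \ref{rem:ls equivariant is a property}.

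For part (b): $\bl_X Y$ is glued from $\widetilde N$ and $Y\setminus X$ along $S(N)\times(0,\infty)$. On $Y\setminus X$ take the identity isomorphism $Or(T\bl_X Y)|_{Y\setminus X}=Or(TY)|_{Y\setminus X}=\beta_{X/Y}^*Or(TY)|_{Y\setminus X}$, since $\beta_{X/Y}$ restricts to the identity there. On $\widetilde N$ take the isomorphism from part (a) (with $E=N$), noting $\beta_{X/Y}|_{\widetilde N}=\beta$ composed with the tubular neighbourhood inclusion $N\hookrightarrow Y$, and $Or(TN)\simeq$ (restriction of) $Or(TY)$ via that open embedding. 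On the overlap $S(N)\times(0,\infty)$, part (a) says the isomorphism is precisely the one induced by $d\beta$, which is exactly the identification coming from viewing $S(N)\times(0,\infty)$ as an open subset of $Y\setminus X$ via $\beta$. So the two descriptions agree on the overlap and glue to the global isomorphism; it agrees with \eqref{eq:blow up orientation} on $\widetilde N$ by construction.

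For part (c): Since $\im(i)\neq Y$, the zero section (equivalently $X$) has positive codimension, so $X\subset Y$ has measure zero and $\beta_{X/Y}\colon\bl_X Y\to Y$ restricts to a diffeomorphism $\bl_X Y\setminus\partial(\bl_X Y)\xrightarrow{\sim} Y\setminus X$ which is orientation-compatible with respect to the identification of part (b). Fix $\omega\in\Omega(Y;Or(TY)\bva^\vee)^{\tb}$. Using part (b) to identify the coefficient local systems, $\int_{\bl_X Y}\beta_{X/Y}^*\omega$ is computed by the usual integration-over-manifolds-with-boundary, but the boundary has measure zero and $\beta_{X/Y}$ is an orientation-preserving diffeomorphism away from it, so by the change of variables formula for integration of top-degree (twisted) forms $\int_{\bl_X Y}\beta_{X/Y}^*\omega=\int_{Y\setminus X}\omega=\int_Y\omega$, the last equality again because $X$ has measure zero. (The $\alpha$-coefficients, i.e. the $\bva$, just come along for the ride: integration is $\ca$-linear and only the top De Rham component of $\omega$ contributes, so this is exactly the non-equivariant statement applied coefficient-wise, using $\int=0$ on forms of De Rham degree $<\dim Y$.)

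The main obstacle is the gluing in part (a) over the boundary divisor: one must be careful that the two a priori different trivializations of $Or(T\widetilde E)$ — one from the product decomposition $\widetilde E=S(E)\times[0,\infty)$, one from $d\beta$ on $U$ — are compatible, which amounts to checking that the orientation conventions in the exact sequences \eqref{eq:orienting sphere} are set up consistently with the radial blow-down map. Once (a) is nailed down, (b) and (c) are essentially formal.
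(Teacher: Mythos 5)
Your proposal is correct and uses essentially the same ingredients as the paper's proof: the product decomposition $T\widetilde{E}=pr_{S}^{*}TS\left(E\right)\oplus pr_{r}^{*}T[0,\infty)$ together with the radial short exact sequence for part (a), gluing the identity on $Y\backslash X$ with the part-(a) isomorphism on $\widetilde{N}$ for part (b), and the observation that $\beta_{X/Y}$ is a diffeomorphism away from a measure-zero set for part (c). The compatibility check on $U$ that you flag as the main obstacle is exactly the point the paper also leaves as ``easy to see,'' so your treatment is on par with the original.
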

\begin{proof}
To construct (\ref{eq:blow up orientation}) use the ordered direct
sum decomposition

\[
\begin{gathered}T\widetilde{E}=pr_{S}^{*}TS\left(E\right)\oplus pr_{r}^{*}T[0,\infty)\end{gathered}
\]
and the short exact sequence 
\[
0\to TS\left(E\right)\to\pi_{S}^{*}TE\overset{dr}{\longrightarrow}\underline{\rr}\to0
\]
where $r$ is the distance from the zero section of $E$. $pr_{r}^{*}T[0,\infty)$
and $\underline{\rr}$ are both canonically oriented by the positive
direction, and so the decompositions define the desired isomorphism
by $\S$\ref{sub:2 of 3 orientation}. It is easy to see that on $U$
this isomorphism agrees with $d\beta$.

To prove (b), write $\bl_{X}Y=U_{1}\cup U_{2}$ where $U_{1}=\widetilde{N}$
and $U_{2}=Y\backslash X$. For $i=1,2$ we have isomorphisms $Or\left(T\bl_{X}Y\right)|_{U_{i}}\simeq\beta_{X/Y}^{*}Or\left(TY\right)|_{U_{i}}$
(for $i=1$ by part (a), for $i=2$ by the identity). These isomorphisms
agree on the intersection $U_{1}\cap U_{2}=U$. The result follows.

For part (c), note that if we restrict to the complement of an arbitrarily
small neighbourhood of $X\hookrightarrow Y$, the integrands can be
identified. The result follows.
\end{proof}

\subsubsection{The equivariant Thom form.}

Fix once and for all some smooth function $\sigma:[0,\infty)\to[-1,0]$
which satisfies $\sigma\left(r\right)=-1$ for $r\leq\frac{1}{2}$
and $\sigma\left(r\right)=0$ for $r\geq1$. Let $E\overset{\pi}{\longrightarrow}B$
be a rank $n$ equivariant vector bundle, let $\widetilde{E}=S\left(E\right)\times[0,\infty)$
be the blow up of $E$. Choose an equivariant angular form $\phi\in\Omega\left(S\left(E\right);Or\left(E\right)^{\vee}\bva\right)^{\tb}$
for $E$. There is a unique form $\tau_{E}\in\Omega^{n}\left(E;\pi^{*}Or\left(E\right)^{\vee}\bva\right)^{\tb}$
which satisfies 
\begin{equation}
\beta^{*}\tau_{E}=D\left(pr_{S}^{*}\phi\wedge pr_{r}^{*}\sigma\right).\label{eq:thom defining equation}
\end{equation}

\begin{defn}
$\tau_{E}$ is called \emph{an equivariant Thom form for $E$.}
\end{defn}
Note that $\tau_{E}$ is $D$-closed since it pulls back to a $D$-exact
form on the blow up.

\subsubsection{Pushforward along an equivariant embedding.}
\begin{defn}
\label{def:oriented embedding}(cf. Definition \ref{def:oriented submersion})
An \emph{oriented equivariant embedding }is a pair $\left(i,\kappa\right)$
where $i:X\hookrightarrow Y$ is a $\tb$-equivariant embedding of
closed $\tb$-manifolds and 
\[
\kappa:Or\left(TX\right)\otimes i^{*}Or\left(TY\right)^{\vee}\otimes\kk\simeq i^{*}\lc
\]
is an (equivariant, see Remark \ref{rem:ls equivariant is a property})
isomorphism of local systems, where $\kk,\lc$ are equivariant local
systems on $X$ and on $Y$, respectively. 
\end{defn}
Let $\left(i,\kappa\right)$ be an oriented equivariant embedding,
set $n=\dim Y-\dim X$, and let $N\overset{\pi}{\longrightarrow}X$
denote the rank $n$ normal bundle associated with $i$:
\begin{equation}
0\to TX\to i^{*}TY\to N\to0.\label{eq:normal bundle ses}
\end{equation}
Let $\tau_{N}\in\Omega\left(N;Or\left(TX\right)^{\vee}\otimes i^{*}Or\left(TY\right)\bva\right)^{\tb}$
be an equivariant Thom form for $N$, where we've used (\ref{eq:normal bundle ses})
to identify $Or\left(N\right)^{\vee}\simeq Or\left(TX\right)^{\vee}\otimes i^{*}Or\left(TY\right)$
(cf. $\S$\ref{sub:2 of 3 orientation}).
\begin{defn}
Given an oriented equivariant embedding $\left(i,\kappa\right)$ we
define an equivariant pushforward 
\[
i_{!}^{\kappa}:\Omega\left(X;\kk\bva\right)^{\tb}\to\Omega\left(Y;\lc\bva\right)^{\tb}\left[n\right]
\]
by 
\[
i_{!}^{\kappa}\left(\omega\right):=\tau_{N}\wedge\pi^{*}\omega,
\]
where we interpret the right hand side to mean the zero extension
(to $Y$) of the compactly supported form $\tau_{N}\wedge\pi^{*}\omega$
which is defined on $N$.\end{defn}
\begin{prop}
(Equivariant Poincare duality.) For any pair of $D$-closed forms
$\omega_{X}\in\Omega\left(X;\kk\bva\right)^{\tb}$, $\omega_{Y}\in\Omega\left(Y;\left(\lc\otimes Or\left(TY\right)\right)^{\vee}\bva\right)^{\tb}$,
we have 
\[
\int_{Y}i_{!}^{\kappa}\omega_{X}\wedge\omega_{Y}=\int_{X}\omega_{X}\wedge i^{*}\omega_{Y}
\]
where on the right we think of $\omega_{X}\wedge i^{*}\omega_{Y}$
as taking values in $Or\left(TX\right)^{\vee}$ using the identification
$\kk\otimes i^{*}\left(\lc\otimes Or\left(TY\right)\right)^{\vee}\simeq Or\left(TX\right)^{\vee}$
induced by $\kappa$.\end{prop}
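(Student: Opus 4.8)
The plan is to reduce the equivariant Poincar\'e duality statement to the defining property of the Thom form together with Lemma \ref{lem:blow up is relatively orientable}(c), by working on the normal bundle $N = N_{X/Y}$ and then on its blow up $\widetilde N = S(N)\times[0,\infty)$. First I would observe that $i_!^\kappa\omega_X \wedge \omega_Y = \tau_N \wedge \pi^*\omega_X \wedge \omega_Y$ is, by definition, the zero-extension to $Y$ of a compactly supported form on the tubular neighbourhood $N \subset Y$; hence $\int_Y i_!^\kappa\omega_X\wedge\omega_Y = \int_N \tau_N\wedge\pi^*\omega_X\wedge\pi^*(\omega_Y|_N)$, using that $\omega_Y$ restricted to the tubular neighbourhood is $\pi$-pulled back up to a $D$-exact error (more precisely, I would want to replace $\omega_Y|_N$ by $\pi^* i^*\omega_Y$; since both are $D$-closed and cohomologous on $N$ — $N$ deformation retracts equivariantly onto $X$ — they differ by $D\gamma$, and $\tau_N\wedge\pi^*\omega_X\wedge D\gamma$ is $D$-exact with compact support, so integrates to zero by Stokes, which holds equivariantly because $\int D(\cdot)=0$).

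Next I would pass to the blow up $\widetilde N$. By Lemma \ref{lem:blow up is relatively orientable}(c), since $\mathrm{im}(i)\neq Y$ forces $X\neq N$ (as a zero section of a positive-rank bundle, assuming $n = \dim Y - \dim X \geq 1$; the case $n=0$ is trivial since then $i$ is a union of components and the statement is immediate), we have $\int_N \beta^*(\cdots)$ computes the same number, and there $\beta^*\tau_N = D(pr_S^*\phi\wedge pr_r^*\sigma)$ by Eq.~(\ref{eq:thom defining equation}). So I would write $\int_N \tau_N\wedge\pi^*\omega_X\wedge\pi^*i^*\omega_Y = \int_{\widetilde N} D(pr_S^*\phi\wedge pr_r^*\sigma)\wedge \tilde\pi^*(\omega_X\wedge i^*\omega_Y)$, where $\tilde\pi = \pi\circ\beta$. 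Since $\omega_X$ and $i^*\omega_Y$ are $D$-closed, I can pull the $D$ outside: the integrand equals $\pm D\big(pr_S^*\phi\wedge pr_r^*\sigma\wedge\tilde\pi^*(\omega_X\wedge i^*\omega_Y)\big)$, and $\widetilde N = S(N)\times[0,\infty)$ is a manifold with boundary $S(N)\times\{0\}$ together with the ``boundary at infinity'' where the integrand vanishes (because $\omega_X$ has compact support on $X$, so $\tilde\pi^*(\omega_X\wedge i^*\omega_Y)$ is compactly supported in the $S(N)$-directions, and more importantly $pr_r^*\sigma$ is constant $=0$ for $r\geq 1$ so only $r\in[0,1]$ contributes). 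Applying Stokes (equivariantly) reduces the integral to the boundary contribution at $r=0$: the term $pr_r^*\sigma = -1$ there, and $pr_S^*\phi$ restricted to $S(N)\times\{0\}$ is just $\phi$, so the boundary integral becomes $-(-1)\int_{S(N)} \phi\wedge\pi_S^*(\omega_X\wedge i^*\omega_Y) = \int_{S(N)}\phi\wedge\pi_S^*(\omega_X\wedge i^* \omega_Y)$.

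Finally I would invoke the defining property of the equivariant angular form, $\pi_{S!}^\kappa(\phi) = 1$ (Definition \ref{def:equivariant angular form}(a)), together with the projection formula for equivariant pushforward along the proper submersion $\pi_S: S(N)\to X$: $\int_{S(N)}\phi\wedge\pi_S^*(\omega_X\wedge i^*\omega_Y) = \int_X \pi_{S!}^\kappa(\phi)\wedge\omega_X\wedge i^*\omega_Y = \int_X \omega_X\wedge i^*\omega_Y$, which is exactly the right-hand side. Throughout, I would need to be careful that all the orientation/local-system identifications are the consistent ones: the $\kappa$ appearing in $i_!^\kappa$, the one used to orient $S(N)$ relative to $X$, the identification $Or(N)^\vee \simeq Or(TX)^\vee\otimes i^*Or(TY)$ from the normal bundle sequence, and the final identification $\kk\otimes i^*(\lc\otimes Or(TY))^\vee\simeq Or(TX)^\vee$ must all be compatible — this bookkeeping of signs and local-system twists, rather than any analytic difficulty, is the main obstacle, and it is essentially forced by the constructions in \S\ref{sub:2 of 3 orientation} and the appendix on orientation conventions. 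A secondary point requiring care is the justification that one may replace $\omega_Y|_N$ by $\pi^* i^*\omega_Y$ up to a $D$-exact compactly-supported error; this uses the equivariant homotopy invariance of $H^{CW}$ under the retraction $N\to X$, which follows from Proposition \ref{prop:equivariant cohomology of separated manifolds}-type arguments or, more directly, from an explicit equivariant homotopy operator for the deformation retraction.
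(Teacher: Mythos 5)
Your proof is correct and follows essentially the same route as the paper's: pass to the blow up via Lemma \ref{lem:blow up is relatively orientable}(c), write $\beta^{*}\tau_{N}$ as $D\left(pr_{S}^{*}\phi\wedge pr_{r}^{*}\sigma\right)$, apply Stokes on $\widetilde{N}$, and conclude from $\sigma\left(0\right)=-1$ together with $\pi_{S!}^{\kappa}\phi=1$. The one superfluous step is your replacement of $\omega_{Y}|_{N}$ by $\pi^{*}i^{*}\omega_{Y}$ via an equivariant homotopy operator for the retraction $N\to X$: the paper keeps $\beta^{*}\omega_{Y}$ throughout and only uses the pointwise identity $\beta|_{S\left(N\right)\times\left\{ 0\right\} }=i\circ\pi_{S}$ when restricting to the boundary, so that $i_{\partial}^{*}\beta^{*}\omega_{Y}=\pi_{S}^{*}i^{*}\omega_{Y}$ comes for free and no cohomological argument is needed.
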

\begin{proof}
Denote by ${i_{\partial}:S\left(N\right)=S\left(N\right)\times\left\{ 0\right\} \hookrightarrow\bl_{X}Y}$
the inclusion of the boundary. We compute
\begin{multline*}
\int_{Y}i_{!}^{\kappa}\omega_{X}\wedge\omega_{Y}\overset{\left(1\right)}{=}\int_{\widetilde{N}}D\left(pr_{S}^{*}\phi\wedge pr_{r}^{*}\sigma\right)\wedge\tilde{\pi}^{*}\omega_{X}\wedge\beta^{*}\omega_{Y}|_{\widetilde{N}}\overset{}{=}\\
\overset{}{=}\int_{\widetilde{N}}D\left(pr_{S}^{*}\phi\wedge pr_{r}^{*}\sigma\wedge\tilde{\pi}^{*}\omega_{X}\wedge\beta^{*}\omega_{Y}|_{\widetilde{N}}\right)\overset{\left(2\right)}{=}\\
=-\int_{S\left(N\right)}i_{\partial}^{*}\left(pr_{S}^{*}\phi\wedge pr_{r}^{*}\sigma\wedge\tilde{\pi}^{*}\omega_{X}\wedge\beta^{*}\omega_{Y}|_{\widetilde{N}}\right)=\\
\overset{}{=}-\int_{S\left(N\right)}\phi\wedge\sigma\left(0\right)\wedge\pi_{S}^{*}\left(\omega_{X}\wedge i^{*}\omega_{Y}\right)=\int_{X}\omega_{X}\wedge i^{*}\omega_{Y}.
\end{multline*}
In $\left(1\right)$ we've used part (c) of Lemma \ref{lem:blow up is relatively orientable},
and the fact the integrand is supported in $\widetilde{N}\subset\bl_{X}Y$.
In (2) we picked up a minus sign, since we think of $S\left(N\right)$
as oriented as in \ref{eq:orienting sphere}, using the outward pointing
radial vector. This is the same as thinking of $S\left(N\right)$
as the boundary of the associated disc bundle $D\left(N\right)$,
see $\S$\ref{sub:boundary orientation}, which is opposite its orientation
as the boundary of $\bl_{X}Y$, which is the orientation used to apply
Stokes' Theorem \ref{lem:Stokes'-Theorem.}.
\end{proof}

\begin{cor}
\label{cor:equivariant poincare duality}Given an equivariant isomorphism
of local systems 
\begin{equation}
{\kappa:Or\left(TX\right)\otimes i^{*}Or\left(TY\right)^{\vee}\simeq i^{*}\lc},\label{eq:ls iso needed}
\end{equation}
we call $\tau_{X/Y}:=i_{!}^{\kappa}1$ the \emph{($\lc$-valued) equivariant
Poincare dual to $X$ in $Y$}. We have 
\[
\int_{Y}\tau_{X/Y}\wedge\omega=\int_{X}\omega|_{X}
\]
for any $D$-closed form $\omega\in\Omega\left(Y;\left(\lc\otimes Or\left(TY\right)\right)^{\vee}\right)$.
\end{cor}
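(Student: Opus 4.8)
The plan is to deduce this directly from the Equivariant Poincare duality proposition proved just above, specialized to the trivial local system on $X$ and the constant function $1$.

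First I would observe that taking $\kk=\underline{\cc}$, the trivial local system on $X$, the data of an oriented equivariant embedding $\left(i,\kappa\right)$ in the sense of Definition \ref{def:oriented embedding} reduces to exactly an equivariant isomorphism $\kappa:Or\left(TX\right)\otimes i^{*}Or\left(TY\right)^{\vee}\simeq i^{*}\lc$, since $Or\left(TX\right)\otimes i^{*}Or\left(TY\right)^{\vee}\otimes\underline{\cc}$ is canonically $Or\left(TX\right)\otimes i^{*}Or\left(TY\right)^{\vee}$; this is precisely the isomorphism (\ref{eq:ls iso needed}). Hence $\left(i,\kappa\right)$ is an oriented equivariant embedding, $i_{!}^{\kappa}1$ is defined, and $i_{!}^{\kappa}1=\tau_{N}\wedge\pi^{*}1=\tau_{N}$ (zero-extended to $Y$), which is $D$-closed since $\tau_{N}$ is, as noted right after the definition of the equivariant Thom form.

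Next I would note that the constant function $1\in\Omega^{0}\left(X;\underline{\cc}\bva\right)^{\tb}$ is $D$-closed, because $D1=d1-\sum_{j}\alpha_{j}\iota_{\xi_{j}}1=0$, and that $i^{*}1=1$, so for any $D$-closed $\omega$ we have $1\wedge i^{*}\omega=i^{*}\omega=\omega|_{X}$. Applying the Equivariant Poincare duality proposition with $\omega_{X}=1$ and $\omega_{Y}=\omega$ then yields
\[
\int_{Y}\tau_{X/Y}\wedge\omega=\int_{Y}i_{!}^{\kappa}1\wedge\omega=\int_{X}1\wedge i^{*}\omega=\int_{X}\omega|_{X},
\]
which is the claim.

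The only point requiring any care — and it is bookkeeping rather than a genuine obstacle — is checking that the local-system identifications line up, so that the two integrands are honestly $\left(\lc\otimes Or\left(TY\right)\right)^{\vee}\bva$-valued and $Or\left(TX\right)^{\vee}\bva$-valued respectively, as needed to apply the pushforward-to-a-point maps (\ref{eq:equivariant integration}). With $\kk=\underline{\cc}$, the identification $\kk\otimes i^{*}\left(\lc\otimes Or\left(TY\right)\right)^{\vee}\simeq Or\left(TX\right)^{\vee}$ appearing in the statement of the proposition is exactly the one induced by $\kappa$ through (\ref{eq:ls iso needed}), so everything is compatible; this is a routine unwinding of Definition \ref{def:oriented embedding} together with the $2$-of-$3$ orientation conventions of $\S$\ref{sub:2 of 3 orientation}.
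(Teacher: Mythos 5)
Your proposal is correct and is exactly the derivation the paper intends: the corollary is obtained by specializing the Equivariant Poincare duality proposition to $\kk=\underline{\cc}$ and $\omega_{X}=1$, with the local-system identifications unwinding as you describe. No further comment is needed.
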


\section{\label{sec:Equivariant A8 algebras}Equivariant twisted $A_{\infty}$
algebras}

\subsection{The equivariant cyclic DGA, statement of results}

Let $L$ be an $n$-dimensional closed manifold equipped with a smooth
$\tb$-action. Since in our forthcoming papers we'll need to consider
non-orientable $L$, we will assume that $L$ is non-orientable. All
of the results hold, with the obvious changes, for $L$ oriented. 

We reintroduce the $\grading$-grading. We work over the unital commutative
real algebra $R=\cc\left[\vec{\alpha}\right]=\cc\left[\alpha_{1},...,\alpha_{n'}\right]$,
where $\deg\alpha_{i}=\left(2,0\right)$. Denote by $C^{CW}\left(L\right)$
the \emph{Cartan Weil $R$-DGA}:
\[
C^{CW}\left(L\right):=C^{CW}\left(L;\underline{\cc}\right)\oplus C^{CW}\left(L;Or\left(TL\right)\right)
\]
it is equipped with the differential $D$ and wedge product $\wedge$,
see $\S$\ref{sub:basic definitions}. It is $\grading$-graded. The
codimension $\zz$-component of the grading is as in $\S$\ref{sub:basic definitions}
and the $\zz/2$-component specifies the local system degree, as in
Example \ref{exa:De Rham DGA}.

We can think of $C^{CW}\left(L\right)$ as a deformation over $\ca$
of the $\cc$-DGA $C\left(L\right)$, in the sense of Remark \ref{rem:deformation explained},
as follows: take $G=G'=\left\{ 0\right\} $, the trivial gapping monoids,
$R=\cc\left[\vec{\alpha}\right]$, and $R'=\cc$. We have $\Lambda=\Lambda_{0}^{G}\left(R\right)=\ca$
and $\Lambda'=\Lambda_{0}^{G'}\left(R'\right)=\cc$, and there's an
obvious quotient map $\Lambda\overset{\vec{\alpha}=0}{\longrightarrow}\Lambda'$.
Let $\left(\bb^{CW},D+\wedge\right)$ and $\left(\bb,d+\wedge\right)$
denote the differential coalgebras corresponding to the DGA's $C^{CW}\left(L\right)$
and $C\left(L\right)$, as in Example \ref{exa:DGA} and Proposition
\ref{prop:Differentials are twisted A8 algebras} (the sloppy-but-visually-suggestive
notation $d+\wedge,D+\wedge$ for the differentials will not be used
outside this introduction). We then have $\left(d+\wedge\right)=\left(D+\wedge\right)\otimes_{R}\mbox{id}_{R'}$.

In the next subsection we'll present a simple condition for when a
deformation $\left(\bb,m\right)$ of $C\left(L\right)$ can be extended
to a deformation $\left(\bb^{CW},m^{CW}\right)$ of $C^{CW}\left(L\right)$.
In this case we obtain a commutative square of extensions
\begin{equation}
\xymatrix{\left(\bb_{\cc},d+\wedge\right) & \left(\bb_{\cc\left[\vec{\alpha}\right]}^{CW},D+\wedge\right)\ar[l]_{\vec{\alpha}=0}\\
\left(\bb_{\Lambda_{0}^{G}\left(\cc\right)},m\right)\ar[u]^{\tau=0} & \left(\bb_{\Lambda_{0}^{G}\left(\cc\left[\vec{\alpha}\right]\right)}^{CW},m^{CW}\right)\ar[l]_{\vec{\alpha}=0}\ar[u]^{\tau=0}
}
\label{eq:square of deformations}
\end{equation}

Things become more interesting when we consider cyclic symmetry. We
use integration $\int:C^{CW}\left(L\right)\to R\left[-n,1\right]$.
to define a pairing (cf. Eq (\ref{eq:pairing from functional})) ${\left\langle \cdot\right\rangle ^{CW}:C^{CW}\left(L\right)\otimes_{R}C^{CW}\left(L\right)\to C^{CW}\left(L\right)\left[-n,1\right]}$.

\begin{lem}
\label{lem:CW pairing is non-degenerate}$\left\langle \cdot\right\rangle ^{CW}$
is non-degenerate.\end{lem}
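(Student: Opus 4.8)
The plan is to reduce this to the chain-level (twisted) Poincar\'e duality of ordinary de Rham theory on the closed manifold $L$, combined with an averaging argument over $\tb$ and some elementary bookkeeping in the variables $\alpha_1,\dots,\alpha_{n'}$. First I would recall what the pairing is: applying Eq (\ref{eq:pairing from functional}) to the cyclic DGA $C^{CW}(L)$ (cf. Example \ref{exa:DGA}) gives $\langle x\otimes y\rangle^{CW}=(-1)^{\codim x\cdot\codim y+\codim x}\int x\wedge y$, where $\int\colon C^{CW}(L)\to R[-n,1]$ is the $R$-linear integration map of Eq (\ref{eq:equivariant integration}); concretely, writing $w=\sum_{J}w_J\alpha^J$ with $w_J\in\Omega(L;\underline\cc\oplus Or(TL))^{\tb}$, one has $\int w=\sum_J(\int_L w_J)\alpha^J$, and $\int_L w_J\ne0$ only when $w_J$ is a top-degree $Or(TL)$-valued form. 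Since the sign is a unit, it suffices to show that for every nonzero $u\in C^{CW}(L)$ there is $v\in C^{CW}(L)$ with $\int u\wedge v\ne0$. As $\langle\cdot\rangle^{CW}$ is $\grading$-graded I may assume $u$ homogeneous, say $u\in C^{CW,(a,b)}(L)$, and write $u=\sum_J u_J\alpha^J$ with $u_J\in\Omega^{a-2|J|}(L;Or(TL)^{\otimes b})^{\tb}$; fix a multi-index $J_0$ with $u_{J_0}\ne0$.

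Next I would invoke the classical fact that on a closed manifold the ``wedge and integrate'' pairing $\Omega^{d}(L;\hh)\otimes\Omega^{n-d}(L;\hh^{\vee}\otimes Or(TL))\to\cc$, $(\omega,\eta)\mapsto\int_L\omega\wedge\eta$, is non-degenerate for every rank-one $\cc$-local system $\hh$: given $\omega\ne0$, choose a point where $\omega$ is nonzero and build, in a chart around it, a compactly supported $\eta$ for which $\omega\wedge\eta$ is a nonzero density (see \cite{bott+tu}). Applied with $\hh=Or(TL)^{\otimes b}$ and $d=a-2|J_0|$, this produces $v_0\in\Omega^{\,n-a+2|J_0|}(L;Or(TL)^{\otimes(b+1)})$ with $\int_L u_{J_0}\wedge v_0\ne0$. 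Passing to the $\tb$-average does not change this: since $u_{J_0}$ is $\tb$-invariant we have $\pi^{\tb}(u_{J_0}\wedge v_0)=u_{J_0}\wedge\pi^{\tb}v_0$, and $\pi^{\tb}$ commutes with pushforward to a point (cf. the averaging discussion in \S\ref{sub:equiv ang and euler forms}), so $\int_L u_{J_0}\wedge\pi^{\tb}v_0=\int_L\pi^{\tb}(u_{J_0}\wedge v_0)=\int_L u_{J_0}\wedge v_0\ne0$. Thus I may assume $v_0\in\Omega^{\,n-a+2|J_0|}(L;Or(TL)^{\otimes(b+1)})^{\tb}\subset C^{CW}(L)$.

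Finally I would take $v:=v_0$ (no $\alpha$-factor is needed) and compute $\int u\wedge v=\sum_J\big(\int_L u_J\wedge v_0\big)\alpha^J$. For $J$ with $|J|\ne|J_0|$ the form $u_J\wedge v_0$ has codimension degree $n+2(|J_0|-|J|)\ne n$, so $\int_L u_J\wedge v_0=0$; for $J$ with $|J|=|J_0|$ but $J\ne J_0$ the term $\int_L u_J\wedge v_0$ contributes to the coefficient of $\alpha^{J}$, not $\alpha^{J_0}$. Hence the coefficient of $\alpha^{J_0}$ in $\int u\wedge v$ equals $\int_L u_{J_0}\wedge v_0\ne0$, so $\int u\wedge v\ne0$ and therefore $\langle u,v\rangle^{CW}\ne0$, which proves non-degeneracy.

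The only non-formal input is the cited chain-level twisted Poincar\'e duality on the closed manifold $L$; the rest is the averaging identity already available from \S\ref{sub:equiv ang and euler forms} and elementary bookkeeping with the monomials $\alpha^J$, so I do not expect a real obstacle. In particular this is the easy ``pointwise'' non-degeneracy of the chain-level pairing, quite separate from the perfectness of the \emph{induced} pairing on cohomology, which is the genuinely delicate statement treated later under the even-cohomology hypothesis.
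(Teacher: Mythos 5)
Your proof is correct and takes essentially the same route as the paper's: decompose $u$ into $\alpha$-monomials, pick a nonzero coefficient $u_{J_0}$, reduce to non-degeneracy of the non-equivariant pairing on $C(L)$, average the dual form over $\tb$, and read off the coefficient of $\alpha^{J_0}$. The explicit bump-form construction and the degree bookkeeping for $|J|\neq|J_0|$ are harmless additions (the paper simply invokes non-degeneracy of $\left\langle \cdot\right\rangle$ on $C(L)$ from Example \ref{exa:De Rham DGA}, and the off-diagonal terms contribute to other monomials anyway).
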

\begin{proof}
Suppose $0\neq\omega\in C^{CW}\left(L\right)$. Write $\omega=\sum_{I\in\zz_{\geq0}^{n'}}\omega_{I}\alpha^{I}$
with $\omega_{I}\in C\left(L\right)^{\mathbb{T}}$ and ${\alpha^{I}=\alpha^{\left(i_{1},...,i_{n'}\right)}:=\prod_{r=1}^{n'}\alpha_{r}^{i_{r}}}$.
Choose some $I_{0}$ so $\omega_{I_{0}}\neq0$. Since $\left\langle \cdot\right\rangle $
is non-degenerate, there's some $\omega'\in C\left(L\right)$ with
$\left\langle \omega_{I_{0}},\omega'\right\rangle \neq0$. We have
${0\neq\left\langle \omega_{I_{0}},\omega'\right\rangle =\left\langle \pi^{\tb}\omega_{I_{0}},\pi^{\tb}\omega'\right\rangle =\left\langle \omega_{I_{0}},\pi^{\tb}\omega'\right\rangle }$
(see the proof of Proposition \ref{prop:equiv ang form exists} for
the definition and some properties of $\pi^{\tb}$) so we may assume
without loss of generality that $\eta\in C\left(L\right)^{\mathbb{T}}\subset C^{CW}\left(L\right)$.
Since the coefficient of $\alpha^{I_{0}}$ in $\left\langle \omega,\eta\right\rangle ^{CW}$
is $\left\langle \omega_{I_{0}},\eta\right\rangle \neq0$ we find
that $\left\langle \cdot\right\rangle ^{CW}$ is non-degenerate. 
\end{proof}
For general $L$, the induced pairing on cohomology need not be perfect.
Therefore, we will need to introduce an additional assumption on $L$. 
\begin{defn}
\label{def:even cohomology}We say a closed manifold $L$ has \emph{even
cohomology }if 
\[
H^{2i+1,j}\left(C\left(L\right),d\right)=0
\]
for all $i,j$. 
\end{defn}
In particular we find that $L$ is both $\underline{\cc}$-separated
and $Or\left(TL\right)$-separated, see Definition \ref{def:separated},
so by Proposition \ref{prop:equivariant cohomology of separated manifolds}
we have an isomorphism of $\ca$-modules 
\[
\Phi:\cc\left[\vec{\alpha}\right]\otimes_{\cc}H\left(L\right)\to H^{CW}\left(L\right)
\]
(recall $H\left(L\right)=H^{\bullet,0}\left(L\right)\oplus H^{\bullet,1}\left(L\right)$
combines cohomology with values in $\underline{\cc}$ and in $Or\left(TL\right)$).
\begin{lem}
\label{lem:equivariant pairing normalized}We can choose the isomorphism
$\Phi$ so that 
\begin{equation}
\left\langle \Phi\left(f_{1}\otimes\psi_{1}\right),\Phi\left(f_{2}\otimes\psi_{2}\right)\right\rangle ^{CW}=f_{1}\cdot f_{2}\cdot\left\langle \psi_{1},\psi_{2}\right\rangle \label{eq:pairing preserved}
\end{equation}
for $\psi_{i}\in H\left(L\right)$ and $f_{i}\in\cc\left[\vec{\alpha}\right]$. 
\end{lem}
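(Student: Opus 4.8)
The plan is to reduce Eq.~(\ref{eq:pairing preserved}) to the case $f_1=f_2=1_\Lambda$ (both $\langle\cdot\rangle^{CW}$ and $\Phi$ are $\ca$-linear, so the general case follows) and then to correct a given isomorphism order by order in the $\mf$-adic filtration of $\ca$, $\mf=(\alpha_1,\dots,\alpha_{n'})$, until the induced pairing becomes the constant extension of $\langle\cdot\rangle$. As a starting point I would take the $\Phi_0$ built in the proof of Proposition~\ref{prop:equivariant cohomology of separated manifolds}: it is a grading-preserving $\ca$-module isomorphism with $q\circ\Phi_0=\mathrm{id}_{H(L)}$, where $q$ sets $\vec\alpha=0$. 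The crucial preliminary observation is that, since $L$ has even cohomology, every class in $H(L)$ — and hence, as $\deg\alpha_i=(2,0)$, every class in $H^{CW}(L)\simeq\ca\otimes H(L)$ — lies in even codimension degree, so the graded antisymmetry~(\ref{eq:bracket antisymmetry}) of $\langle\cdot\rangle^{CW}$, and of the pairing $\langle\cdot\rangle$ it induces on $H(L)$, collapses to plain \emph{symmetry}.

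For any grading-preserving $\ca$-linear $\Phi$ with $q\circ\Phi=\mathrm{id}$, put $B(\psi_1,\psi_2):=\langle\Phi(1\otimes\psi_1),\Phi(1\otimes\psi_2)\rangle^{CW}\in\ca[-n,1]$; this is a symmetric $\cc$-bilinear form on $H(L)$ with coefficients in $\ca$, and it reduces modulo $\mf$ to $\langle\cdot\rangle$, because $\int^{CW}$ and $\wedge$ restrict at $\vec\alpha=0$ to ordinary integration and wedge on $C(L)$ and $\Phi(1\otimes\psi_i)$ is represented by a $D$-closed equivariant form whose $\vec\alpha=0$ part represents $\psi_i$. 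I would then induct on $t\ge1$: assuming $B-\langle\cdot\rangle\in\mf^{t}\ca[-n,1]$, write its leading part as $\sum_{|I|=t}\delta_I\alpha^I$, where each $\delta_I\colon H(L)^{\otimes2}\to\cc$ is symmetric and supported on pairs with $\codim\psi_1+\codim\psi_2=n+2t$. By perfectness of $\langle\cdot\rangle$ on $H(L)$ (non-equivariant Poincar\'e duality, cf.\ Example~\ref{exa:De Rham DGA}), $\delta_I(\psi_1,\psi_2)=\langle D_I\psi_1,\psi_2\rangle$ for a unique $D_I\colon H(L)\to H(L)$ of bidegree $(-2t,0)$, and symmetry of $\delta_I$ together with symmetry of $\langle\cdot\rangle$ forces $D_I$ to be self-adjoint: $\langle D_I\psi_1,\psi_2\rangle=\langle\psi_1,D_I\psi_2\rangle$. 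Replace $\Phi$ by $\Phi\circ A$, where $A:=\mathrm{id}-\tfrac12\sum_{|I|=t}\alpha^{I}(\mathrm{id}_{\ca}\otimes D_I)$. Since the $D_I$ strictly lower the (bounded) codimension degree of $H(L)$, $A-\mathrm{id}$ is nilpotent, so $A$ is a grading-preserving $\ca$-module automorphism, $\Phi\circ A$ is again an isomorphism, and $q\circ(\Phi\circ A)=\mathrm{id}$ still holds. Expanding $\langle(\Phi A)(1\otimes\psi_1),(\Phi A)(1\otimes\psi_2)\rangle^{CW}$: the term with $A-\mathrm{id}$ on both sides lies in $\mf^{2t}\subseteq\mf^{t+1}$, and the two terms with $A-\mathrm{id}$ on one side contribute $-\tfrac12\langle D_I\psi_1,\psi_2\rangle$ and $-\tfrac12\langle\psi_1,D_I\psi_2\rangle$ at order $t$, summing to $-\delta_I$ by self-adjointness; hence the new defect lies in $\mf^{t+1}$. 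Because $H(L)$ is bounded, $B-\langle\cdot\rangle$ has bounded $\vec\alpha$-degree, so after finitely many corrections $B=\langle\cdot\rangle$, which is Eq.~(\ref{eq:pairing preserved}).

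The step I expect to be the crux is the self-adjointness of $D_I$ and the resulting ``complete the square'' correction by $\tfrac12 D_I$: this is exactly where the even-cohomology hypothesis is used, since for a genuinely antisymmetric induced pairing this correction would be the wrong one and one would instead need a Darboux-type normal-form argument. The remaining work is bookkeeping — verifying that each $A$ is genuinely invertible over $\ca$ and that $q\circ\Phi=\mathrm{id}$ is preserved, both of which hinge on the $D_I$ having strictly negative codimension degree.
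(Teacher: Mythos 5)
Your argument is correct, but it takes a genuinely different route from the paper's, which is a one\-/step linear\-/algebra correction rather than an order\-/by\-/order one. The paper fixes a basis $\left\{ \gamma_{i}\right\} _{i=0}^{2N-1}$ of $H\left(L\right)$ adapted to the local\-/system splitting $H^{\bullet,0}\oplus H^{\bullet,1}$ with $\left\langle \gamma_{i},\gamma_{N+j}\right\rangle =\delta_{ij}$, notes that the Gram matrix $C_{ij}=\left\langle \Phi\left(1\otimes\gamma_{i}\right),\Phi\left(1\otimes\gamma_{N+j}\right)\right\rangle ^{CW}$ equals the identity mod $\mf$ and is therefore invertible over $\ca$, and replaces $\Phi$ by $\Phi\circ c$ where $c$ acts by $C^{-1}$ on the $H^{\bullet,1}$ block and by the identity on the $H^{\bullet,0}$ block. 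The structural fact this exploits --- and which your proof does not use --- is that $\int$ vanishes except on $C^{n,1}\left(L\right)$, so the pairing only couples the two local\-/system degrees; one can therefore correct a single block by the full inverse in one step, with no need to decide whether the defect is symmetric or to split a correction in half. Your proof instead leans on the symmetry of the pairing (all classes having even codimension degree makes the sign in (\ref{eq:bracket antisymmetry}) equal to $+1$) and kills the defect inductively in the $\mf$-adic filtration via self\-/adjoint operators $D_{I}$ and the $\tfrac{1}{2}$\-/correction; this is sound (the quadratic term lands in $\mf^{2t}\subseteq\mf^{t+1}$, the $D_{I}$ strictly lower codimension degree so $A$ is invertible and $q\circ\Phi=\mbox{id}$ is preserved, and boundedness of $H\left(L\right)$ terminates the induction), and it would generalize to any symmetric perfect pairing lacking the off\-/diagonal block structure, but it is more machinery than the lemma requires. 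One point worth making explicit in your write\-/up: the same vanishing of $\left\langle \cdot\right\rangle ^{CW}$ on pairs of equal local\-/system degree is what forces each $\delta_{I}$ to be supported on pairs with $\ls\psi_{1}+\ls\psi_{2}=1$, hence $D_{I}$ preserves the $\zz/2$\-/grading and $A$ is genuinely grading\-/preserving.
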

Hereafter, when there's no risk of confusion, we may use $\left\langle \cdot\right\rangle $
and $\left\langle \cdot\right\rangle ^{CW}$ to denote the induced
pairings on $H\left(L\right)$ and $H^{CW}\left(L\right)$, respectively.

\begin{proof}
Fix some basis $\left\{ \gamma_{i}\right\} _{i=0}^{2N-1}$ for $H\left(L\right)$
so that $\left\{ \gamma_{i}\right\} _{i=0}^{N-1}$ is a basis for
$H^{\bullet,0}\left(L\right)$ and $\left\{ \gamma_{i}\right\} _{i=N}^{2N-1}$
is the dual basis for $H^{\bullet,1}\left(L\right)$, i.e. we have
$\left\langle \gamma_{i},\gamma_{N+j}\right\rangle =\delta_{i,j}$
for ${0\leq i,j\leq N-1}$. It follows from $\cc\left[\vec{\alpha}\right]$-linearity
of $\left\langle \cdot\right\rangle ^{CW}$ that 
\[
C_{ij}:=\left\langle \Phi\left(1\otimes\gamma_{i}\right),\Phi\left(1\otimes\gamma_{N+j}\right)\right\rangle =\delta_{ij}\mod\mathfrak{m},
\]
so that $\left(C_{ij}\right)$ is an invertible $N\times N$ matrix
with coefficients in $\cc\left[\vec{\alpha}\right]$. Let $\left(C_{ij}'\right)$
denote its inverse, and let $ $$c:\ca\otimes H\left(L\right)\to\ca\otimes H\left(L\right)$
denote the $\ca$-linear automorphism defined in the basis $\gamma_{i}$
by the block-diagonal matrix $\begin{pmatrix}\mbox{id} & 0\\
0 & C_{ij}'
\end{pmatrix}$. The isomorphism $\Phi\circ c$ satisfies Eq (\ref{eq:pairing preserved}).\end{proof}
\begin{cor}
\label{cor:perfect equivariant pairing}If $L$ has even cohomology,
then the equivariant pairing $\left\langle \cdot\right\rangle ^{CW}$
induces a perfect pairing on $H^{CW}\left(L\right)$.
\end{cor}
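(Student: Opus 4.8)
The plan is to transport the statement across the isomorphism $\Phi$ furnished by Lemma \ref{lem:equivariant pairing normalized} and reduce it to perfectness of the ordinary Poincar\'e pairing on $H(L)$. Since $L$ has even cohomology it is both $\underline{\cc}$-separated and $Or(TL)$-separated, so Proposition \ref{prop:equivariant cohomology of separated manifolds} together with Lemma \ref{lem:equivariant pairing normalized} gives an isomorphism of $\ca$-modules $\Phi:\ca\otimes_\cc H(L)\to H^{CW}(L)$ satisfying Eq (\ref{eq:pairing preserved}), i.e. $\langle\Phi(f_1\otimes\psi_1),\Phi(f_2\otimes\psi_2)\rangle^{CW}=f_1 f_2\langle\psi_1,\psi_2\rangle$ for $\psi_i\in H(L)$ and $f_i\in\cc[\vec{\alpha}]$. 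Because $L$ is closed, $H(L)$ is finite dimensional over $\cc$, so $H^{CW}(L)$ is a free graded $\ca$-module of finite rank; this is what makes the notion of ``perfect pairing'' reducible to a statement about a matrix.

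Next I would fix the homogeneous $\cc$-basis $\{\gamma_i\}_{i=0}^{2N-1}$ of $H(L)$ used in the proof of Lemma \ref{lem:equivariant pairing normalized}: $\{\gamma_i\}_{i=0}^{N-1}$ a basis of $H^{\bullet,0}(L)$ and $\{\gamma_{N+j}\}_{j=0}^{N-1}$ the $\langle\cdot\rangle$-dual basis of $H^{\bullet,1}(L)$, so that $\langle\gamma_i,\gamma_{N+j}\rangle=\delta_{ij}$. Such a basis exists precisely because $\langle\cdot\rangle$ is perfect on $H(L)$ --- the De Rham DGA of $L$ with integration being a cyclic DGA (Example \ref{exa:De Rham DGA}), which is Poincar\'e duality with values in $Or(TL)$. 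Then $\{\Phi(1\otimes\gamma_i)\}_{i=0}^{2N-1}$ is a homogeneous $\ca$-basis of $H^{CW}(L)$, and by Eq (\ref{eq:pairing preserved}) the matrix of $\langle\cdot\rangle^{CW}$ in this basis is the constant integer matrix $(\langle\gamma_i,\gamma_j\rangle)_{ij}$. Since $\langle\gamma_i,\gamma_j\rangle$ vanishes whenever $\gamma_i,\gamma_j$ have the same local system degree --- their wedge product then lands in $H^{\bullet,0}(L)$, on which $\int$ is zero --- this matrix is block off-diagonal with invertible $N\times N$ blocks (an identity block from the normalization, and a diagonal $\pm1$ block from antisymmetry, Eq (\ref{eq:bracket antisymmetry})); its determinant is $\pm1$, a unit in $\ca$. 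Hence the induced map $H^{CW}(L)\to\mathbf{Mod}_R(H^{CW}(L),R[-n,1])$ is represented in these bases by an invertible matrix over $R=\cc[\vec{\alpha}]$, so it is an isomorphism; that is, $\langle\cdot\rangle^{CW}$ induces a perfect pairing on $H^{CW}(L)$.

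There is essentially no obstacle here: all the real content has already been done in Lemma \ref{lem:equivariant pairing normalized}, after which the corollary is the elementary fact that a perfect pairing on a finite dimensional $\cc$-vector space stays perfect under extension of scalars to $\cc[\vec{\alpha}]$. The only thing to watch is the (routine) bookkeeping that the Gram matrix in the chosen basis has the block form just described, so that its determinant is visibly a unit; I would spend at most a sentence on this.
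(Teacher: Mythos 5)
Your proposal is correct and matches the paper's (implicit) argument: the corollary is stated as an immediate consequence of Lemma \ref{lem:equivariant pairing normalized}, which transports the pairing along $\Phi$ to the constant Gram matrix of the perfect non-equivariant Poincar\'e pairing on $H(L)$, an invertible matrix over $\cc\subset\ca$. Your elaboration of why that matrix is invertible (block off-diagonal in local system degree, with blocks $\pm\mathrm{id}$ by the normalization and antisymmetry) is exactly the routine bookkeeping the paper omits.
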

We will see that the equivariant deformation of a cyclic invariant
algebra is cyclic. In $\S$\ref{sub:Cyclic equivariant retraction}
we will construct a unital cyclic retraction of $C^{CW}\left(L\right)$
to $H^{CW}\left(L\right)$. This means that we can apply Theorem \ref{thm:homological perturbation lemma}
to construct minimal models for equivariant extensions of invariant
twisted $A_{\infty}$ algebras.

\subsection{\label{sub:extension of invariant algebras}Equivariant extension
of invariant deformations}

Let $L$ be a non-orientable closed $\tb$-manifold, let $C=C\left(L\right)$
be the associated DGA over $R=\cc$ (cf. Example \ref{exa:De Rham DGA}).
Let $\bb=\bb_{\Lambda_{0}^{G}\left(\cc\right)}\left(C^{G}\right)$
be the associated bar coalgebra, defined over the Novikov ring $\Lambda_{0}^{G}\left(\cc\right)$.
For $1\leq a\leq n'$ define $\iota_{\xi_{a}}'$ by $\iota_{\xi_{a}}'x=\left(-1\right)^{\codim x}\iota_{\xi_{a}}$.
Let $\hat{\iota}_{\xi_{a}}$ denote the $\left(\mbox{id}_{\bb},\mbox{id}_{\bb}\right)$-coderivation
of degree $-1$ of $\bb$, corresponding to $\iota'_{\xi_{a}}$. Explicitly,
we have 
\[
\hat{\iota}_{\xi_{a}}\left(x_{1}\otimes\cdots\otimes x_{k}\right)=\sum_{i=1}^{k}\left(-1\right)^{\sum_{a=1}^{i-1}\left(\codim x_{a}-1\right)}x_{1}\otimes\cdots\otimes\iota_{\xi_{a}}'x_{i}\otimes\cdots\otimes x_{k}.
\]

\begin{defn}
\label{def:invariant twisted A8-1}$\left(C\left(L\right),\left\{ m_{k,\beta}\right\} \right)$
will be called $\mathbb{T}$-invariant if 
\begin{equation}
\iota_{\xi_{a}}'m_{k,\beta}\left(x_{1},...,x_{k}\right)=\sum_{i=1}^{k}\left(-1\right)^{1+\mu\left(\beta\right)+\sum_{j=1}^{i-1}\left(x_{j}-1\right)}m_{k,\beta}\left(x_{1},...,\iota_{\xi_{a}}'x_{i},...,x_{k}\right)\label{eq:m invariance-1}
\end{equation}
for all $\left(k,\beta\right)\neq\left(1,\mathbf{0}\right)$, $x_{1},...,x_{k}\in C\left(L\right)$
and $1\leq a\leq n'$. 
\end{defn}
For $j=1,2$ let $h^{j}:\bb\left[-d_{j}\right]\to\bb$ be maps in
$\hfgvect_{\cc}$, whose degree is ${d_{j}=\codim h^{j}}$. We define
the graded commutator $\left[h^{1},h^{2}\right]:=h^{1}\circ h^{2}-\left(-1\right)^{d_{1}\cdot d_{2}}h^{2}\circ h^{1}$.
\begin{lem}
\label{lem:bracket of coderivations}If $h^{1},h^{2}$ are $\left(\mbox{id}_{\bb},\mbox{id}_{\bb}\right)$-coderivations
then $\left[h^{1},h^{2}\right]$ is a degree $\left(\codim h^{1}+\codim h^{2}\right)$
$\left(\mbox{id}_{\bb},\mbox{id}_{\bb}\right)$-coderivation. In terms
of the bijection (\ref{eq:coderivations in components}), we have
\begin{multline*}
\left[h^{1},h^{2}\right]_{k,\beta}\left(x_{1}\otimes\cdots\otimes x_{k}\right)=\\
\sum\left(-1\right)^{\codim h_{2}\cdot\sum_{j<i}\left(\codim x_{j}-1\right)}h_{k_{1},\beta_{1}}^{1}\left(\cdots\otimes h_{k_{2},\beta_{2}}^{2}\left(x_{i}\otimes\cdots\right)\otimes\cdots\right)-\\
-\sum\left(-1\right)^{\codim h_{1}\cdot\left(\codim h_{2}+\sum_{j<i}\left(\codim x_{j}-1\right)\right)}h_{k_{2},\beta_{2}}^{2}\left(\cdots\otimes h_{k_{1},\beta_{1}}^{1}\left(x_{i}\otimes\cdots\right)\otimes\cdots\right)
\end{multline*}
\end{lem}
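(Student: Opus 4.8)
The plan is to prove the two assertions in turn: first that $[h^1,h^2]$ is an $\left(\mathrm{id}_\bb,\mathrm{id}_\bb\right)$-coderivation of degree $\codim h^1+\codim h^2$, and then to compute its components under the bijection (\ref{eq:coderivations in components}), extended to arbitrary degree as in Remark \ref{rem:general coderivation in components}; write $d_j=\codim h^j$ throughout. For the first assertion, note that $h^1\circ h^2$ and $h^2\circ h^1$ are composites of continuous, filtration-preserving maps, hence well defined, so $[h^1,h^2]$ is too; and counitality is immediate since $\eta\circ h^j=0$ forces $\eta\circ(h^1h^2)=(\eta h^1)h^2=0$ and likewise $\eta\circ(h^2h^1)=0$. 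For the comultiplication identity, apply $\Delta$ to $h^1h^2$ and use the coderivation identity for $h^1$ and then for $h^2$:
\[
\Delta\,h^1h^2=[(h^1\otimes\mathrm{id})\pm(\mathrm{id}\otimes h^1)]\,[(h^2\otimes\mathrm{id})\pm(\mathrm{id}\otimes h^2)]\,\Delta,
\]
with the Koszul signs of (\ref{eq:graded coderivation sign}). Expanding the product of the two bracketed operators on a codimension-homogeneous $x\otimes y$ gives four terms; the two ``mixed'' ones, $h^2x\otimes h^1y$ and $h^1x\otimes h^2y$, pick up the signs $(-1)^{d_1d_2+d_1\codim x}$ and $(-1)^{d_2\codim x}$, and the very same terms occur with identical signs in $(-1)^{d_1d_2}\,\Delta\,h^2h^1$. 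Hence they cancel in $\Delta\,h^1h^2-(-1)^{d_1d_2}\,\Delta\,h^2h^1$, leaving exactly $[([h^1,h^2]\otimes\mathrm{id})\pm(\mathrm{id}\otimes[h^1,h^2])]\,\Delta$ with the degree-$(d_1+d_2)$ Koszul sign, which is the coderivation identity. The one point needing care here is that the sign produced when $h^1$ hits a factor whose codimension degree has been shifted by $d_2$ agrees with the sign in the opposite order.

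For the component formula, recall from Remark \ref{rem:general coderivation in components} that a degree-$d$ coderivation is recovered from $\rho=\pi_1\circ h$ via $h=\Xi^{d}\circ(\mathrm{id}_\bb\otimes\rho\otimes\mathrm{id}_\bb)\circ\Delta^{\circ2}$; concretely $h$ acts on $x_1\otimes\cdots\otimes x_k$ by inserting $\rho$ on each consecutive sub-block $x_i\otimes\cdots\otimes x_{i+\ell-1}$, weighted by the sign $(-1)^{d\sum_{j<i}(\codim x_j-1)}$ supplied by $\Xi^{d}$. Since $\pi_1\circ[h^1,h^2]=\pi_1h^1h^2-(-1)^{d_1d_2}\pi_1h^2h^1$, I would compute $\pi_1\circ h^1\circ h^2$ by writing $h^2$ in this inserted-block form (contributing the $\Xi^{d_2}$-sign $(-1)^{\codim h^2\cdot\sum_{j<i}(\codim x_j-1)}$) and then applying $\pi_1\circ h^1$, which—having no factors to its left—merely extracts the full-length component of $h^1$ on the resulting tensor, with trivial $\Xi^{d_1}$-sign. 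Carrying the Novikov coefficients $\lambda(\beta_j)=\epsilon^{\mu(\beta_j)}T^{E(\beta_j)}$ to the front via the bimodule structure of the bar complex—exactly the manipulation that produces $\mathsf{C}(x_1,\dots,x_k;\beta)$ in the proof of Proposition \ref{prop:Differentials are twisted A8 algebras}—and reading off components by length and by $\beta$ through the $R\to\Lambda$ adjunction in (\ref{eq:coderivations in components}), one obtains the first sum of the displayed formula. The computation of $(-1)^{d_1d_2}\pi_1h^2h^1$ is identical with the roles of $h^1$ and $h^2$ swapped: $h^1$ is now the inner operator, so its $\Xi^{d_1}$-sign $(-1)^{d_1\sum_{j<i}(\codim x_j-1)}$ survives, and multiplying by $(-1)^{d_1d_2}$ gives $(-1)^{\codim h^1\cdot(\codim h^2+\sum_{j<i}(\codim x_j-1))}$, the sign of the second sum.

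The main obstacle is purely one of sign bookkeeping: tracking the $\Xi^{d}$-signs together with the signs generated by the bimodule twist when the $\Lambda$-coefficients emitted by the inner coderivation are pushed past the leading tensor factors. There is no conceptual content beyond what already appears in Propositions \ref{prop:Differentials are twisted A8 algebras} and \ref{prop:morphism in components}; once the conventions of $\S$\ref{sub:bar coalgebra} are unwound, the cancellation of mixed terms in the first part and the identification of components in the second are both forced.
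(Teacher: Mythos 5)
Your proposal is correct and matches the paper's (essentially omitted) argument: the paper disposes of the first claim by ``dualizing the corresponding statement for algebras'' --- which is precisely your expansion of $\Delta h^{1}h^{2}$ via the coderivation identity and the cancellation of the two mixed terms in the graded commutator --- and calls the component formula ``straightforward,'' which your $\Xi^{d}$-sign bookkeeping makes explicit. One small remark: pushing the Novikov coefficient $\lambda\left(\beta_{2}\right)$ emitted by the inner coderivation past $x_{1},\dots,x_{i-1}$ produces additional $\mu\left(\beta_{2}\right)$-dependent signs (as in the proof of Proposition \ref{prop:Differentials are twisted A8 algebras}) that are absent from the displayed formula; this is an imprecision of the statement rather than of your argument, and it is harmless in the paper's applications, where the inner coderivation always has $\beta_{2}=\mathbf{0}$.
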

\begin{proof}
The first statement holds for any coalgebra, and is proved by dualizing
the proof of the corresponding statement for algebras. The second
claim is straightforward.
\end{proof}
The graded Jacobi identity 
\begin{equation}
\left(-1\right)^{\codim h_{1}h_{3}}\left[h_{1},\left[h_{2},h_{3}\right]\right]+\left(-1\right)^{\codim h_{1}\codim h_{2}}\left[h_{2},\left[h_{3},h_{1}\right]\right]+\left(-1\right)^{\codim h_{2}\codim h_{3}}\left[h_{3},\left[h_{1},h_{2}\right]\right]=0\label{eq:graded jacobi}
\end{equation}
holds, making the $\left(\mbox{id}_{\bb},\mbox{id}_{\bb}\right)$-coderivations
into a graded Lie algebra.

If we set $\partial=m-\hat{m}_{1,0}$ then we can write Eq (\ref{eq:m invariance-1})
as 
\[
\left[\hat{\iota}_{\xi_{a}},\partial\right]:=\hat{\iota}_{\xi_{a}}\partial+\partial\hat{\iota}_{\xi_{a}}=0\;\mbox{for }1\leq a\leq n',
\]
which implies that 
\[
\left[\hat{\iota}_{\xi_{a}},m\right]=\left[\hat{\iota}_{\xi_{a}},\hat{m}_{1,\mathbf{0}}\right]=\left(\iota_{\xi_{a}}'m_{1,\mathbf{0}}+m_{1,\mathbf{0}}\iota_{\xi_{a}}'\right)^{\wedge}=-\hat{\lc}_{\xi_{a}}
\]
where ${\hat{\lc}_{\xi_{a}}\left(x_{1}\otimes\cdots\otimes x_{k}\right)=\sum_{j=1}^{k}\left(x_{1}\otimes\cdots\otimes\lc_{\xi_{a}}x_{j}\otimes\cdots\otimes x_{k}\right)}$.
Note that we have $\left[\hat{\iota}_{\xi_{a}},\hat{\iota}_{\xi_{b}}\right]=0,\left[\hat{\iota}_{\xi_{a}},\hat{\lc}_{\xi_{b}}\right]=0$
and $\left[\hat{\lc}_{\xi_{a}},\hat{\lc}_{\xi_{b}}\right]=0$ for
all $1\leq a,b\leq n'$, by Lemma \ref{lem:bracket of coderivations}
and the commutativity of the vector fields $\left\{ \xi_{a}\right\} $.
\begin{prop}
\label{prop:equivariant twisted extension-1}Let $L$ be a non-orientable
closed $\tb$-manifold, and let $\left(C\left(L\right),d,\wedge\right)$
denote the associated $\grading$-graded De Rham DGA over $\cc$.
Let $\left(C\left(L\right),\left\{ m_{k,\beta}\right\} \right)$ be
a $G$-gapped $\mathbb{T}$-invariant deformation of $\left(C\left(L\right),d,\wedge\right)$.
We define operations 
\[
m_{k,\beta}^{CW}:C^{CW}\left(L\right)^{\otimes k}\to C^{CW}\left(L\right)\left[2-k-\mu\left(\beta\right),\mu\left(\beta\right)\mod2\right]
\]
as follows. For $\left(k,\beta\right)\neq\left(1,\mathbf{0}\right)$
we take $m_{k,\beta}^{CW}$ to be the $\cc\left[\vec{\alpha}\right]$-linear
extension of the restriction of $m_{k,\beta}$ to the $\mathbb{T}$-invariant
forms. For $\left(k,\beta\right)=\left(1,\mathbf{0}\right)$ we set
\emph{
\[
m_{1,\mathbf{0}}^{CW}x=\left(-1\right)^{\codim x}Dx.
\]
}Then 

(a) $\left(C^{CW}\left(L\right),\left\{ m_{k,\beta}^{CW}\right\} \right)$
is a $G$-gapped deformation of $\left(C^{CW}\left(L\right),D,\wedge\right)$
over $R=\cc\left[\vec{\alpha}\right]$.

(b) If $\left(C\left(L\right),\left\{ m_{k,\beta}\right\} \right)$
is unital, then $\left(C^{CW}\left(L\right),\left\{ m_{k,\beta}^{CW}\right\} \right)$
is a unital deformation of $\left(C^{CW}\left(L\right),D,\wedge\right)$.

\noindent Assuming, in addition, that $L$ has even cohomology, we
have

(c) If $\left(C\left(L\right),\left\{ m_{k,\beta}\right\} ,\left\langle \cdot\right\rangle \right)$
is cyclic then $\left(C^{CW}\left(L\right),\left\{ m_{k,\beta}^{CW}\right\} ,\left\langle \cdot\right\rangle ^{CW}\right)$
is a cyclic deformation of $\left(C^{CW}\left(L\right),D,\wedge,\int^{CW}\right)$.\end{prop}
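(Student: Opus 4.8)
The plan is to work throughout in the differential-coalgebra picture of $\S$\ref{sub:bar coalgebra}. Write the coalgebra differential of the given algebra on $\bb=\bb(C(L)^{G})$ as $m=\hat m_{1,\mathbf 0}+\partial$, with $\partial=\sum_{(k,\beta)\neq(1,\mathbf 0)}\hat m_{k,\beta}$. The $\tb$-invariance hypothesis (Definition \ref{def:invariant twisted A8-1}) says $[\hat\iota_{\xi_{a}},\partial]=0$, hence $[\hat\iota_{\xi_{a}},m]=[\hat\iota_{\xi_{a}},\hat m_{1,\mathbf 0}]=-\hat\lc_{\xi_{a}}$, as recorded before the Proposition. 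The first step is a Cartan-calculus observation: feeding $\hat\iota_{\xi_{a}},m,m$ into the graded Jacobi identity (\ref{eq:graded jacobi}) and using $[m,m]=2m^{2}=0$ yields $[\hat\lc_{\xi_{a}},m]=0$, whose $(k,\beta)$-component (Lemma \ref{lem:bracket of coderivations}, with all signs trivial since $\hat\lc_{\xi_a}$ is even) reads $\lc_{\xi_{a}}m_{k,\beta}(x_{1},\dots,x_{k})=\sum_{i}m_{k,\beta}(\dots,\lc_{\xi_{a}}x_{i},\dots)$. In particular each $m_{k,\beta}$ carries $\tb$-invariant forms to $\tb$-invariant forms, so $(C(L)^{\tb},\{m_{k,\beta}|_{\mathrm{inv}}\})$ is a $G$-gapped twisted $A_{\infty}$ algebra over $\cc$; its base change along $\Lambda_{0}^{G}(\cc)\to\Lambda_{0}^{G}(\cc[\vec\alpha])$ (Remark \ref{rem:deformation explained}) is a twisted $A_{\infty}$ algebra whose coderivation I call $A+\partial^{CW}$, where $A=\hat m_{1,\mathbf 0}$ and $\partial^{CW}=\partial$ are now the $\ca$-linear extensions of their restrictions to invariant forms. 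By the bijection (\ref{eq:coderivations in components}), the operations $\{m_{k,\beta}^{CW}\}$ of the statement then assemble to a (tame, since $m_{0,\mathbf 0}^{CW}=0$) coderivation $m^{CW}=A+B+\partial^{CW}$ of $\bb^{CW}$, where $B=-\sum_{j}\alpha_{j}\hat\iota_{\xi_{j}}$; indeed $A+B$ is the coderivation of $x\mapsto(-1)^{\codim x}Dx$ with $D=d-\sum_{j}\alpha_{j}\iota_{\xi_{j}}$.

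For (a) it remains to verify $(m^{CW})^{2}=0$ and identify the low components. Since $A,B,\partial^{CW}$ all have odd codimension degree, $(m^{CW})^{2}=(A+\partial^{CW})^{2}+B^{2}+[A,B]+[B,\partial^{CW}]$, where $[X,Y]=XY+YX$. Now $(A+\partial^{CW})^{2}=0$ by the previous paragraph; $B^{2}=\sum_{j,l}\alpha_{j}\alpha_{l}\hat\iota_{\xi_{j}}\hat\iota_{\xi_{l}}=0$ since $\alpha_{j}\alpha_{l}$ is symmetric in $j,l$ while $\hat\iota_{\xi_{j}}\hat\iota_{\xi_{l}}=-\hat\iota_{\xi_{l}}\hat\iota_{\xi_{j}}$ (Lemma \ref{lem:bracket of coderivations} and $[\xi_{j},\xi_{l}]=0$); $[A,B]=-\sum_{j}\alpha_{j}[\hat\iota_{\xi_{j}},\hat m_{1,\mathbf 0}]=\sum_{j}\alpha_{j}\hat\lc_{\xi_{j}}$, which vanishes on $\bb^{CW}$ because every tensor entry is $\tb$-invariant; and $[B,\partial^{CW}]=-\sum_{j}\alpha_{j}[\hat\iota_{\xi_{j}},\partial^{CW}]$ is the $\ca$-linear extension of $-\sum_{j}\alpha_{j}[\hat\iota_{\xi_{j}},\partial]=0$ --- this is exactly where the invariance hypothesis is used. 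Hence $(m^{CW})^{2}=0$, so by Proposition \ref{prop:Differentials are twisted A8 algebras} we obtain a twisted $A_{\infty}$ structure; its $(1,\mathbf 0)$ and $(2,\mathbf 0)$ components are $x\mapsto(-1)^{\codim x}Dx$ and the wedge product with the sign of (\ref{eq:m2,0 vs. wedge}) (they agree with $m_{1,\mathbf 0},m_{2,\mathbf 0}$ on invariant forms and $(C(L),\{m_{k,\beta}\})$ is a deformation), while $m_{k,\mathbf 0}^{CW}=0$ for $k\geq3$ since the same holds for $m_{k,\mathbf 0}$. This proves (a).

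For (b): the DGA unit is the constant function $\mathbf e=1\in C^{CW}(L)^{0,0}$, which is $\tb$-invariant; $m_{1,\mathbf 0}^{CW}\mathbf e=D\mathbf e=d1-\sum_{j}\alpha_{j}\iota_{\xi_{j}}1=0$, and for $(k,\beta)\notin\{(1,\mathbf 0),(2,\mathbf 0)\}$ the map $m_{k,\beta}^{CW}(\dots,\mathbf e,\dots)$ is the $\ca$-linear extension of $m_{k,\beta}(\dots,\mathbf e,\dots)=0$; the relation (\ref{eq:unit relation}) for $m_{2,\mathbf 0}^{CW}$ follows from that for $m_{2,\mathbf 0}$ by $\ca$-linearity (write an argument as $\sum_{I}\alpha^{I}\omega_{I}$ with $\omega_{I}\in C(L)^{\tb}$). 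For (c): as in Example \ref{exa:De Rham DGA}, $\int^{CW}$ is a closed graded trace for $(C^{CW}(L),D,\wedge)$ --- $\int^{CW}D=0$ because $\int^{CW}d\omega=0$ by Stokes and $\int^{CW}\iota_{\xi_{j}}\omega=0$ for degree reasons --- so $(C^{CW}(L),D,\wedge,\int^{CW})$ is a cyclic DGA: the induced pairing $\langle\cdot\rangle^{CW}$ is antisymmetric (being the $\ca$-bilinear extension of the antisymmetric pairing on $C(L)$), non-degenerate by Lemma \ref{lem:CW pairing is non-degenerate}, and induces a perfect pairing on $H^{CW}(L)$ by Corollary \ref{cor:perfect equivariant pairing}, which is where the even-cohomology hypothesis enters. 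By (a) and Definition \ref{def:deformation} it then suffices to check the cyclic symmetry (\ref{eq:twisted cyclic symmetry}) for $\{m_{k,\beta}^{CW}\}$. For $(k,\beta)\in\{(1,\mathbf 0),(2,\mathbf 0)\}$ it is the cyclic-DGA relation (Example \ref{exa:DGA} applied to $(C^{CW}(L),D,\wedge,\int^{CW})$); for $(k,\beta)\neq(1,\mathbf 0)$, both sides of (\ref{eq:twisted cyclic symmetry}) are $\ca$-multilinear in the arguments, $m_{k,\beta}^{CW}$ and $\langle\cdot\rangle^{CW}$ restrict to $m_{k,\beta}$ and $\langle\cdot\rangle$ on $C(L)^{\tb}$, and the sign $\blacktriangle$ depends only on parities unaffected by multiplication by $\alpha^{I}$, so evaluating on arguments $\alpha^{I_{i}}\omega_{i}$ with $\omega_{i}\in C(L)^{\tb}$ reduces (\ref{eq:twisted cyclic symmetry}) to the cyclic symmetry of the original cyclic algebra, which holds by hypothesis. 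Hence $(C^{CW}(L),\{m_{k,\beta}^{CW}\},\langle\cdot\rangle^{CW})$ is a cyclic deformation of $(C^{CW}(L),D,\wedge,\int^{CW})$.

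The main obstacle is the verification $(m^{CW})^{2}=0$ in (a): the two nontrivial cancellations are $[A,B]=\sum_{j}\alpha_{j}\hat\lc_{\xi_{j}}=0$ (relying on restriction to invariant forms) and $[B,\partial^{CW}]=0$ (which is precisely the content of $\tb$-invariance), underpinned by the Cartan-calculus identity $[\hat\lc_{\xi_{a}},m]=0$ that makes $m^{CW}$ well-defined in the first place. The remaining parts are $\ca$-linear bookkeeping together with direct appeals to Lemma \ref{lem:CW pairing is non-degenerate}, Corollary \ref{cor:perfect equivariant pairing}, and Examples \ref{exa:DGA}, \ref{exa:De Rham DGA}.
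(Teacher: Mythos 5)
Your proposal is correct and follows essentially the same route as the paper: both pass to the bar coalgebra, use the graded Jacobi identity together with $[\hat\iota_{\xi_a},\partial]=0$ and $[\hat\iota_{\xi_a},\hat m_{1,\mathbf 0}]=-\hat\lc_{\xi_a}$ to show the operations commute with $\lc_{\xi_a}$ (hence preserve $\bb^{CW}$), and then observe that the square of the equivariant differential equals $-\sum_a\alpha_a\hat\lc_{\xi_a}$ up to the terms killed by invariance, which vanishes on invariant forms. The only difference is bookkeeping: the paper computes $[m',m']$ on the larger coalgebra $\bb'$ and restricts, while you restrict first and expand $(m^{CW})^2$ into the four brackets; parts (b) and (c) are handled identically (with your (c) spelling out the $\ca$-multilinearity reduction the paper calls ``immediate'').
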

\begin{proof}
To prove (a), we first show that the operations $\left\{ m_{k,\beta}^{CW}\right\} $
are well-defined (that is, take invariant forms to invariant forms)
and satisfy the twisted $A_{\infty}$ relations (\ref{eq:twisted A8 relations in components}).
One can check this directly, but the signs get somewhat cumbersome.
It is convenient to switch to the bar coalgebra, using the setup introduced
just before the proposition. Consider $\bb':=\bb\left(\left(C\left(L\right)\otimes\ca\right)^{G}\right)=\bb\hotimes_{\cc}\cc\left[\vec{\alpha}\right]$
(as always, completion is with respect to the energy filtration, with
$\cc\left[\vec{\alpha}\right]$ discrete). We'll use $\hat{\iota}_{\xi_{a}},\hat{\lc}_{\xi_{b}}$
and $m$ to denote, by the usual abuse of notation, the extension
of scalars of these coderivations from $\bb$ to $\bb'$. Consider
the $\left(\mbox{id}_{\bb'},\mbox{id}_{\bb'}\right)$-coderivation
\[
m':=m-\sum_{a}\alpha_{a}\hat{\iota}_{\xi_{a}}.
\]

We have

\begin{equation}
\left[m',m'\right]=-2\sum_{a}\left[\alpha_{a}\hat{\iota}_{\xi_{a}},m\right]=-2\sum_{a}\alpha_{a}\hat{\lc}_{\xi_{a}}.\label{eq:equiv m squares to lie derivative}
\end{equation}
By the graded Jacobi identity,
\begin{multline*}
0=\left[\hat{\iota}_{\xi_{a}},\left[m',m'\right]\right]+2\left[m',\left[\hat{\iota}_{\xi_{a}},m'\right]\right]=\\
\left[\hat{\iota}_{\xi_{a}},-2\sum_{b}\alpha_{b}\hat{\lc}_{\xi_{b}}\right]+2\left[m',\left[\hat{\iota}_{\xi_{a}},m-\sum_{b}\alpha_{b}\hat{\iota}_{\xi_{b}}\right]\right]=-2\left[m',\hat{\lc}_{\xi_{a}}\right],
\end{multline*}
which, by Lemma \ref{lem:bracket of coderivations}, implies 
\begin{equation}
\lc_{\xi_{a}}m_{k,\beta}\left(x_{1}\otimes\cdots\otimes x_{k}\right)=\sum_{i}m_{k,\beta}\left(\cdots\otimes\lc_{\xi_{a}}x_{i}\otimes\cdots\right),\;\left(k,\beta\right)\neq\left(1,\mathbf{0}\right).\label{eq:m commutes with lie}
\end{equation}
Eq (\ref{eq:m commutes with lie}) shows that $m'$ preserves the
sub-coalgebra 
\[
{\bb^{CW}:=\bb_{\Lambda_{0}^{G}\left(\ca\right)}\left(\left(C^{CW}\left(L\right)\right)^{G}\right)\subset\bb'}
\]
($\bb^{CW}$ is the completion of the $\Lambda_{0}^{G}\left(\ca\right)$-span
of $\left\{ x_{1}\otimes\cdots\otimes x_{k}\in\bb'|\lc_{\xi_{a}}x_{i}=0,\;1\leq i\leq k,1\leq a\leq n'\right\} $).

By Eq (\ref{eq:equiv m squares to lie derivative}) $m^{CW}:=m'|_{\bb^{CW}}$
is a differential, whose components $\left\{ m_{k,\beta}^{CW}\right\} $
are as prescribed in the statement of the proposition. Clearly $m_{0,0}^{CW}=0$
so $\left\{ m_{k,\beta}^{CW}\right\} $ define a twisted $A_{\infty}$
algebra, which is easily seen to be a deformation of $\left(C^{CW}\left(L\right),D,\wedge\right)$.

To establish (b) we note that if $m_{k,\beta}$ is unital the unit
\textbf{$\mathbf{e}$} must be the constant 0-form 1. Since $\lc_{\xi_{r}}\mathbf{e}=0$
and $\iota_{\xi_{r}}\mathbf{e}=0$ for all $1\leq r\leq n'$, we find
that $\mathbf{e}$ is also a unit for $\left\{ m_{k,\beta}^{CW}\right\} $. 

Finally we prove (c). First, we check that Eq (\ref{eq:twisted cyclic symmetry})
holds for $\left\{ m_{k,\beta}^{CW}\right\} $. For ${\left(k,\beta\right)\neq\left(1,\mathbf{0}\right)}$
this is immediate, and for $\left(k,\beta\right)=\left(1,\mathbf{0}\right)$
it is a special case of Example \ref{exa:DGA}. Then (c) follows from
Lemma \ref{lem:CW pairing is non-degenerate} and Corollary \ref{cor:perfect equivariant pairing}.\end{proof}
\begin{rem}
\label{rem:T star modules}A \emph{$\tb^{*}$ module} is an algebraic
structure which captures the notion of a smooth action of a Lie group
on a differential complex, see \cite{supersymmetry+equivariance}.
This structure suffices to define an equivariant extension of the
complex, in essentially the same way as for the De Rham complex of
a manifold $L$. We would like to point out that Definition \ref{def:invariant twisted A8-1}
and the proof of Proposition \ref{prop:equivariant twisted extension-1}
can be formulated in this more general language. In other words, instead
of starting off from $C=C\left(L\right)$ and working with deformations
of the De Rham DGA, we can assume that $\left(C,d\right)$ admits
a $\tb^{*}$-module structure, define when perturbations\emph{ }of
$d$ are $\tb$-invariant, and for such perturbations obtain equivariant
extensions as above.
\end{rem}

\subsection{\label{sub:Cyclic equivariant retraction}Cyclic unital equivariant
retraction}

The main goal of this section is to prove the following.
\begin{thm}
\label{thm:equivariant cyclic retraction exists}Let $L$ be a non-orientable
closed $\mathbb{T}$-manifold which has even cohomology. Then there
exists a cyclic unital retraction of $\left(C^{CW}\left(L\right),D\right)$
to $H\left(C^{CW}\right)$. \end{thm}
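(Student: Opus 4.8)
The plan is to obtain the retraction by transferring a non-equivariant Hodge--de Rham retraction along the perturbation $D=d-\sum_{j=1}^{n'}\alpha_j\iota_{\xi_j}$ of the ordinary De Rham differential. Recall that $C^{CW}(L)=C(L)^{\tb}\otimes_{\cc}\ca$ with $D=d\otimes 1-\sum_j\iota_{\xi_j}\otimes\alpha_j$, so $D$ is an $\mf$-adically small perturbation of $d\otimes 1$; moreover $\iota_{\xi_j}$ strictly lowers form degree while $\alpha_j$ strictly raises polynomial degree, so every perturbation series appearing below is actually finite when evaluated on a fixed element of $C^{CW}(L)$ and no convergence issue arises.

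First I would fix a $\tb$-invariant Riemannian metric on $L$ (average an arbitrary one using the Haar measure on $\tb$) together with the flat metric on $Or(TL)$, and take the associated Hodge--de Rham retraction $(\Pi^{\tb}_{1,\mathbf 0},I^{\tb}_{1,\mathbf 0},h^{\tb}_{1,\mathbf 0})$ of $(C(L)^{\tb},d)$ onto the space $\mathcal{H}$ of $\tb$-invariant harmonic forms, $\mathcal{H}\cong H(L)$. As recalled above for the orientable case (cf.\ \cite{fukayacyclic}), the non-orientable case being the same, this retraction satisfies the side conditions (\ref{eq:side conditions}), is cyclic for $\langle\cdot\rangle$ because the Hodge decomposition is $\langle\cdot\rangle$-orthogonal, and is unital since the constant function $\mathbf 1$ is harmonic, so $h^{\tb}_{1,\mathbf 0}\mathbf 1=0$. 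I then extend $\Pi^{\tb}_{1,\mathbf 0},I^{\tb}_{1,\mathbf 0},h^{\tb}_{1,\mathbf 0}$ $\ca$-linearly.

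Next I would apply the classical homological perturbation lemma with side conditions (the linear analogue of Theorem \ref{thm:homological perturbation lemma}; cf.\ $(2.1_{*})$ of \cite{Huebschmann+Kadeishvili}) to the perturbation $\delta:=-\sum_j\alpha_j\iota'_{\xi_j}$ of $d'$, so that $D'=d'+\delta$ where $D'x:=(-1)^{\codim x}Dx$. This yields $\Pi^{CW}_{1,\mathbf 0}=\sum_{a\ge 0}\Pi^{\tb}_{1,\mathbf 0}(\delta h^{\tb}_{1,\mathbf 0})^{a}$, $I^{CW}_{1,\mathbf 0}=\sum_{a\ge 0}(h^{\tb}_{1,\mathbf 0}\delta)^{a}I^{\tb}_{1,\mathbf 0}$, $h^{CW}_{1,\mathbf 0}=\sum_{a\ge 0}h^{\tb}_{1,\mathbf 0}(\delta h^{\tb}_{1,\mathbf 0})^{a}$, which satisfy $\Pi^{CW}_{1,\mathbf 0}I^{CW}_{1,\mathbf 0}=\mathrm{id}$, the homotopy identity for $D'$, and the side conditions, together with a transferred differential $d_{\infty}=\sum_{a\ge 0}\Pi^{\tb}_{1,\mathbf 0}\delta(h^{\tb}_{1,\mathbf 0}\delta)^{a}I^{\tb}_{1,\mathbf 0}$ on $\mathcal{H}\otimes\ca$. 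Here the hypothesis that $L$ has even cohomology enters decisively: $\mathcal{H}\cong H(L)$ is concentrated in even codimension degree and $\ca$ in even degree, so any codimension-degree-$1$ operator on $\mathcal{H}\otimes\ca$ vanishes; hence $d_{\infty}=0$, and $(\mathcal{H}\otimes\ca,0)$ is identified with $H^{CW}(L)$ by Proposition \ref{prop:equivariant cohomology of separated manifolds}. Composing $\Pi^{CW}_{1,\mathbf 0},I^{CW}_{1,\mathbf 0}$ with this identification gives retraction data for $(C^{CW}(L),D)$ onto $H^{CW}(L)$ in the sense of Definition \ref{def:retraction}, and it is unital because $\delta\mathbf 1=0$ and $h^{\tb}_{1,\mathbf 0}\mathbf 1=0$ force $h^{CW}_{1,\mathbf 0}\mathbf 1=0$.

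The step I expect to be the main obstacle is cyclicity --- verifying (\ref{eq:cyclic retraction}) for $h^{CW}_{1,\mathbf 0}$ with respect to $\langle\cdot\rangle^{CW}$. The subtlety is that, unlike the skew-adjoint $h^{\tb}_{1,\mathbf 0}$, the contraction $\iota'_{\xi_j}$ is \emph{symmetric} for $\langle\cdot\rangle^{CW}$: since $\iota_{\xi_j}$ is an odd derivation of $\wedge$ and $x\wedge y$ vanishes in degree $n+1$, one computes $\langle\iota'_{\xi_j}x,y\rangle^{CW}=(-1)^{\codim x}\langle x,\iota'_{\xi_j}y\rangle^{CW}$, so $\delta$ is symmetric rather than skew. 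Nonetheless each summand $h^{\tb}_{1,\mathbf 0}(\delta h^{\tb}_{1,\mathbf 0})^{a}$ of $h^{CW}_{1,\mathbf 0}$ is a \emph{palindromic} word in the skew-adjoint $h^{\tb}_{1,\mathbf 0}$ and the symmetric $\delta$, and a Koszul-sign count shows such a word is skew-adjoint for every $a$: the product $(-1)^{a+1}$ of adjoint signs and the reordering sign $(-1)^{a}$ multiply to $-1$ independently of $a$. Hence $h^{CW}_{1,\mathbf 0}$ is skew-adjoint and the retraction is cyclic. Geometrically this is the assertion that $\mathcal{K}:=I^{CW}_{1,\mathbf 0}(\mathcal{H}\otimes\ca)$ is an $\ca$-free, $D$-stable complement to $D\,C^{CW}(L)$ inside $\ker D$ which is $\langle\cdot\rangle^{CW}$-orthogonal to $D\,C^{CW}(L)$, i.e.\ an equivariant Hodge--de Rham decomposition as in (\ref{eq:decompose C into three}); this is the content of what the paper isolates as the existence of a homotopy kernel, and the availability of such an orthogonal splitting over the non-field $\ca$ ultimately rests on the perfectness of $\langle\cdot\rangle^{CW}$ on $H^{CW}(L)$, Corollary \ref{cor:perfect equivariant pairing}, i.e.\ once more on the even cohomology hypothesis.
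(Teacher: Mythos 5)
Your proposal is correct, but it proves the theorem by a genuinely different route than the paper. The paper's proof is geometric: it constructs an equivariant homotopy kernel $\Lambda'$ on the blow-up $\widetilde{L\times L}$ of $L\times L$ along the diagonal (Proposition \ref{prop:homotopy kernel exists}), using the equivariant angular form, the Thom form, the equivariant Poincar\'e dual to the diagonal and the equivariant K\"unneth formula; the homotopy $h_{1,\mathbf{0}}'$ is then the pull--push integral operator with kernel $\Lambda'$, the retraction identities are verified by Stokes' theorem and the projection formula (Lemma \ref{lem:kernel gives almost retraction}), and the failing side conditions are repaired by the Lambe--Stasheff substitution $h_{1,\mathbf{0}}=h_{1,\mathbf{0}}'\circ D\circ h_{1,\mathbf{0}}'$ (Lemma \ref{lem:correcting an almost retraction}). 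You instead start from an invariant Hodge retraction of $\left(C(L)^{\tb},d\right)$ and perturb it by $\delta=-\sum_j\alpha_j\iota_{\xi_j}'$ via the linear perturbation lemma; I checked your two key sign claims --- that $\left\langle \iota_{\xi_j}'x,y\right\rangle=(-1)^{\codim x}\left\langle x,\iota_{\xi_j}'y\right\rangle$ and that the palindromic words $h_{1,\mathbf{0}}^{\tb}\left(\delta h_{1,\mathbf{0}}^{\tb}\right)^{a}$ then satisfy Eq.\ (\ref{eq:cyclic retraction}) for every $a$ --- and both are right, and the side conditions do propagate through the perturbation series. Your route is shorter, avoids the blow-up machinery entirely, and makes the role of the even cohomology hypothesis maximally transparent (it forces the transferred differential $d_\infty$ to vanish for degree reasons, so the target really is $H^{CW}(L)$ with zero differential); what it does not produce is the explicit kernel $\Lambda'$ on $\widetilde{L\times L}$, which the paper needs in its own right for the forthcoming fixed-point localization arguments, not merely as a means to this theorem. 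Two small points you should make explicit: (i) that invariant harmonic forms compute all of $H(L)$ (the Laplacian of an averaged metric commutes with the $\tb$-action and $\tb$ is connected, so harmonic forms are automatically invariant), which is what lets you work inside $C(L)^{\tb}$ from the start; and (ii) that the identification of $\mathcal{H}\otimes\ca$ with $H^{CW}(L)$ used to define $\Pi_{1,\mathbf{0}}$ and $I_{1,\mathbf{0}}$ must be the one induced by the chain map $I^{CW}_{1,\mathbf{0}}$ itself, so that $I_{1,\mathbf{0}}$ sends a class to a representative of that class as Definition \ref{def:retraction} implicitly requires.
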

\begin{rem}
All the results in this section continue to hold for $L$ oriented,
\emph{mutatis mutandis.}
\end{rem}
In proving this theorem we will construct an \emph{equivariant homotopy
kernel, }which represents the homotopy operator of the retraction,
and plays a prominent role in defining equivariant open Gromov-Witten
invariants. This result can be seen as a kind of equivariant extension
of the Hodge-De Rham decomposition.

Let $L$ be a non-orientable $n$-dimensional closed $\tb$-manifold.
For $0\leq i\leq2N-1$ choose $\omega_{i}\in C^{CW}\left(L\right)$
such that (i) $D\omega_{i}=0$, (ii) the cohomology classes $\left\{ [\omega_{i}]\right\} _{i=0}^{2N-1}$
form a basis for $H^{CW}\left(L\right)$, (iii) $\omega_{i}\in\left(C^{CW}\left(L\right)\right)^{\bullet,0}$
for $0\leq i\leq N-1$ and $\omega_{i}\in\left(C^{CW}\left(L\right)\right)^{\bullet,1}$
for $N\leq i\leq2N-1$, and (iv) $\left\langle \omega_{i},\omega_{j+N}\right\rangle =\delta_{i,j}$.
A set of forms $\left\{ \omega_{i}\right\} _{i=0}^{2N-1}$ satisfying
(i) - (iv) will be called an \emph{equivariant basis of forms for
$L$.} An equivariant basis of forms exists; indeed, one can take
$\omega_{i}$ to be any form which represents $\Phi\left(1\otimes\gamma_{i}\right)$,
see the proof of Lemma \ref{lem:equivariant pairing normalized} and
the discussion directly above it for explanation of the notation.

Consider the diagonal $L=\Delta\overset{i_{\Delta}}{\hookrightarrow}L\times L$,
and let $i_{\Delta}\left(L\right)\subset N_{\Delta}\subset L\times L$
be an equivariant tubular neighbourhood. Construct the blow up $\beta_{\Delta}:\widetilde{L\times L}\to L\times L$,
where $\widetilde{L\times L}=\bl_{\Delta}\left(L\times L\right)=\widetilde{N}_{\Delta}\bigcup\left(L\times L\backslash\Delta\right)$,
see $\S$\ref{sub:Blowing-up}. The blow up $\widetilde{L\times L}$
is a compact manifold with boundary $i_{\partial}:S\left(N_{\Delta}\right)\hookrightarrow\widetilde{L\times L}$.
For $j=1,2$ we denote by $pr_{j}:L\times L\to L$ the projection
and set $\widetilde{pr}_{j}:=pr_{j}\circ\beta_{\Delta}$.

Note that 
\[
0\to TL\to i_{\Delta}^{*}\left(pr_{1}^{*}TL\oplus pr_{2}^{*}TL\right)\to N_{\Delta}\to0.
\]
Since we have $pr_{j}\circ i_{\Delta}=\mbox{id}_{\Delta}$ and $i_{\Delta}\pi_{S}=\tilde{\pi}i_{\partial}$
we find that $Or\left(N_{\Delta}\right)\simeq i_{\Delta}^{*}Or\left(pr_{2}^{*}TL\right)$
and 
\begin{equation}
\pi_{S}^{*}Or\left(N_{\Delta}\right)^{\vee}\simeq i_{\partial}^{*}\widetilde{pr}_{2}^{*}Or\left(TL\right)^{\vee}\label{eq:normal to diagonal orientation}
\end{equation}

\begin{defn}
\label{def:homotopy kernel}Let $L$ be a non-orientable closed $\tb$-manifold
with even cohomology. An \emph{equivariant homotopy kernel $\Lambda'$
for }$L$ is a form $\Lambda'\in\Omega\left(\widetilde{L\times L};\widetilde{pr}_{2}^{*}\left(Or\left(TL\right)^{\vee}\right)\right)$
with the following properties.

(a) $i_{S}^{*}\Lambda'=-\phi+i_{S}^{*}\beta_{\Delta}^{*}\Upsilon$
where $\phi\in\Omega\left(S\left(N\right);i_{\partial}^{*}\widetilde{pr}_{2}^{*}Or\left(TL\right)^{\vee}\left[\vec{\alpha}\right]\right)^{\tb}$
is an equivariant angular form for $S\left(N_{\Delta}\right)$ (see
(\ref{eq:normal to diagonal orientation})) and $\Upsilon\in\Omega\left(L\times L;pr_{2}^{*}Or\left(TL\right)^{\vee}\left[\vec{\alpha}\right]\right)^{\tb}$. 

(b) $D\Lambda'=\sum_{j=0}^{N-1}\widetilde{pr}{}_{1}^{*}\omega_{j}\wedge\widetilde{pr}{}_{2}^{*}\omega_{j+N}$
where $\left\{ \omega_{i}\right\} _{i=0}^{2N-1}$ is an equivariant
basis of forms for $L$. \end{defn}
\begin{prop}
\label{prop:homotopy kernel exists}If $L$ is a non-orientable closed
$\tb$-manifold which has even cohomology, an equivariant homotopy
kernel for $L$ exists.\end{prop}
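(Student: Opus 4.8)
The plan is to build $\Lambda'$ as a sum $\Lambda' = \Psi + \beta_{\Delta}^{*}\gamma$ of a ``local'' piece $\Psi$, living near the exceptional boundary, whose boundary restriction supplies the angular form in condition (a) and whose differential is the pullback of the equivariant Poincar\'e dual of the diagonal, and a ``global'' correction $\beta_{\Delta}^{*}\gamma$ with $\gamma\in C^{CW}(L\times L;pr_{2}^{*}Or(TL)^{\vee})$ chosen so that $D\gamma$ replaces that Poincar\'e dual by the K\"unneth representative $\Theta := \sum_{j=0}^{N-1}pr_{1}^{*}\omega_{j}\wedge pr_{2}^{*}\omega_{j+N}$ appearing in condition (b). Concretely, I would first fix an equivariant angular form $\phi$ for the sphere bundle $S(N_{\Delta})$ (it exists by Proposition \ref{prop:equiv ang form exists}), let $\tau_{N_{\Delta}}$ be the corresponding equivariant Thom form satisfying the defining equation (\ref{eq:thom defining equation}), and let $\tau_{\Delta/L\times L}=i_{\Delta!}^{\kappa}1$ be the zero-extension of $\tau_{N_{\Delta}}$ to $L\times L$, with values in $pr_{2}^{*}Or(TL)^{\vee}$ via the identification $Or(N_{\Delta})^{\vee}\simeq i_{\Delta}^{*}pr_{2}^{*}Or(TL)^{\vee}$ (cf. (\ref{eq:normal to diagonal orientation})).

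The heart of the argument is to prove that $\Theta$ and $\tau_{\Delta/L\times L}$ represent the same class in $H^{CW}(L\times L;pr_{2}^{*}Or(TL)^{\vee})$ --- the equivariant form of the classical fact that the diagonal class equals $\sum_{i}e_{i}\boxtimes e_{i}^{\vee}$. Since $L$ has even cohomology, the K\"unneth formula shows $L\times L$ is $\lc$-separated (Definition \ref{def:separated}) for $\lc=pr_{2}^{*}Or(TL)^{\vee}$ and $\lc=pr_{1}^{*}Or(TL)^{\vee}$, so by Proposition \ref{prop:equivariant cohomology of separated manifolds} the relevant equivariant cohomologies are free $\ca$-modules, and combining the equivariant K\"unneth formula (Corollary \ref{cor:equivariant Kunneth}) with Corollary \ref{cor:perfect equivariant pairing} one obtains that the equivariant Poincar\'e pairing $H^{CW}(L\times L;pr_{2}^{*}Or(TL)^{\vee})\otimes H^{CW}(L\times L;pr_{1}^{*}Or(TL)^{\vee})\to\ca$ is perfect. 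Hence it suffices to check that $[\Theta]$ and $[\tau_{\Delta/L\times L}]$ pair identically with an arbitrary equivariant class, and by Corollary \ref{cor:equivariant Kunneth} one may take that class of the form $pr_{1}^{*}a\wedge pr_{2}^{*}b$ with $a,b$ equivariantly closed on $L$. For $\tau_{\Delta/L\times L}$ the pairing is $\int_{\Delta}(pr_{1}^{*}a\wedge pr_{2}^{*}b)|_{\Delta}=\int_{L}a\wedge b$ by equivariant Poincar\'e duality (Corollary \ref{cor:equivariant poincare duality}); for $\Theta$ it unwinds, by Fubini for equivariant integration, to $\sum_{j}\pm(\int_{L}\omega_{j}\wedge a)(\int_{L}\omega_{j+N}\wedge b)=\sum_{j}\pm\langle\omega_{j},a\rangle\langle\omega_{j+N},b\rangle$, which equals $\pm\langle a,b\rangle=\pm\int_{L}a\wedge b$ by the dual-basis normalization (iv) of the equivariant basis of forms (here $Or(TL)^{\vee}$ is identified with $Or(TL)$ via the canonical trivialization of $Or(TL)^{\otimes2}$). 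This yields $\gamma\in C^{CW}(L\times L;pr_{2}^{*}Or(TL)^{\vee})$, automatically $\tb$-invariant since $H^{CW}$ is the cohomology of the invariant de Rham complex, with $D\gamma=\Theta-\tau_{\Delta/L\times L}$.

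It then remains to assemble $\Lambda'$. Since the cutoff $\sigma$ vanishes for $r\geq1$, the form $pr_{S}^{*}\phi\wedge pr_{r}^{*}\sigma$ on $\widetilde{N}_{\Delta}=S(N_{\Delta})\times[0,\infty)$ is supported in $\{r\leq1\}$ and so extends by zero to a $\tb$-invariant form $\Psi$ on $\widetilde{L\times L}$; by (\ref{eq:thom defining equation}) it satisfies $D\Psi=\beta_{\Delta}^{*}\tau_{N_{\Delta}}=\beta_{\Delta}^{*}\tau_{\Delta/L\times L}$ on all of $\widetilde{L\times L}$ (both sides vanishing outside $\widetilde{N}_{\Delta}$). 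Set $\Lambda':=\Psi+\beta_{\Delta}^{*}\gamma\in\Omega(\widetilde{L\times L};\widetilde{pr}_{2}^{*}Or(TL)^{\vee}[\vec{\alpha}])^{\tb}$. Then
\[
D\Lambda'=\beta_{\Delta}^{*}\tau_{\Delta/L\times L}+\beta_{\Delta}^{*}(\Theta-\tau_{\Delta/L\times L})=\beta_{\Delta}^{*}\Theta=\sum_{j=0}^{N-1}\widetilde{pr}_{1}^{*}\omega_{j}\wedge\widetilde{pr}_{2}^{*}\omega_{j+N},
\]
which is condition (b) of Definition \ref{def:homotopy kernel}. For condition (a), restricting to the boundary inclusion $i_{\partial}\colon S(N_{\Delta})\hookrightarrow\widetilde{L\times L}$ and using $pr_{r}\circ i_{\partial}\equiv0$ together with $\sigma(0)=-1$ gives $i_{\partial}^{*}\Psi=-\phi$ (under the identification (\ref{eq:normal to diagonal orientation})), so $i_{\partial}^{*}\Lambda'=-\phi+i_{\partial}^{*}\beta_{\Delta}^{*}\gamma$, i.e. condition (a) holds with $\Upsilon:=\gamma$.

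The main obstacle is the cohomological step: carefully tracking the orientation local systems and the Koszul signs through the equivariant K\"unneth and Poincar\'e duality isomorphisms to confirm that $\Theta$ really is the equivariant Poincar\'e dual of $\Delta$, and checking that the perfect-pairing hypothesis genuinely applies --- this is precisely where the even-cohomology assumption on $L$ enters. Once that identification is in place, the remainder is a routine assembly of constructions already set up in the paper (the blow-up, the equivariant Thom form, and the cutoff $\sigma$).
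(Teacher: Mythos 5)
Your proposal is correct and follows essentially the same route as the paper: fix an equivariant angular form for $S\left(N_{\Delta}\right)$, take the form $pr_{S}^{*}\phi\wedge pr_{r}^{*}\sigma$ on the blow up whose differential is the pullback of the equivariant Poincar\'e dual $\tau_{\Delta}$, identify $\left[\tau_{\Delta}\right]$ with $\sum_{j}pr_{1}^{*}\left[\omega_{j}\right]\wedge pr_{2}^{*}\left[\omega_{j+N}\right]$ via the equivariant K\"unneth formula, equivariant Poincar\'e duality, and the perfectness of the pairing (this is where even cohomology enters), and then add a primitive of the difference. The only cosmetic difference is that the paper expands $\left[\tau_{\Delta}\right]$ in the K\"unneth basis and solves for the coefficients $f_{ij}$ by pairing against basis elements, whereas you verify the equality of the two classes by pairing against arbitrary classes; these are the same computation.
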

\begin{proof}
Denote $\lc=pr_{2}^{*}Or\left(TL\right)^{\vee}$. Use (\ref{eq:normal to diagonal orientation})
to define the isomorphism of local systems (\ref{eq:ls iso needed})
\[
\kappa:Or\left(TL\right)\otimes i_{\Delta}^{*}Or\left(T\left(L\times L\right)\right)^{\vee}\simeq Or\left(N_{\Delta}^{\vee}\right)\simeq i_{\Delta}^{*}\lc
\]
needed in order to construct an equivariant Poincare dual to the diagonal
$\tau_{\Delta}$ with values in $pr_{2}^{*}Or\left(TL\right)^{\vee}$
(see \ref{cor:equivariant poincare duality}). Let $\check{\Lambda}\in\Omega^{n-1}\left(\widetilde{L\times L};\widetilde{pr}{}_{2}^{*}Or\left(TL\right)\left[\vec{\alpha}\right]\right)^{\tb}$
be the unique form supported in $\widetilde{N}_{\Delta}\subset\bl_{\Delta}\left(L\times L\right)$
such that 
\[
\check{\Lambda}|_{\widetilde{N}_{\Delta}}=pr_{S}^{*}\phi\wedge pr_{r}^{*}\sigma,
\]
so that $D\check{\Lambda}=\beta_{\Delta}^{*}\tau_{\Delta}$ (cf. Eq
(\ref{eq:thom defining equation})). 

Let $\left\{ \omega_{i}\right\} _{i=0}^{2N-1}$ be an equivariant
basis for $L\times L$. By Corollary \ref{cor:equivariant Kunneth}
we have some elements $f_{ij}\in\cc\left[\vec{\alpha}\right]$ with
\begin{equation}
\left[\tau\right]=\sum_{0\leq i,j\leq2N-1}f_{ij}\cdot\left(pr_{1}^{*}\left[\omega_{i}\right]\wedge pr_{2}^{*}\left[\omega_{j}\right]\right)\label{eq:thom decomposed by Kunneth}
\end{equation}
We claim 
\[
f_{ij}=\delta_{i,j+N}:=\begin{cases}
1 & i+j=N\mod2N\\
0 & \mbox{otherwise}
\end{cases}.
\]
To see this, we compute $\int_{L\times L}\tau\wedge pr_{1}^{*}\omega_{i'}\wedge pr_{2}^{*}\omega_{j'}$
for $0\leq i',j'\leq2N-1$, in two ways. First we use Corollary \ref{cor:equivariant poincare duality}:
\begin{equation}
\int_{L\times L}\tau\wedge pr_{1}^{*}\omega_{i'}\wedge pr_{2}^{*}\omega_{j'}=\int_{L}\omega_{i'}\wedge\omega_{j'}=\delta_{i',j'+N}.\label{eq:pair thom using PD}
\end{equation}
On the other hand, by Eq (\ref{eq:thom decomposed by Kunneth}) we
have:

\begin{multline}
\int_{L\times L}\tau\wedge pr_{1}^{*}\omega_{i'}\wedge pr_{2}^{*}\omega_{j'}=\sum_{i,j}f_{ij}\int_{L\times L}pr_{1}^{*}\left(\omega_{i}\wedge\omega_{i'}\right)\wedge pr_{2}^{*}\left(\omega_{j}\wedge\omega_{j'}\right)=\\
=\sum_{i,j}f_{ij}\left(\int_{L}\omega_{i}\wedge\omega_{i'}\right)\left(\int_{L}\omega_{j}\wedge\omega_{j'}\right)=\sum_{i,j}f_{ij}\delta_{i,i'+N}\delta_{j,j'+N}=f_{i'+N,j'+N}.\label{eq:pair thom directly}
\end{multline}
Combining Eqs (\ref{eq:pair thom using PD},\ref{eq:pair thom directly})
we conclude $f_{ij}=\delta_{i,j+N}$.

Now define $\Lambda'=\check{\Lambda}+\beta_{\Delta}^{*}\Upsilon$
where $D\Upsilon=\sum_{i=0}^{N-1}\widetilde{pr}{}_{1}^{*}\omega_{i}\wedge\widetilde{pr}{}_{2}^{*}\omega_{i+N}-\tau$
so 
\[
D\Lambda'=\sum_{i=0}^{N-1}\widetilde{pr}{}_{1}^{*}\omega_{i}\wedge\widetilde{pr}{}_{2}^{*}\omega_{i+N}
\]
and we have $i_{S}^{*}\Lambda'=-\phi+i_{S}^{*}\beta_{\Delta}^{*}\Upsilon$
by construction.
\end{proof}

Given an equivariant homotopy kernel $\Lambda'$ for $L$ we define
$\ca$-module maps ${\Pi_{1,0}:C^{CW}\left(L\right)\to H^{CW}\left(L\right)}$,
${I_{1,0}:H^{CW}\left(L\right)\to C^{CW}\left(L\right)}$, and ${h_{1,0}':C^{CW}\left(L\right)[+1]\to C^{CW}\left(L\right)}$,
by setting

\begin{gather}
h'_{1,\mathbf{0}}x=\begin{cases}
\widetilde{pr}_{1!}\left(\widetilde{pr}_{2}^{*}x\;\Lambda'\right) & \ls x=0\\
\widetilde{pr}_{2!}\left(\widetilde{pr}_{1}^{*}x\;\Lambda'\right) & \ls x=1
\end{cases}.\nonumber \\
I_{1,\mathbf{0}}\left[\omega_{i}\right]=\omega_{i},\qquad0\leq i\leq2N-1\label{eq:almost retraction from kernel}\\
h'_{1,\mathbf{0}}x=\begin{cases}
\widetilde{pr}_{1!}\left(\widetilde{pr}_{2}^{*}x\;\Lambda'\right) & \ls x=0\\
\widetilde{pr}_{2!}\left(\widetilde{pr}_{1}^{*}x\;\Lambda'\right) & \ls x=1
\end{cases}.\nonumber 
\end{gather}

\[
\]

Let us explain how the pushforward is defined. We deal with the case
$\ls x=0$, the other case is analogous. It is easy to see that $\widetilde{pr}_{1}$
is an equivariant proper submersion, so by Definition \ref{def:oriented submersion}
we just need to supply an equivariant local system isomorphism

\[
Or\left(T\left(\widetilde{L\times L}\right)\right)\otimes\widetilde{pr}{}_{1}^{*}Or\left(TL\right)^{\vee}\to\kk^{\vee}\otimes\widetilde{pr}_{1}^{*}\lc,
\]
where we have $\kk=\widetilde{pr}_{2}^{*}Or\left(TL\right)^{\vee}$
and require $\lc=\underline{\cc}$ in this case (recall we need $h'_{1,\mathbf{0}}$
to preserve the local system degree). By part (b) of Lemma \ref{lem:blow up is relatively orientable}
we have 
\[
Or\left(T\left(\widetilde{L\times L}\right)\right)\otimes\widetilde{pr}{}_{1}^{*}Or\left(TL\right)^{\vee}\simeq\beta_{\Delta}^{*}\left(Or\left(T\left(L\times L\right)\right)\otimes pr_{1}^{*}Or\left(TL\right)^{\vee}\right),
\]
and we compose this with $\beta_{\Delta}^{*}\kappa'$ where $\kappa'$
is the composition of the obvious local system isomorphisms
\[
Or\left(T\left(L\times L\right)\right)\otimes pr_{1}^{*}Or\left(TL\right)^{\vee}\simeq pr_{2}^{*}Or\left(TL\right)^{\vee}\to\kk'\otimes pr_{1}^{*}\lc
\]
for $\kk'=pr_{2}^{*}Or\left(TL\right)^{\vee}$.

\begin{lem}
\label{lem:kernel gives almost retraction}$\left(\Pi_{1,0},I_{1,0},h'_{1,0}\right)$
satisfies all the conditions of Definition \ref{def:retraction} for
being a retraction of $\left(C^{CW}\left(L\right),D\right)$ to $H^{CW}\left(L\right)$,
\emph{except} possibly for the side conditions (\ref{eq:side conditions}).
Moreover, it satisfies Eq (\ref{eq:cyclic retraction}) and $h_{1,\mathbf{0}}'1^{0,0}=0$
where $1^{0,0}\in\left(C^{CW}\left(L\right)\right)^{0,0}$ denotes
the constantly 1 zero-form. \end{lem}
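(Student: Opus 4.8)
The plan is to verify the conditions of Definition~\ref{def:retraction} one at a time, the homotopy identity being the substantial one, and then to check the two extra properties. Throughout, $\Pi_{1,\mathbf{0}}\colon C^{CW}(L)\to H^{CW}(L)$ is the projection onto cohomology read off from the equivariant basis of forms: up to a fixed overall sign, $\Pi_{1,\mathbf{0}}x=\sum_{j=0}^{N-1}[\omega_j]\int_L x\wedge\omega_{j+N}$ for $\ls x=0$ and $\Pi_{1,\mathbf{0}}x=\sum_{j=0}^{N-1}[\omega_{j+N}]\int_L x\wedge\omega_{j}$ for $\ls x=1$. First I would observe that $\Pi_{1,\mathbf{0}}$, $I_{1,\mathbf{0}}$ and $h'_{1,\mathbf{0}}$ all take values in $\tb$-invariant forms (resp.\ cohomology); for $h'_{1,\mathbf{0}}$ this is because equivariant pullback, wedge product and equivariant pushforward all preserve $\tb$-invariance. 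The easy checks are then $DI_{1,\mathbf{0}}=0$, immediate from $D\omega_i=0$; $\Pi_{1,\mathbf{0}}D'=0$, since $\int_L D(\,\cdot\,)=0$ on the closed manifold $L$ and $D\omega_i=0$; and $\Pi_{1,\mathbf{0}}I_{1,\mathbf{0}}=\mathrm{id}_{H^{CW}(L)}$, which is exactly property (iv) of an equivariant basis of forms, once one knows $\{[\omega_i]\}$ is a basis.

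The core is the homotopy identity $D'h'_{1,\mathbf{0}}+h'_{1,\mathbf{0}}D'=I_{1,\mathbf{0}}\Pi_{1,\mathbf{0}}-\mathrm{id}_{C^{CW}(L)}$, which I would check on an $x$ with $\ls x=0$, the case $\ls x=1$ being symmetric with the roles of $\widetilde{pr}_1$ and $\widetilde{pr}_2$ exchanged. Apply the Stokes formula for equivariant pushforward along the proper submersion-with-boundary $\widetilde{pr}_1\colon\widetilde{L\times L}\to L$, whose boundary inclusion is $i_\partial\colon S(N_\Delta)\hookrightarrow\widetilde{L\times L}$ and which restricts on the boundary to $\pi_S$ (under $\Delta\cong L$): this writes $D\,h'_{1,\mathbf{0}}x=D\widetilde{pr}_{1!}(\widetilde{pr}_2^*x\wedge\Lambda')$ as $\pm\widetilde{pr}_{1!}\bigl(D(\widetilde{pr}_2^*x\wedge\Lambda')\bigr)$ plus a boundary term $\pm\pi_{S!}\bigl(i_\partial^*(\widetilde{pr}_2^*x\wedge\Lambda')\bigr)$. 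In the first term, expand by Leibniz; the summand carrying $\widetilde{pr}_2^*Dx$ recombines with $h'_{1,\mathbf{0}}D'x$, while the summand carrying $D\Lambda'$ is, by condition (b) of Definition~\ref{def:homotopy kernel} and the projection formula, equal up to sign to $\sum_j\omega_j\wedge\widetilde{pr}_{1!}\widetilde{pr}_2^*(x\wedge\omega_{j+N})=\sum_j\omega_j\cdot\int_L x\wedge\omega_{j+N}=\pm I_{1,\mathbf{0}}\Pi_{1,\mathbf{0}}x$; here $\widetilde{pr}_{1!}\widetilde{pr}_2^*\zeta$ is $\int_L\zeta$ (interpreted as $0$ off top degree) because $\widetilde{pr}_2$ restricts on each fibre of $\widetilde{pr}_1$ to a blow-down of $L$, cf.\ Lemma~\ref{lem:blow up is relatively orientable}(c). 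For the boundary term, $i_\partial^*\Lambda'=-\phi+i_\partial^*\beta_\Delta^*\Upsilon$ by condition (a); the $\Upsilon$-summand is pulled back along $\pi_S$, whose fibres are spheres of dimension $n-1\ge 1$ (recall $L$ is non-orientable, so $n\ge 2$), hence pushes forward to $0$ since $\pi_{S!}1=0$; and the $-\phi$-summand contributes $\pm x\cdot\pi_{S!}^{\kappa}\phi=\pm x$ by the defining property of an equivariant angular form. Collecting the four contributions and chasing signs gives the identity. Pinning down all the signs — in particular the exact Stokes-for-pushforward sign in terms of the orientation conventions of the appendix — is the main technical obstacle, but it is entirely routine.

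It remains to verify Eq~(\ref{eq:cyclic retraction}) and $h'_{1,\mathbf{0}}1^{0,0}=0$. Since $\langle\cdot\rangle^{CW}$ vanishes unless its arguments have complementary local-system degrees, the only nontrivial instance of (\ref{eq:cyclic retraction}) has $\ls x=0$ and $\ls y=1$. Using the push--pull identity $\int_L\widetilde{pr}_{i!}(\zeta)\wedge\eta=\pm\int_{\widetilde{L\times L}}\zeta\wedge\widetilde{pr}_i^*\eta$ (an integral over the blow-up, cf.\ Lemma~\ref{lem:blow up is relatively orientable}(c)), together with (\ref{eq:pairing from functional}), one rewrites both $\langle h'_{1,\mathbf{0}}x,y\rangle^{CW}$ and $\langle x,h'_{1,\mathbf{0}}y\rangle^{CW}$ as the single integral $\pm\int_{\widetilde{L\times L}}\widetilde{pr}_2^*x\wedge\Lambda'\wedge\widetilde{pr}_1^*y$, the only difference being the order of the three wedge factors — this works precisely because the same kernel $\Lambda'$ is used in both slots of the definition of $h'_{1,\mathbf{0}}$. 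The two orderings differ by an explicit sign in $\codim x$, $\codim y$ and $n$, and combining this with the antisymmetry of $\langle\cdot\rangle^{CW}$ and the factor $(-1)^{\codim x}$ in (\ref{eq:cyclic retraction}) yields the desired cancellation. Finally, $h'_{1,\mathbf{0}}1^{0,0}=0$ is immediate on degree grounds: $h'_{1,\mathbf{0}}$ lowers the codimension degree by $1$, the constant $1^{0,0}$ has codimension degree $0$, and $C^{CW}(L)$ is concentrated in non-negative codimension degrees. The side conditions (\ref{eq:side conditions}) are deliberately excluded from the statement and are not treated here.
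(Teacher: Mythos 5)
Your proposal is correct and follows essentially the same route as the paper: the three easy identities, the homotopy identity via Stokes' theorem for the pushforward along $\widetilde{pr}_1$ combined with the two defining properties of the homotopy kernel (the $D\Lambda'$ term yielding $I_{1,\mathbf{0}}\Pi_{1,\mathbf{0}}$ by the projection formula, the boundary term with $-\phi$ yielding $-\mathrm{id}$ and the $\Upsilon$ piece dying under $\pi_{S!}$), the cyclic identity by rewriting both pairings as one integral over $\widetilde{L\times L}$ via the projection formula, and the unit condition by a degree count equivalent to the paper's ($\deg\Lambda'=n-1$ versus fiber dimension $n$). No gaps.
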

\begin{proof}
$\Pi_{1,\mathbf{0}}D=0$ by Stokes' theorem. $DI_{1,\mathbf{0}}=0$
because $\omega_{i}$ are closed. $\Pi_{1,\mathbf{0}}I_{1,\mathbf{0}}=\mbox{id}_{H^{CW}\left(L\right)}$
since this is so for the basis elements by the assumption on the pairing. 

Let us check that $D'h_{1,\mathbf{0}}+h_{1,\mathbf{0}}D'=I_{1,\mathbf{0}}\Pi_{1,\mathbf{0}}-\mbox{id}_{C^{CW}\left(L\right)}$.
We assume $\ls x=0$, the case $\ls x=1$ is analogous.

\begin{multline}
\left(D'h_{1,\mathbf{0}}+h_{1,\mathbf{0}}D'\right)x=\left(-1\right)^{\codim x}\left[-D\widetilde{pr}_{1!}\left(\widetilde{pr}_{2}^{*}x\Lambda'\right)+\widetilde{pr}_{1!}\left(\widetilde{pr}_{2}^{*}Dx\Lambda'\right)\right]=\\
=\left(-1\right)^{\codim x}\left[-D\widetilde{pr}_{1!}\left(\widetilde{pr}_{2}^{*}x\Lambda'\right)+\widetilde{pr}_{1!}\left(D\left(\widetilde{pr}_{2}^{*}x\wedge\Lambda'\right)\right)+\left(-1\right)^{\codim x}\widetilde{pr}_{1!}\left(\widetilde{pr}_{2}^{*}x\wedge D\left(\Lambda'\right)\right)\right]\\
=\left(-1\right)^{\codim x}\left[-D\widetilde{pr}_{1!}\left(\widetilde{pr}_{2}^{*}x\;\Lambda'\right)+\widetilde{pr}_{1!}\left(D\left(\widetilde{pr}_{2}^{*}x\;\Lambda'\right)\right)\right]+\\
+\widetilde{pr}_{1!}\left(\widetilde{pr}_{2}^{*}x\wedge\sum_{i=0}^{N-1}\widetilde{pr}{}_{1}^{*}\omega_{i}\wedge\widetilde{pr}{}_{2}^{*}\omega_{i+N}\right).\label{eq:commutator w D all}
\end{multline}

We show

\begin{equation}
\left(-1\right)^{\codim x}\left[-D\widetilde{pr}_{1!}\left(\widetilde{pr}_{2}^{*}x\;\Lambda'\right)+\widetilde{pr}_{1!}\left(D\left(\widetilde{pr}_{2}^{*}x\;\Lambda'\right)\right)\right]=-\mbox{id}x.\label{eq:commutator w D left}
\end{equation}
Indeed,

\begin{multline*}
\left(-1\right)^{\codim x}\left[-D\widetilde{pr}_{1!}\left(\widetilde{pr}_{2}^{*}x\;\Lambda'\right)+\widetilde{pr}_{1!}\left(D\left(\widetilde{pr}_{2}^{*}x\;\Lambda'\right)\right)\right]=\\
=\left(-1\right)^{\codim x}\left(-1\right)^{\codim x+\left(n-1\right)+n}\left(-1\right)\pi_{S!}\left(i_{S}^{*}\widetilde{pr}_{2}^{*}x\;\left(-\phi\right)+\pi_{S}^{*}i_{\Delta}^{*}\epsilon\right)
\end{multline*}
The factor $\left(-1\right)^{\codim x+\left(n-1\right)+n}$ comes
from Stokes' theorem \ref{lem:Stokes'-Theorem.}. The $\left(-1\right)$
immediately following it appears because we change from the orientation
of $S\left(N_{\Delta}\right)$ as a boundary of $\widetilde{L\times L}$
to the orientation as the boundary of the unit $D\left(N_{\Delta}\right)$.
We have $i_{S}^{*}\widetilde{pr}_{2}^{*}x=\pi_{S}^{*}x$ and $\pi_{S!}\phi=1$,
so by the projection formula \ref{lem:(Projection-Formula.)}

\[
\left(-1\right)^{\codim x}\left(-1\right)^{\codim x+\left(n-1\right)+n}\left(-1\right)\pi_{S!}\left(i_{S}^{*}\widetilde{pr}_{2}^{*}x\;\left(-\phi\right)+\pi_{S}^{*}i_{\Delta}^{*}\epsilon\right)=\left(-\mbox{id}\right)x,
\]
as claimed. 

Next we show
\begin{equation}
\widetilde{pr}_{1!}\left(\widetilde{pr}_{2}^{*}x\wedge\sum_{i=0}^{N-1}\widetilde{pr}{}_{1}^{*}\omega_{i}\wedge\widetilde{pr}{}_{2}^{*}\omega_{i+N}\right)=I_{1,\mathbf{0}}\Pi_{1,\mathbf{0}}x.\label{eq:commutator w D right}
\end{equation}
 First, we claim $\widetilde{pr}_{1!}^{\kappa}\beta^{*}=pr_{1!}^{\kappa'}$.
To see this, remove an arbitrarily small neighbourhood of $S\left(N_{\Delta}\right)\times\left\{ 0\right\} \subset\widetilde{L\times L}$,
so that $\beta$ can be taken to be the identity, and then $\kappa=\kappa'$
(see the second statement in Lemma \ref{lem:blow up is relatively orientable}
(b)). With this established, we compute:

\begin{multline*}
\widetilde{pr}_{1!}\left(\widetilde{pr}_{2}^{*}x\wedge\sum_{i=0}^{N-1}\widetilde{pr}{}_{1}^{*}\omega_{i}\wedge\widetilde{pr}{}_{2}^{*}\omega_{i+N}\right)=pr_{1!}\left(pr_{2}^{*}x\wedge\sum_{i=0}^{N-1}pr{}_{1}^{*}\omega_{i}\wedge pr{}_{2}^{*}\omega_{i+N}\right)=\\
=\sum_{i=0}^{N-1}pr_{1!}\left(pr_{2}^{*}\left(x\wedge\omega_{i+N}\right)\wedge pr{}_{1}^{*}\omega_{i}\right)=\sum_{i=0}^{N-1}\int_{L}\left(x\wedge\omega_{i+N}\right)\omega_{i}=I_{1,\mathbf{0}}\Pi_{1,\mathbf{0}}x.
\end{multline*}
plugging in Eqs (\ref{eq:commutator w D left},\ref{eq:commutator w D right})
on the last line of Eq (\ref{eq:commutator w D all}) shows that $Dh_{1,\mathbf{0}}+h_{1,\mathbf{0}}D=I_{1,\mathbf{0}}\Pi_{1,\mathbf{0}}-\mbox{id}_{C^{CW}\left(L\right)}$.

Next we check that Eq (\ref{eq:cyclic retraction}) holds. We assume
without loss of generality that $\ls x=0$ and $\ls y=1$; the other
cases either vanish since the integrand takes values in the trivial
local system, or can be derived from this case by the antisymmetry
of the pairing (\ref{eq:bracket antisymmetry}). We compute, using
(\ref{eq:pairing from functional}), Lemma \ref{lem:(Projection-Formula.)}
and the fact $\deg\widetilde{pr}_{2!}$ is even and $\deg\Lambda'$
is odd:

\begin{multline*}
\left(-1\right)^{\left(\codim x-1\right)\codim y+\left(\codim x-1\right)}\left\langle h_{1,0}x,y\right\rangle ^{CW}=\int_{L}\widetilde{pr}_{1!}\left(\widetilde{pr}_{2}^{*}x\;\Lambda'\right)\wedge y=\\
=\int_{\widetilde{L\times L}}\widetilde{pr}_{2}^{*}x\;\Lambda'\;\widetilde{pr}_{1}^{*}y=\int_{L}x\;\widetilde{pr}{}_{2!}\left(\Lambda'\;\widetilde{pr}_{1}^{*}y\right)=\\
=\left(-1\right)^{\codim y}\int_{L}x\;\widetilde{pr}{}_{2!}\left(\widetilde{pr}_{1}^{*}y\;\Lambda'\right)=\left(-1\right)^{\codim y}\left(-1\right)^{\codim x\left(\codim y-1\right)+\codim x}\left\langle x,h_{1,\mathbf{0}}y\right\rangle ^{CW}
\end{multline*}
which is equivalent to $\left\langle h_{1,0}x,y\right\rangle +\left(-1\right)^{\codim x}\left\langle x,h_{1,0}y\right\rangle =0$,
as we wanted to show.

Finally, $h_{1,\mathbf{0}}1^{0,0}=0$ since the degree of $\Lambda'$
is $n-1$ and the dimension of the fiber of $\widetilde{pr}_{1}$
is $n$.
\end{proof}
The following is a simple variation of a result found in \cite{side-cond-trick-encyc},
where it is attributed to Lambe and Stasheff \cite{side-cond-trick}.
\begin{lem}
\label{lem:correcting an almost retraction}Let $\left(C,d\right)$
be a dg $R$-module. Let $\Pi_{1,\mathbf{0}}:C\to HC$, $I_{1,\mathbf{0}}:HC\to C$,
and $h_{1,0}':C\to C\left[-1\right]$ be $R$-module maps such that
the 3-tuple $\left(\Pi_{1,0},I_{1,0},h'_{1,0}\right)$ satisfies all
the conditions of Definition \ref{def:retraction}, except perhaps
for the side conditions (\ref{eq:side conditions}).

Set 
\[
h_{1,0}=h_{1,0}'\circ d\circ h_{1,0}'.
\]
Then $\left(\Pi_{1,0},I_{1,0},h{}_{1,0}\right)$ is a retraction of
$\left(C,d\right)$ onto $H\left(C,d\right)$. If Eq (\ref{eq:cyclic retraction})
holds for $h'_{1,\mathbf{0}}$ then the retraction $\left(\Pi_{1,0},I_{1,0},h{}_{1,0}\right)$
is cyclic. If $h_{1,0}'\mathbf{e}=0$ then $\mathbf{e}$ is a unit
for $\left(\Pi_{1,0},I_{1,0},h_{1,0}\right)$. \end{lem}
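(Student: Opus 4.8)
The plan is to treat this as the \emph{side conditions trick} of Lambe--Stasheff: replacing $h'_{1,0}$ by $h_{1,0}=h'_{1,0}\circ d\circ h'_{1,0}$ repairs the side conditions (\ref{eq:side conditions}) while preserving everything else. First note that the conditions of Definition \ref{def:retraction} not involving the homotopy --- namely $\Pi_{1,0}I_{1,0}=\mathrm{id}_{HC}$, $\Pi_{1,0}d'=0$ and $d'I_{1,0}=0$ --- depend only on $\Pi_{1,0}$ and $I_{1,0}$, which are unchanged, so they carry over for free. What remains is (i) the homotopy identity $d'h_{1,0}+h_{1,0}d'=I_{1,0}\Pi_{1,0}-\mathrm{id}_C$, (ii) the three side conditions, and then (iii) cyclicity and (iv) the unit property.

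The engine of the proof is three auxiliary identities, each a short consequence of $\Pi_{1,0}I_{1,0}=\mathrm{id}$, $\Pi_{1,0}d'=0$, $d'I_{1,0}=0$, $(d')^{2}=0$ and the homotopy identity for $h'_{1,0}$:
\[
\Pi_{1,0}\,h'_{1,0}\,d'=0,\qquad d'\,h'_{1,0}\,I_{1,0}=0,\qquad d'\,(h'_{1,0})^{2}\,d'=0 .
\]
For the first, rewrite $h'_{1,0}d'=I_{1,0}\Pi_{1,0}-\mathrm{id}_C-d'h'_{1,0}$ and apply $\Pi_{1,0}$, using $\Pi_{1,0}I_{1,0}=\mathrm{id}$ and $\Pi_{1,0}d'=0$; the second is the mirror computation; the third follows by substituting the homotopy identity for $h'_{1,0}$ twice and cancelling with $(d')^{2}=0$, after which it equals $I_{1,0}\bigl(\Pi_{1,0}h'_{1,0}d'\bigr)=0$ by the first identity. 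Granting these, the side conditions are immediate: $\Pi_{1,0}h_{1,0}=(\Pi_{1,0}h'_{1,0}d')\,h'_{1,0}=0$; $h_{1,0}I_{1,0}=h'_{1,0}(d'h'_{1,0}I_{1,0})=0$; and $h_{1,0}^{2}=h'_{1,0}\bigl(d'(h'_{1,0})^{2}d'\bigr)h'_{1,0}=0$. For the homotopy identity one expands $d'h_{1,0}+h_{1,0}d'$, rewrites every occurrence of $d'h'_{1,0}$ and $h'_{1,0}d'$ through the homotopy identity for $h'_{1,0}$, and simplifies using $(d')^{2}=0$, $\Pi_{1,0}d'=0$ and $d'I_{1,0}=0$, landing on $I_{1,0}\Pi_{1,0}-\mathrm{id}_C$ (with the sign conventions relating $d$ and $d'=(-1)^{\codim}d$ tracked). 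This is exactly the coalgebra perturbation computation, so for the bookkeeping I would appeal to \cite{side-cond-trick-encyc,side-cond-trick}.

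For the cyclic refinement, Eq (\ref{eq:cyclic retraction}) says precisely that $h'_{1,0}$ is skew-adjoint for $\langle\cdot\rangle$ (with the sign $(-1)^{\codim x}$), and in the cyclic settings where this lemma is invoked $d'$ is likewise skew-adjoint --- this is Stokes' theorem applied to the graded Leibniz rule, equivalently the statement that the pairing is compatible with $d$. Since $h_{1,0}=h'_{1,0}\circ d\circ h'_{1,0}$ is a composition of three such operators, a sign computation using the antisymmetry (\ref{eq:bracket antisymmetry}) of $\langle\cdot\rangle$ shows that $h_{1,0}$ is again skew-adjoint, i.e. satisfies (\ref{eq:cyclic retraction}). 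For the unit refinement, if $h'_{1,0}\mathbf{e}=0$ then $h_{1,0}\mathbf{e}=h'_{1,0}\,d\,(h'_{1,0}\mathbf{e})=h'_{1,0}\,d\,0=0$, so $\mathbf{e}$ is a unit for $(\Pi_{1,0},I_{1,0},h_{1,0})$ as well.

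The only genuine obstacle is the sign bookkeeping --- coordinating $d$, $d'=(-1)^{\codim}d$, the degree-$(-1)$ operator $h'_{1,0}$, and the antisymmetry of the pairing --- in the verification of the homotopy identity and of skew-adjointness of the triple composition $h'_{1,0}\circ d\circ h'_{1,0}$. Conceptually there is nothing beyond the standard side-conditions argument, which is why the statement is presented as a simple variation of the cited result.
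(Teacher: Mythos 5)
Your overall strategy is the intended one (the paper's own proof is the single word ``straightforward'', with a pointer to Lambe--Stasheff), and the three auxiliary identities you isolate, $\Pi_{1,\mathbf{0}}h'_{1,\mathbf{0}}d'=0$, $d'h'_{1,\mathbf{0}}I_{1,\mathbf{0}}=0$ and $d'(h'_{1,\mathbf{0}})^{2}d'=0$, are correct and do yield the three side conditions, the unit property, and (granting that $d'$ is compatible with the pairing, which you rightly observe must be imported from the ambient cyclic structure --- it is not among the hypotheses of Definition \ref{def:cyclic unital retraction}) the skew-adjointness of the new homotopy operator.

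The gap is in the one step you do not carry out: the homotopy identity. Writing $\Pi=\Pi_{1,\mathbf{0}}$, $I=I_{1,\mathbf{0}}$, $h'=h'_{1,\mathbf{0}}$, the same substitution you use for the side conditions gives
\begin{gather*}
d'h'd'h'=d'\left(I\Pi-\mbox{id}-d'h'\right)h'=-d'h',\qquad h'd'h'd'=h'\left(I\Pi-\mbox{id}-h'd'\right)d'=-h'd',
\end{gather*}
so for $h=h'\circ d'\circ h'$ one lands on $d'h+hd'=-\left(d'h'+h'd'\right)=\mbox{id}-I\Pi$, the \emph{negative} of what Definition \ref{def:retraction} requires. (The trick $h\mapsto hdh$ preserves the homotopy identity only under the convention $dh+hd=\mbox{id}-\iota\pi$; the paper uses the opposite convention $I\Pi-\mbox{id}$.) The operator $h'\circ d\circ h'$ as literally written is no better: since $d u=(-1)^{\codim u}d'u$ and $h'$ lowers codimension by one, $h'dh'x=(-1)^{\codim x-1}h'd'h'x$, so the identity fails on inputs of odd codimension; the toy example $C=\rr\oplus\rr\left[-1\right]$ with $d$ an isomorphism and $I=\Pi=0$ gives $d'h+hd'=+\mbox{id}$ instead of $-\mbox{id}$. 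The correct correction is $h_{1,\mathbf{0}}=-h'_{1,\mathbf{0}}\circ d'\circ h'_{1,\mathbf{0}}$, equivalently $h_{1,\mathbf{0}}x=(-1)^{\codim x}h'_{1,\mathbf{0}}\,d\,h'_{1,\mathbf{0}}x$. All of your vanishing statements (side conditions, unitality, cyclicity) are insensitive to this sign change, but the homotopy identity is exactly the place where the sign conventions you defer ``tracking'' actually bite, so asserting that the computation ``lands on $I_{1,\mathbf{0}}\Pi_{1,\mathbf{0}}-\mbox{id}_{C}$'' without performing it leaves the proof incomplete (and, as stated, incorrect).
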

\begin{proof}
straightforward. 
\end{proof}

\begin{proof}
[Proof of Theorem \ref{thm:equivariant cyclic retraction exists}]By
Proposition \ref{prop:homotopy kernel exists} an equivariant homotopy
kernel $\Lambda'$ exists. We define $\left(\Pi_{1,\mathbf{0}},I_{1,\mathbf{0}},h_{1,\mathbf{0}}'\right)$
by (\ref{eq:almost retraction from kernel}), and then apply Lemma
\ref{lem:kernel gives almost retraction} and Lemma \ref{lem:correcting an almost retraction}
to produce the desired cyclic unital retraction $\left(\Pi_{1,\mathbf{0}},I_{1,\mathbf{0}},h_{1,\mathbf{0}}\right)$
of $C^{CW}\left(L\right)$ to $H^{CW}\left(L\right)$.\end{proof}
\begin{rem}
It is not hard to show that $h_{1,\mathbf{0}}$ is also represented
by a kernel. That is, there exists some $\Lambda\in\Omega\left(\widetilde{L\times L};\widetilde{pr}_{2}^{*}\left(Or\left(TL\right)^{\vee}\right)\bva\right)$
such that 
\[
h{}_{1,\mathbf{0}}x=\begin{cases}
\widetilde{pr}_{1!}\left(\widetilde{pr}_{2}^{*}x\;\Lambda\right) & \ls x=0\\
\widetilde{pr}_{2!}\left(\widetilde{pr}_{1}^{*}x\;\Lambda\right) & \ls x=1
\end{cases}.
\]

\end{rem}

\section{\label{sec:Appendix.-Orientation-convention}Appendix. Orientation
conventions}

In this appendix we state some conventions regarding orientations
and the definition of the pushforward of forms; we then specify the
signs appearing in formulas involving the pushforward operation. We
work in the non-equivariant context, but the conventions and results
hold, \emph{mutatis mutandis}, for the equivariant setting.

\subsection{Terminology and pullback orientation.}

It will be convenient to speak in terms of relative orientations\emph{.
}A bundle $E\to B$ is called \emph{relatively oriented, }or \emph{r-oriented
}for short, if there is some specific isomorphism of local systems
between $Or\left(E\right)$ and a local system on $B$ which is treated
as known. The collection of known local systems is usually specified
by \emph{presupposing }some bundles to be r-oriented\emph{.} When
we say a manifold with boundary $X$ is r-oriented we mean $TX$ is
r-oriented.

As an example we state the following convention.

\medskip{}

\textbf{Convention. }If $E\to B$ is r-oriented and $f:B'\to B$ is
any smooth map, then the pullback bundle $f^{*}E\to B'$ is r-oriented
using the canonical isomorphism of local systems $Or\left(f^{*}E\right)\simeq f^{*}Or\left(E\right)$.

\subsection{\label{sub:2 of 3 orientation}Short exact sequences and ordered
direct sums}

In the smooth category, a short exact sequence of bundles over a base
space $B$ 
\[
0\to E_{1}\to E\to E_{2}\to0
\]
can always be split
\begin{equation}
E\simeq E_{1}\oplus E_{2}.\label{eq:ordered direct sum}
\end{equation}
For the purposes of orientation, it is important to remember the order
of the summands; to emphasize this we will sometimes call (\ref{eq:ordered direct sum})
an \emph{ordered }direct sum decomposition. A short exact sequence
always fixes the order of the summands as above.

If the rank of $E_{i}$ is $n_{i}$, and the rank of $E$ is $n=n_{1}+n_{2}$,
the wedge product ${\wedge:\Omega_{cv}^{n_{1}}\left(E_{1}\right)\otimes\Omega_{cv}^{n_{2}}\left(E_{2}\right)\to\Omega_{cv}^{n}\left(E_{1}\oplus E_{2}\right)}$
induces an isomorphism ${H_{cv}^{n_{1}}\left(E_{1}|_{U}\right)\otimes H_{cv}^{n_{2}}\left(E_{2}|_{U}\right)\simeq H_{cv}^{n}\left(E|_{U}\right)}$
for sufficiently small $U\subset B$, and thus an isomorphism 
\begin{equation}
Or\left(E\right)\simeq Or\left(E_{1}\right)\otimes Or\left(E_{2}\right).\label{eq:direct sum ls iso}
\end{equation}

\textbf{Convention.} If any two of the three vector bundles in a short
exact sequence, or an ordered direct sum decomposition, are r-oriented\emph{
}then the third is r-oriented by Eq (\ref{eq:direct sum ls iso}).

\subsection{\label{sub:boundary orientation}Boundary.}

If $X$ is a manifold with boundary we have an ordered direct sum
decomposition
\begin{equation}
TX|_{\partial X}=\underline{\rr}\oplus T\partial X\label{eq:boundary decomposition}
\end{equation}
which is \emph{the reverse} of the ordered direct sum decomposition
for the short exact sequence 
\[
0\to T\partial X\to TX|_{\partial X}\overset{-dc}{\longrightarrow}\underline{\rr}\to0
\]
where $c:X\to[0,1)$ is any collar coordinate. Henceforth we use $\underline{\rr}$
to denote the trivial real line bundle, if the base (in this case,
$\partial X$) is clear from the context. $\underline{\rr}$ is canonically
oriented, hence r-oriented, and we have the following.

\medskip{}

\textbf{Convention. }An r-orientation on $X$ induces an r-orientation
on $\partial X$ by Eq (\ref{eq:boundary decomposition}).

\medskip{}

In other words, we orient $\partial X$ so that an oriented basis
for $TX$ is obtained from an oriented basis for $T\partial X$ by
appending an \emph{outward} normal vector \emph{at the beginning}.

\subsection{\label{sub:Pushforward convention}Pushforward.}
\begin{defn}
\label{def:oriented submersion}A pair $\left(f,\kappa\right)$ will
be called \emph{an oriented proper submersion} if 
\begin{enumerate}
\item $f:X\to Y$ is a surjective, proper map between manifolds without
boundary, such that $df|_{x}$ is surjective for all $x\in X$.
\item $\kappa:Or\left(TX\right)\otimes f^{*}Or\left(TY\right)^{\vee}\simeq\kk^{\vee}\otimes f^{*}\lc$
is an isomorphism of local systems for $\kk$ (resp. $\lc$) some
local system on $X$ (resp. $Y$).
\end{enumerate}
\end{defn}

\begin{defn}
We say $\left(f,\kappa\right)$ is an \emph{oriented proper submersion
with boundary }if the pair $\left(f,\kappa\right)$ is as in the previous
definition, except $X$ now has (possibly empty) boundary (and $Y$
has no boundary) and we require in addition that $f|_{\partial X}$
be surjective and $d\left(f|_{\partial X}\right)|_{x}$ be surjective
for every $x\in\partial X$.
\end{defn}

\medskip{}
\textbf{Convention. }Let $\left(f,\kappa\right)$ be an oriented proper
submersion with boundary. Our pushforward is defined so that 
\[
f_{!}^{\kappa}\left(f^{*}\alpha\wedge\beta\right)=\alpha\wedge f_{!}^{\kappa}\beta
\]
for all $\alpha\in\Omega\left(Y;\underline{\cc}\right)$ and $\beta\in\Omega\left(X;\kk\right)$.

\medskip{}
Roughly speaking, this means we write the differentials corresponding
to coordinates of the fiber \emph{after }the differentials pulled
back from the base before integrating them out. This is in agreement
with the definition on page 61 of \cite{bott+tu}.

With this convention we have the following properties. We omit the
proofs, since they are obtained by nothing more than careful book-keeping
of conventions from well-known proofs.
\begin{lem}
\label{lem:Stokes'-Theorem.}(Stokes' Theorem.) Let $\left(f,\kappa\right)$
be an oriented proper submersion with boundary. Use ${\kappa:Or\left(TX\right)\otimes f^{*}Or\left(TY\right)^{\vee}\simeq\kk^{\vee}\otimes f^{*}\lc}$
and $\S$\ref{sub:boundary orientation} to define an isomorphism
${\partial\kappa:Or\left(T\partial X\right)\otimes f|_{\partial X}^{*}Or\left(TY\right)^{\vee}\simeq\left(\kk|_{\partial X}\right)^{\vee}\otimes f|_{\partial X}^{*}\lc}$.
We then have
\end{lem}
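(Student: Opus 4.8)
The plan is to reduce the asserted identity to the classical fiber-integration Stokes' theorem of Bott and Tu (\cite{bott+tu}, pg.\ 62), and then to track the signs coming from the orientation conventions of \S\ref{sub:2 of 3 orientation} and \S\ref{sub:boundary orientation}. Both sides of the identity are $\cc$-linear in $\beta$, and since integration over the fiber commutes with restriction to opens of $Y$, both sides are local on $Y$; so first I would pick a partition of unity on $Y$ and reduce to the case in which $\lc$ and $Or\left(TY\right)$ are trivialized over a coordinate ball $V\subseteq Y$. Because $\left(f,\kappa\right)$ is a proper submersion with boundary and, crucially, $f|_{\partial X}$ is also a submersion onto $Y$, Ehresmann's fibration theorem for manifolds with boundary makes $f^{-1}(V)\to V$ a locally trivial bundle with compact fiber $F$; choosing a further partition of unity on $X$ subordinate to fibered charts $U\cong V\times W$, with $W$ a chart on $F$ of the form $\rr^{k}$ or $\rr^{k-1}\times[0,\infty)$ and with $\kk$ and $Or\left(TX\right)$ trivialized over $U$, I reduce to the case of a single $\beta$ supported in one such $U$.

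In that local situation $\kappa$ (hence $\partial\kappa$) is just a nonzero locally constant function, which --- after orienting $W$ and $\partial W$ compatibly via the conventions of \S\ref{sub:2 of 3 orientation} and \S\ref{sub:boundary orientation} and using the ``fiber-differentials-last'' normalization of \S\ref{sub:Pushforward convention} --- we may normalize to $1$. Then $f_{!}^{\kappa}$ becomes honest integration over $W$ of the top-fiber-degree component of $\beta$, the exterior derivative splits as $d=d_{V}+d_{W}$, and $d\!\int_{W}\beta=\int_{W}d\beta\pm\int_{\partial W}\beta|_{\partial W}$ is exactly the classical fiber-Stokes identity; the sign $(-1)^{\deg\beta}$ there is produced by commuting $d_{W}$ past the base differentials, and the orientation on $\partial W$ on the right is the outward-normal-first one, i.e.\ precisely $\partial\kappa$. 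This gives the lemma's formula in one chart, and the partition-of-unity reduction then yields it globally, since $d$, $f_{!}^{\kappa}$, restriction to $\partial X$, and multiplication by a bump function pulled back from $V$ all commute in the required way.

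The main obstacle is purely bookkeeping: checking that the outward-normal-at-the-beginning convention of \S\ref{sub:boundary orientation}, the ordered-direct-sum convention of \S\ref{sub:2 of 3 orientation}, and the pushforward normalization of \S\ref{sub:Pushforward convention} combine to produce exactly the sign claimed in the statement, and in particular that ``restrict to $\partial X$, then push forward'' and ``push forward, then restrict'' are compatible --- that is, that $\partial\kappa$ really is the relative orientation that \S\ref{sub:boundary orientation} applied to $X$ induces on the boundary of a fibered chart $V\times(\rr^{k-1}\times[0,\infty))$. This needs to be verified once, carefully. Finally, the equivariant version mentioned at the start of the appendix follows with no extra work: averaging over $\tb$ commutes with $d$, with pullback, and with fiber integration (as already used in the proof of Proposition \ref{prop:equiv ang form exists}), so one simply applies the projection $\pi^{\tb}$ to both sides.
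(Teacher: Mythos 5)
Your proposal is correct and is precisely the argument the paper has in mind: the paper omits the proof entirely, stating that it is "obtained by nothing more than careful book-keeping of conventions from well-known proofs," i.e.\ localization to fibered charts and reduction to the classical fiber-integration Stokes' theorem of Bott and Tu, exactly as you outline. Your identification of the one genuinely delicate point --- verifying that the outward-normal-first convention of $\S$\ref{sub:boundary orientation} combined with the fiber-differentials-last normalization produces the sign $\left(-1\right)^{\deg\omega+\deg f_{!}}$ and that $\partial\kappa$ is compatible with restriction to a boundary chart --- matches the paper's own characterization of where the work lies.
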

\[
\left(f_{!}^{\kappa}d-df_{!}^{\kappa}\right)\omega=\left(-1\right)^{\deg\omega+\deg f_{!}}\left(f|_{\partial X}\right)_{!}^{\partial\kappa}
\]
Henceforth $\deg f_{!}=\dim Y-\dim X$ denotes the degree shift of
the map $f_{!}$.

\begin{lem}
\label{lem:(Projection-Formula.)}(Projection Formula.) Let 
\[
\left(f:X\to Y,\kappa:Or\left(TX\right)\otimes f^{*}Or\left(TY\right)^{\vee}\simeq\kk^{\vee}\otimes f^{*}\lc\right)
\]
be an oriented proper submersion with boundary. Let $\lc_{1},\lc_{2}$
be two additional local systems on $Y$. Set $\kk_{+}:=f^{*}\lc_{1}\otimes\kk\otimes f^{*}\lc_{2}$,
$ $$\lc_{+}:=\lc_{1}\otimes\lc\otimes\lc_{2}$. Denote by ${\kappa_{+}:Or\left(TX\right)\otimes f^{*}Or\left(TY\right)^{\vee}\simeq\kk_{+}^{\vee}\otimes f^{*}\lc_{+}}$
the obvious isomorphism of local systems induced from $\kappa$. Then
for $\alpha_{1}\in\Omega\left(Y;\lc_{1}\right),\;\beta\in\Omega\left(X;\kk\right),\;\mbox{and }\alpha_{2}\in\Omega\left(Y;\lc_{2}\right)$
we have 
\[
f_{!}^{\kappa_{+}}\left(\left(f^{*}\alpha_{1}\right)\wedge\beta\wedge\left(f^{*}\alpha_{2}\right)\right)=\left(-1\right)^{\deg\alpha_{2}\cdot\deg f_{!}}\alpha_{1}\wedge\left(f_{!}^{\kappa}\beta\right)\wedge\alpha_{2}
\]

\end{lem}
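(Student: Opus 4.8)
The plan is to reduce the statement to the defining convention for the pushforward — that $f_{!}^{\kappa}(f^{*}\alpha\wedge\beta)=\alpha\wedge f_{!}^{\kappa}\beta$ for $\alpha$ a form with trivial coefficients — and then to extract the sign $(-1)^{\deg\alpha_{2}\cdot\deg f_{!}}$ from a short Koszul-sign computation. No new geometric input is needed beyond that convention and the conventions of $\S\ref{sub:2 of 3 orientation}$ and $\S\ref{sub:Pushforward convention}$.

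First I would note that pullback $f^{*}$, wedge product, and the fiberwise pushforward $f_{!}^{\kappa}$ all commute with restriction to an open $U\subseteq Y$ and its preimage $f^{-1}(U)\subseteq X$, and that $(f|_{f^{-1}(U)},\kappa|)$ is again an oriented proper submersion with boundary; hence the identity is local on $Y$. Choosing an open cover of $Y$ over each member of which $\lc_{1}$ and $\lc_{2}$ admit flat trivializations, and observing that $\kappa_{+}$ is by construction compatible with $\kappa$ under these trivializations — so that $\kk_{+},\lc_{+},\kappa_{+}$ become identified with $\kk,\lc,\kappa$ — I reduce to the case $\lc_{1}=\lc_{2}=\underline{\cc}$, keeping $\beta\in\Omega(X;\kk)$ and $\kappa$ arbitrary.

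In that case $\beta\wedge f^{*}\alpha_{2}\in\Omega(X;\kk)$, so the defining convention applied to $f^{*}\alpha_{1}\wedge(\beta\wedge f^{*}\alpha_{2})$ peels off $\alpha_{1}$: $f_{!}^{\kappa}\bigl(f^{*}\alpha_{1}\wedge\beta\wedge f^{*}\alpha_{2}\bigr)=\alpha_{1}\wedge f_{!}^{\kappa}\bigl(\beta\wedge f^{*}\alpha_{2}\bigr)$. For $\alpha_{2}$ I would apply the convention again with $\alpha_{2}$ on the left, $f_{!}^{\kappa}(f^{*}\alpha_{2}\wedge\beta)=\alpha_{2}\wedge f_{!}^{\kappa}\beta$, then commute $f^{*}\alpha_{2}$ past $\beta$ (sign $(-1)^{\deg\alpha_{2}\deg\beta}$) and $\alpha_{2}$ past $f_{!}^{\kappa}\beta$ (sign $(-1)^{\deg\alpha_{2}(\deg\beta+\deg f_{!})}$); the two $\deg\alpha_{2}\deg\beta$ contributions cancel mod $2$ and exactly $(-1)^{\deg\alpha_{2}\deg f_{!}}$ survives. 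Geometrically this is the Koszul sign incurred when the base form $f^{*}\alpha_{2}$ is moved past the fiber-direction part of $\beta$, whose degree is the fiber dimension $\dim X-\dim Y\equiv\deg f_{!}\pmod 2$, before the fiber is integrated out.

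The only step with genuine content is the reduction: one must check that the ``obvious'' identification $\kappa_{+}\simeq\kappa$ over a trivializing open is compatible with pullback, tensoring, and the fiber integration defining $f_{!}$, so that the local instance of the formula really is the trivial-coefficient case. Unwinding the orientation and local-system conventions to verify this is the most tedious — though not conceptually difficult — part of a complete write-up; everything else is the sign-chase above, which is why the statement can fairly be said to follow from ``careful book-keeping of conventions.''
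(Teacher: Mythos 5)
Your proof is correct, and since the paper explicitly omits the proof of this lemma (``obtained by nothing more than careful book-keeping of conventions''), your argument supplies exactly the intended content: localize on $Y$ to trivialize $\lc_{1},\lc_{2}$ and reduce $\kappa_{+}$ to $\kappa$, peel off $\alpha_{1}$ via the defining convention $f_{!}^{\kappa}(f^{*}\alpha\wedge\beta)=\alpha\wedge f_{!}^{\kappa}\beta$, and extract the sign from the Koszul commutations $(-1)^{\deg\alpha_{2}\deg\beta}\cdot(-1)^{\deg\alpha_{2}(\deg\beta+\deg f_{!})}=(-1)^{\deg\alpha_{2}\deg f_{!}}$. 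No gaps.
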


The following two lemmas will not be used, we state them for completeness
and future reference.
\begin{lem}
(Composition.) Let 
\[
\left(f:X\to Y,\kappa:Or\left(TX\right)\otimes f^{*}Or\left(TY\right){}^{\vee}\to\kk_{X}^{\vee}\otimes f^{*}\kk_{Y}\right)
\]
be an oriented proper submersion with boundary and 
\[
\left(g:Y\to Z,\kappa':Or\left(TY\right)\otimes f^{*}Or\left(TZ\right){}^{\vee}\to\kk_{Y}^{\vee}\otimes g^{*}\kk_{Z}\right)
\]
be an oriented proper submersion.

Denote by 
\[
f^{*}\kappa'\circ\kappa:Or\left(TX\right)\otimes\left(gf\right)^{*}Or\left(TZ\right)\to\kk_{X}^{\vee}\otimes\left(gf\right)^{*}\kk_{Z}
\]
the local system obtained from combining $\kappa$ and $f^{*}\kappa'$
in the obvious way. We then have 
\[
\left(gf\right)_{!}^{f^{*}\kappa'\circ\kappa}=\left(-1\right)^{\deg f_{!}\deg g_{!}}g_{!}^{\kappa'}\circ f_{!}^{\kappa}
\]

\end{lem}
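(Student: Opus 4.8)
The plan is to reduce the identity to a single local Fubini computation, with the sign coming entirely from the ordering conventions of $\S\ref{sub:2 of 3 orientation}$ and $\S\ref{sub:Pushforward convention}$. First I would observe that both $(gf)_!^{f^*\kappa'\circ\kappa}$ and $g_!^{\kappa'}\circ f_!^{\kappa}$ are maps $\Omega(X;\kk_X)\to\Omega(Z;\kk_Z)$ of degree $\deg f_!+\deg g_!=\dim Z-\dim X$, and that the target local systems agree: filtering the vertical tangent bundle of $gf$ by $0\to T_fX\to T_{gf}X\to f^*T_gY\to0$ and applying the two-out-of-three convention shows that $f^*\kappa'\circ\kappa$ is precisely the isomorphism it is declared to be (``combined in the obvious way''). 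Next, since the value of a pushforward at a point depends only on the integrand near the corresponding fibre, and properness of $f$ and $gf$ makes all the relevant sums locally finite, I may pick a partition of unity on $X$ subordinate to a cover by charts $U$ that \emph{simultaneously} trivialize $f$ and $g$: on such a $U$ there are coordinates $(z_1,\dots;\,y_1,\dots,y_a;\,x_1,\dots,x_b)$ with the $z$'s pulled back from $Z$, the $y$'s fibre coordinates of $g$, and the $x$'s fibre coordinates of $f$, where $a=\dim Y-\dim Z=-\deg g_!$ and $b=\dim X-\dim Y=-\deg f_!$ (if $\partial X\neq\varnothing$, take $x_1\geq 0$; nothing changes). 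By linearity it then suffices to treat a form $\omega$ compactly supported in one such chart.

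On the chart I would expand $\omega$ in coordinate monomials and note that only the monomial $\omega_0=h\cdot(\text{a pullback of a }z\text{-form})\wedge dy_1\wedge\cdots\wedge dy_a\wedge dx_1\wedge\cdots\wedge dx_b$ contributes, since a fibre integral of a form that is not of top fibre degree vanishes. For the right-hand side I would view $\omega_0$ as $f^*((z\text{-form})\wedge dy_1\wedge\cdots\wedge dy_a)\wedge(h\,dx_1\wedge\cdots\wedge dx_b)$, so the pushforward convention gives $f_!^{\kappa}\omega_0=(z\text{-form})\wedge dy_1\wedge\cdots\wedge dy_a\cdot\bigl(\int_{F_f}h\,dx_1\cdots dx_b\bigr)$, a form on $Y$ of exactly the shape to which $g_!^{\kappa'}$ applies, whence $g_!^{\kappa'}f_!^{\kappa}\omega_0=(z\text{-form})\cdot\int_{F_g}\!\bigl(\int_{F_f}h\bigr)$. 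For the left-hand side, $(gf)_!^{f^*\kappa'\circ\kappa}\omega_0=(z\text{-form})\wedge\int_{F_{gf}}h\,dy_1\wedge\cdots\wedge dy_a\wedge dx_1\wedge\cdots\wedge dx_b$, where $F_{gf}$ is oriented, by $\S\ref{sub:2 of 3 orientation}$ applied to $0\to TF_f\to T_{gf}X\to f^*TF_g\to 0$, by $dx_1\wedge\cdots\wedge dx_b\wedge dy_1\wedge\cdots\wedge dy_a$, i.e. with the $F_f$-differentials first. Reordering $dy_1\wedge\cdots\wedge dx_b$ into $dx_1\wedge\cdots\wedge dy_a$ costs $(-1)^{ab}$, and then ordinary Fubini for the iterated integral of the function $h$ gives $\int_{F_{gf}}=\int_{F_g}\int_{F_f}$ with no further sign. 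Hence $(gf)_!\omega_0=(-1)^{ab}\,g_!f_!\omega_0$, and $ab=(\deg f_!)(\deg g_!)$, which is the claim.

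The main obstacle is purely the sign bookkeeping: establishing that the composite fibre $F_{gf}$ carries its $F_f$-factor \emph{before} its $F_g$-factor (the reverse of the order in which the corresponding differentials appear once one applies the ``fibre coordinates last'' rule to $f$ and $g$ individually), and checking this is consistent with the prescribed meaning of $f^*\kappa'\circ\kappa$. Everything else — reduction to a chart, vanishing of non-top-degree monomials, convergence of the fibre integrals by properness and compact support, and Fubini — is routine, and indeed parallels the analogous statements (the projection formula, Lemma \ref{lem:(Projection-Formula.)}, and Stokes, Lemma \ref{lem:Stokes'-Theorem.}) that the paper already takes for granted.
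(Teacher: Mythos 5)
Your argument is correct. The paper gives no proof of this lemma at all --- it is stated ``for completeness and future reference,'' and the appendix declares all such proofs omitted as ``careful book-keeping of conventions'' --- so the only thing to check is your bookkeeping, which checks out: you correctly isolate the one convention-dependent point, namely that $f^{*}\kappa'\circ\kappa$ orients the vertical bundle of $gf$ via the ordered sequence $0\to T_{f}X\to T_{gf}X\to f^{*}T_{g}Y\to0$ (fibre of $f$ first), whereas the iterated pushforward presents the fibre differentials in the opposite order, producing exactly the Koszul sign $\left(-1\right)^{ab}=\left(-1\right)^{\deg f_{!}\deg g_{!}}$.
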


\begin{lem}
(Push-pull property.) Let 
\[
\xymatrix{X'\ar[r]^{u'}\ar[d]_{f'} & X\ar[d]^{f}\\
Y'\ar[r]_{u} & Y
}
\]
be a Cartesian square, where $\left(f,\kappa\right)$ is an oriented
proper submersion with boundary. It follows that $\partial X'=Y'\times_{Y}\partial X$,
and $f'$ is also an oriented proper submersion with boundary; indeed,
since the fibers of $f'$ are canonically isomorphic to the fibers
of $f$ we find that there's a canonical isomorphism of local systems
\begin{equation}
Or\left(TX'\right)\otimes f'^{*}Or\left(TY'\right)^{\vee}\simeq u'^{*}\left(Or\left(TX\right)\otimes f^{*}Or\left(TY\right)^{\vee}\right),\label{eq:fiber orientation isomorphism}
\end{equation}
and we denote by 
\[
\kappa^{u}:Or\left(TX'\right)\otimes f'^{*}Or\left(TY'\right)^{\vee}\to u'^{*}\kk^{\vee}\otimes f'^{*}\left(u^{*}\lc\right)
\]
the local system isomorphism obtained from combining (\ref{eq:fiber orientation isomorphism})
and $\kappa$ in the obvious way. 
\[
u^{*}\circ f_{!}^{\kappa}=\left(f'\right){}_{!}^{\kappa^{u}}\circ\left(u'\right){}^{*}.
\]

\end{lem}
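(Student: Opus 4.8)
The plan is to reduce the identity to the model case of a trivial fibre bundle, where it is immediate from the definition of fibre integration, and then to patch. First I would note that the asserted equality of forms on $Y'$ may be verified locally, since fibre integration commutes with restriction to opens: the restriction of $f_{!}^{\kappa}\omega$ to an open $V\subseteq Y$ depends only on $\omega|_{f^{-1}\left(V\right)}$, and similarly for $\left(f'\right)_{!}^{\kappa^{u}}$. So fix $y'\in Y'$, put $y=u\left(y'\right)$, and exploit that $\left(f,\kappa\right)$ is an oriented proper submersion \emph{with boundary}: by Ehresmann's fibration theorem — which applies to manifolds with boundary precisely because $f|_{\partial X}$ is also a surjective submersion onto $Y$ — there is an open $V\ni y$ and a diffeomorphism $f^{-1}\left(V\right)\simeq V\times F$ over $V$, with $F$ a compact manifold with boundary (the typical fibre) and $\partial X\cap f^{-1}\left(V\right)$ carried to $V\times\partial F$. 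Shrinking, I would take an open $V'\subseteq u^{-1}\left(V\right)$; restricting the Cartesian square over $V'$ yields the model square with $\left(f'\right)^{-1}\left(V'\right)\simeq V'\times F$, with $u'$ restricting to $\left(u|_{V'}\right)\times\mathrm{id}_{F}$, and with $f'$ the projection onto $V'$. It then suffices to prove the formula for this model square.

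For the model square I would argue as follows. By the convention of $\S$\ref{sub:Pushforward convention}, both $f_{!}^{\kappa}$ and $\left(f'\right)_{!}^{\kappa^{u}}$ amount to integrating out the $F$-factor against the orientation of $F$ that $\kappa$ (resp. $\kappa^{u}$) determines through the ordered decomposition $TX|_{f^{-1}\left(V\right)}\simeq pr_{V}^{*}TV\oplus pr_{F}^{*}TF$ together with $\S$\ref{sub:2 of 3 orientation}; and under the canonical identification \ref{eq:fiber orientation isomorphism} these two orientations of $F$ coincide, since $\kappa^{u}$ is obtained from $\kappa$ by the very same decomposition now read on $V'\times F$. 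Working on opens small enough that the coefficient local system $\kk$ is trivial, I would decompose a general form as $\omega=\sum_{i}pr_{V}^{*}\alpha_{i}\wedge pr_{F}^{*}\gamma_{i}$. Fibre integration then annihilates the summands with $\deg\gamma_{i}<\dim F$ and sends the remaining ones to $\pm\,\alpha_{i}\cdot\int_{F}\gamma_{i}$, the sign depending only on $\deg\alpha_{i}$ and fixed once and for all by the ``fibre differentials come last'' convention. Since $\left(u'\right)^{*}\omega=\sum_{i}pr_{V'}^{*}\left(u^{*}\alpha_{i}\right)\wedge pr_{F}^{*}\gamma_{i}$ — pullback along $u'=\left(u|_{V'}\right)\times\mathrm{id}_{F}$ introducing no permutation of differentials — applying $\left(f'\right)_{!}^{\kappa^{u}}$ would give the same expression with $\alpha_{i}$ replaced by $u^{*}\alpha_{i}$, with identical signs and identical $\int_{F}\gamma_{i}$, that is, exactly $u^{*}\!\left(f_{!}^{\kappa}\omega\right)$. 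No base-change sign appears, as expected.

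The step I expect to be the main obstacle is the orientation bookkeeping: one must check that the trivialisation $f^{-1}\left(V\right)\simeq V\times F$ can be chosen so that the square of local system isomorphisms relating $Or\left(TX\right)\otimes f^{*}Or\left(TY\right)^{\vee}$, $Or\left(TX'\right)\otimes\left(f'\right)^{*}Or\left(TY'\right)^{\vee}$, $\kappa$ and $\kappa^{u}$ is compatible with the identifications coming from the trivialisation (including the boundary decomposition of $\S$\ref{sub:boundary orientation} where relevant), and that the universal degree-sign produced by the convention of $\S$\ref{sub:Pushforward convention} is literally the same on both sides. This is routine but error-prone; however it is of exactly the same nature as — and no harder than — the sign verifications already suppressed for Stokes' Theorem \ref{lem:Stokes'-Theorem.} and the Projection Formula \ref{lem:(Projection-Formula.)}, so I would likewise suppress the detailed check here. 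Finally, the equivariant version claimed in the appendix would follow verbatim, replacing ``form'' by ``$\tb$-invariant form'' throughout and noting that $F$, being a fibre of the equivariant submersion $f$, inherits a $\tb$-action compatible with all the identifications used above.
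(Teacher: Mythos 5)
Your proposal is correct in outline. Be aware that the paper does not actually prove this lemma: it appears in the appendix among results stated ``for completeness and future reference,'' with the blanket remark that the proofs are omitted because they amount to careful book-keeping of conventions applied to well-known arguments. So there is no argument in the paper to compare against, and your strategy --- localize on the base, trivialize the proper submersion (with boundary) via Ehresmann, and verify the identity in the product model where both sides are ``integrate out the fibre variables'' composed with the substitution along $u$ --- is exactly the standard proof being alluded to. Two minor points. First, a general form on $V\times F$ is not a finite sum of decomposables $pr_{V}^{*}\alpha_{i}\wedge pr_{F}^{*}\gamma_{i}$, since the coefficient functions depend on both factors; the model computation should instead be done in local coordinates, writing $\omega=\sum_{I,J}g_{IJ}\left(v,t\right)\,dv^{I}\wedge dt^{J}$ and observing that integrating out the $t$-variables commutes with substituting $v=u\left(v'\right)$ and pulling back the $dv^{I}$. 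This changes nothing essential, and the absence of any extra sign is consistent with the ``fibre differentials last'' convention of the appendix. Second, the orientation and sign bookkeeping you defer (compatibility of the trivialization with the isomorphism (\ref{eq:fiber orientation isomorphism}) defining $\kappa^{u}$) is precisely what the paper itself suppresses for this lemma and for Stokes' Theorem and the Projection Formula, so deferring it is consistent with the intended level of detail.
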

\bibliographystyle{amsabbrvc}
\bibliography{localization}

\end{document}